\newtheorem{theorem}{Theorem}[section]
\newtheorem{lemma}[theorem]{Lemma}
\newtheorem{corollary}[theorem]{Corollary}
\newtheorem{example}[theorem]{Example}
\newtheorem{proposition}[theorem]{Proposition}
\newtheorem{remark}[theorem]{Remark}
\newtheorem{definition}[theorem]{Definition}
\newcommand{\ncom}{\newcommand}
\ncom{\lrar}{\longrightarrow}
\ncom{\rar}{\rightarrow}
\ncom{\ov}{\overline}
\ncom{\m}{\mbox}
\ncom{\sta}{\stackrel}
\ncom{\comx}{{\mathbb C}}
\ncom{\A}{{\mathbb A}}
\ncom{\Cl}{{\mathbb C}}
\ncom{\Z}{{\mathbb Z}}
\ncom{\Q}{{\mathbb Q}}
\ncom{\R}{{\mathbb R}}
\ncom{\G}{{\mathbb G}}
\ncom{\al}{\alpha}
\ncom{\p}{{\mathbb P}}
\ncom{\E}{{\mathbb E}}
\ncom{\N}{{\mathbb N}}
\ncom{\K}{{\mathbb K}}
\ncom{\X}{{\mathbb X}}
\ncom{\f}{\frac}
\ncom{\cA}{{\mathcal A}}
\ncom{\cB}{{\mathcal B}}
\ncom{\cX}{{\mathcal X}}
\ncom{\cO}{{\mathcal O}}
\ncom{\cW}{{\mathcal W}}
\ncom{\cP}{{\mathcal P}}
\ncom{\cS}{{\mathcal S}}
\ncom{\cM}{{\mathcal M}}
\ncom{\cC}{{\mathcal C}}
\ncom{\cT}{{\mathcal T}}
\ncom{\cF}{{\mathcal F}}
\ncom{\cN}{{\mathcal N}}
\ncom{\cJ}{{\mathcal J}}
\ncom{\cK}{{\mathcal K}}
\ncom{\cV}{{\mathcal V}}
\ncom{\cZ}{{\mathcal Z}}
\ncom{\cU}{{\mathcal U}}
\ncom{\cSU}{{\mathcal S \mathcal U}}
\ncom{\cG}{{\mathcal G}}
\ncom{\cQ}{{\mathcal Q}}
\ncom{\cY}{{\mathcal Y}}
\ncom{\cE}{{\mathcal E}}
\ncom{\what}{\widehat}
\ncom{\delbar}{\overline{\partial}}
\ncom{\eop}{{\hfill $\Box$}}
\def \rmG{\rm G}
\def \rmX{\rm X}
\def \cX{\mathcal X}
\def \rmC{\rm C}
\def \cY{\mathcal Y}
\def \rmS{\rm S}
\def \rmH{\rm H}
\def \oK{\rm K}
\def \B{\mathcal B}
\def \BH{\rm BH}
\def \BB{\rm BB}
\def \eps{\epsilon}
\def \colimn{\underset {n \rightarrow \infty}  {\hbox {lim}}}
\def \colimm{\underset {m \rightarrow \infty}  {\hbox {lim}}}
\def \codim{\rm {codim}}
\def \cL{\mathcal L}
\def \H{\rm H}
\def \CH{\rm {CH}}
\def \Pic{\rm {Pic}}
\def \M{\rm M}
\def \bPic{\mathbf {Pic}}
\def \rmI{\rm I}
\def \rmp{\rm p}
\def \rmR{\rm R}
\def \NS{\rm {NS}}
\def \Q{\mathbb Q}
\def \ra{\rightarrow}
\def \Br{\rm {Br}}
\def \O{\mathcal O}
\def \res{respectively}
\def \cL{\mathcal L}
\def \SL{\rm SL}
\def \rmT{\rm T}
\def \rmX{\rm X}
\def \rmY{\rm Y}
\def \rmp{\rm p}
\def \rmU{\rm U}
\def \rmN{\rm N}
\def \rmE{\rm E}
\def \rmW{\rm W}
\def \rmS{\rm S}
\def \rmZ{\rm Z}
\def \EG{\rm EG}
\def \BG{\rm BG}
\def \BT{\rm BT}
\def \ET{\rm ET}
\def \rmB{\rm B}
\def \H{\rm H}
\def \rmM{\rm M}
\def \rmP{\rm P}
\def \rmG{\rm G}
\def \rmL{\rm L}
\def \rmQ{\rm Q}
\def \bd{\mathbf d}
\def \bZ{\mathbb Z}
\def \bF{\mathbf F}
\def \bN{\mathbb N}
\def \bG{\mathbb G}
\def \oSpec{\rm {Spec}}
\def \rmV{\rm V}
\newcommand{\Gr}{\operatorname{Gr}}
\newcommand{\GL}{\operatorname{GL}}
\newcommand{\bfd}{{\bf d}}
\newcommand{\bfdim}{{\bf dim}}
\newcommand{\bfi}{{\bf i}}
\newcommand{\bfI}{{\bf I}}
\newcommand{\Aut}{\operatorname{Aut}}
\newcommand{\End}{\operatorname{End}}
\newcommand{\id}{\operatorname{id}}
\def \Hom{{\mathcal H}om}
\def \p{\rm p}
\def \rmA{\rm A}
\begin{document}
\baselineskip=16pt

\title[Brauer groups of algebraic stacks and GIT-quotients]{Brauer groups of algebraic stacks and GIT-quotients}


\author[J. N. Iyer]{Jaya NN  Iyer}

\address{The Institute of Mathematical Sciences, CIT
Campus, Taramani, Chennai 600113, India}
\email{jniyer@imsc.res.in}

\author[R. Joshua]{Roy Joshua}
\address{Department of Mathematics, The Ohio State University, Columbus, Ohio, 43210, USA}
\email{joshua.1@math.osu.edu}
\thanks{2010 AMS Subject Classification: 14C25, 14F20, 14F22, 14L30, 14D23.\\ \indent Keywords: Brauer group, Algebraic stacks, GIT quotient.}

\begin{abstract}
In this paper we consider the Brauer groups of algebraic stacks and GIT quotients: the only algebraic stacks we consider
in this paper are quotient stacks $[\rmX/\rmG]$, where $\rmX$ is a smooth scheme of finite type over a field $k$, 
and $\rmG$ is a linear algebraic group over $k$ and acting on $\rmX$, as well as various moduli stacks of principal $\rmG$-bundles
on a smooth projective curve $\rmX$, associated to a reductive group $\rmG$. We also consider the Brauer groups of the corresponding coarse 
moduli spaces, which most often identify with the corresponding GIT-quotients. One conclusion that we seem to draw then
is that the Brauer groups (or their $\ell$-primary torsion parts, for a fixed prime $\ell$ different from $char (k)$)
of the corresponding stacks and coarse moduli spaces depend strongly on the Brauer group of the given scheme $\rmX$.

\end{abstract}
\maketitle

\setcounter{tocdepth}{1}
\tableofcontents


\section{Introduction and the Main Results}

The paper originated in an effort by the authors to study the Brauer groups of GIT quotients associated to actions of
reductive groups. While working on various examples,  we realized that it is preferable to adopt a more general framework and goal of studying also the
Brauer groups of various algebraic stacks that show up in this context.
\vskip .1cm

Though, for the most part, we work over a fixed separably closed field $k$ of arbitrary characteristic, there are indeed
some of our results that do not require this restriction and hold over any base field. Therefore, we will adopt the
following framework for considering the Brauer groups. We will start with a base field $k$ of arbitrary characteristic. Let $\ell$ denote a 
fixed prime different from $char (k)$ and let $\rmX$ denote a smooth  scheme of finite type over $k$: {\it we will always restrict to such schemes}. Then one begins with the {\it Kummer sequence}
\begin{equation}
 \label{Kummer.seq.0}
1 \ra \mu_{\ell^n}(1) \ra {\mathbb G}_m {\overset {\ell^n} \ra} {\mathbb G}_m \ra 1,
\end{equation}
which holds on the (small)  \'etale site ${\rmX}_{et}$ of ${\rmX}$, whenever $\ell$ is invertible in $k$. (See \cite[section 3]{Gr} or
\cite[p. 66]{Mi}.) Taking \'etale cohomology, we obtain corresponding long-exact sequence:
\begin{equation}
 \label{Kummer.seq.1}
\rightarrow \H^1_{et}({\rmX},{\mathbb G}_m) {\overset {\ell^n} \ra } \H^1_{et}({\rmX}, {\mathbb G}_m) \ra  \H^2_{et}({\rmX},\mu_{\ell^n}(1)) \rightarrow \H^2_{et}({\rmX}, {\mathbb G}_m) \rightarrow H^2_{et}({\rmX}, {\mathbb G}_m) \rightarrow \cdots, 
\end{equation}
which holds on the \'etale site when $\ell$ is invertible in $k$. 
\begin{definition}
 \label{coh.Br.grp.1}
The \textit{cohomological Brauer group} $\Br({\rmX})$ is the torsion subgroup of the cohomology group $\H^2_{et}({\rmX}, {\mathbb G}_m)$. In other words,
 $\Br({\rmX})\,=\,\H^2_{et}({\rmX}, {\mathbb G}_m)_{tors}$.
\end{definition}
By Hilbert's Theorem 90, and since $\rmX$ is assumed to be smooth, we obtain the isomorphisms: 
\begin{equation}
     \label{Hilb.90}
\Pic({\rmX}) \cong {\rm CH}^1(\rmX)  \cong \H^1_{et}({\rmX}, {\mathbb G}_m) \cong \H^{2, 1}_{\M}(X, {\mathbb Z}),
\end{equation}
where $\H^{2, 1}_{\M}(X, {\mathbb Z})$ denotes motivic cohomology (in degree $2$ and weight $1$) and ${\rm CH}^1$ denotes the
Chow group in codimension $1$.
Then one also obtains the short-exact sequence:
\begin{equation}
 \label{Kummer.seq.2}
0 \ra \Pic({\rmX})/\ell^n\cong \NS({\rmX})/\ell^n \ra \H^2_{et}({\rmX}, \mu_{\ell^n}(1)) \ra \Br({\rmX})_{\ell^n} \ra 0 ,
\end{equation}
where the map $\Pic({\rmX})/\ell^n = \H^{2,1}_{\M}(\rmX, {\mathbb Z}/\ell^n) \ra \H^2_{et}({\rmX}, \mu_{\ell^n}(1))$ is the
cycle map, and therefore, $\Br({\rmX})_{\ell^n}$ identifies with the cokernel of the cycle map. Thus it follows that
$\Br({\rmX})_{\ell^n}$ is trivial if and only if the above cycle map is surjective:
our approach to the Brauer group
adopted in this paper is to consider the above cycle map from motivic cohomology to etale cohomology, and involves a combination of motivic and
 \'etale cohomology techniques.

\vskip .2cm
Let $\rmG$ denote a not-necessarily connected linear algebraic group, defined over $k$, and acting on 
a quasi-projective scheme $\rmX$. 
Next we recall the framework of Borel-style equivariant \'etale cohomology, and Borel-style equivariant motivic cohomology.
For this we form
an ind-scheme $\{\EG ^{gm,m}{\underset {\rmG} \times}\rmX|m\}$ and then take its \'etale  cohomology, and also its
motivic cohomology when $\rmX$ is also assumed to be smooth.
One may consult \cite{Tot99}, \cite{MV99}, and also section 2 for more details. Here $\BG^{gm,m}$ is a finite dimensional approximation to
the classifying space of the linear algebraic group $\rmG$, and $\EG^{gm,m}$ denotes the universal principal $\rmG$-bundle over
$\BG^{gm,m}$. In the terminology of Definition ~\ref{defn:Adm-Gad}, $\EG^{gm,m} =\rmU_m$ and $\BG^{gm,m}=\rmU_m/\rmG$. We also assume that 
such a $\BG^{gm,m}$ exists, for every $m \ge 0$, as a quasi-projective scheme over the given base field. There are standard arguments
to prove that that the cohomology of the ind-schemes $\{\BG^{gm,m}|m \ge 0\}$, $\{\EG^{gm,m}\times_{\rmG}\rmX|m \ge 0\}$ are independent of the choice of the admissible gadgets $\{\rmU_m|m \ge 0\}$
that enter into their definition: see, for example, \cite[Appendix B]{CJ19}.
\vskip .1cm
{\it Let $\ell$ denote a fixed prime different from $char(k)$.}
Then we let $\rmH^{*, \bullet}_{\rmG, \M}(X, Z/\ell^n)$  denote the motivic 
cohomology of $\{\EG ^{gm,m}{\underset {\rmG} \times}\rmX|m\}$  defined as the homotopy inverse limit of the motivic
cohomology of the finite dimensional approximations $\EG ^{gm,m}{\underset {\rmG} \times}\rmX$, that is, defined
by the usual Milnor exact sequence relating $lim^1$ and $lim$ of the motivic   hypercohomology of the above finite 
dimensional approximations. (When $*=2i$ and $\bullet =i$, for a non-negative integer $i$, these identify with the usual (equivariant) Chow groups.) $\rmH^{*}_{\rmG, et}(X, \mu_{\ell^n}(\bullet))$
is defined similarly.

\vskip .2cm
Recall that for each fixed integer $i\ge 0$, one obtains the isomorphisms (for $m$ chosen, depending on $i$):
\[ \rmH^{2i,i}_{\rmG, \M}(\rmX, Z/\ell^n ) \cong \rmH^{2i,i}_{\M}(\EG^{gm,m}{\underset {\rmG} \times}\rmX, Z/\ell^n), m>>0 \mbox { and } \rmX \mbox { smooth}, \mbox{ and} \]
\[ \rmH^{2i,i}_{\rmG, et}(\rmX, \mu_{\ell^n} ) \cong \rmH^{2i}_{et}(\EG^{gm,m}{\underset {\rmG} \times}\rmX, \mu_{\ell^n}(i)), m>>0. \]
These show that one may define the {\it $\rmG$-equivariant Brauer group} of a $\rmG$-scheme $\rmX$ as follows:
\begin{definition}
 \label{equiv. Brauer.grp}
 $\Br_{\rmG}(\rmX) = \rmH^2_{et}(\EG^{gm,m}{\underset {\rmG} \times} \rmX, {\mathbb G}_m) _{tors}$, for $m>>0$, where the subscript $tors$ 
 denotes the torsion subgroup. \footnote{Here we remind the reader that, despite the similarity in appearance, the above equivariant Brauer
  groups are quite different from what are called, {\it invariant Brauer groups}: see \cite{Cao}.}
\end{definition}
Moreover, we obtain from the Kummer-sequence the short-exact sequence:
\begin{equation}
 \label{Kummer.seq.2}
0 \ra \Pic(\EG^{gm,m}{\underset {\rmG} \times}{\rmX})/\ell^n \ra \rmH^2_{et}(\EG^{gm,m}{\underset {\rmG} \times}{\rmX}, \mu_{\ell^n}(1)) \ra \Br(\EG^{gm,m}{\underset {\rmG} \times}{\rmX})_{\ell^n} = \Br_{\rmG}({\rmX})_{\ell^n}\ra 0 \,  \mbox{ and}
\end{equation} 
\vskip .1cm \noindent
where 
\vskip .1cm
$\Pic(\EG^{gm,m}{\underset {\rmG} \times}{\rmX})/\ell^n = coker (\Pic(\EG^{gm,m}{\underset {\rmG} \times}{\rmX}) {\overset {\ell^n} \ra} \Pic(\EG^{gm,m}{\underset {\rmG} \times}{\rmX})), \,$
\vskip .1cm
$\Br_{\rmG}({\rmX})_{\ell^n}$ = the $\ell^n$-torsion part of $\Br_{\rmG}({\rmX})$.
\vskip .1cm
The comparison theorem \cite[Theorem 1.6]{J20} shows that $\rmH^{*}_{\rmG, et}(X, \mu_{\ell^n}(\bullet))$ identifies with
$\rmH_{smt}^*([\rmX/\rmG], \mu_{\ell^n})$, which denotes the cohomology of the quotient stack $[\rmX/\rmG]$ computed on
the smooth site: see Proposition ~\ref{coh.comp} below for further details. This motivates the the following definition.
\begin{definition}
 \label{Br.grp.stacks}
 Given an Artin stack $S$ of finite type over $k$, we define its Brauer group to be $\rmH_{smt}^2(S, {\mathbb G}_m)_{tors}$,
  where $\rmH_{smt}^2(S, {\mathbb G}_m)$ denotes cohomology computed on the smooth site, and the subscript $tors$ denotes its torsion subgroup. We denote
 this by $\Br(S)$. For a fixed prime $\ell \ne char (k)$, we let $\Br(S)_{\ell^{n}}$ denote the $\ell^n$-torsion
  part of $\Br(S)$.
\end{definition}
Then our first result is the following, which shows the Brauer group of a quotient stack $[\rmX/\rmG]$, so defined, identifies with
the $\rmG$-equivariant Brauer group defined in Definition ~\ref{equiv. Brauer.grp}.
\begin{theorem}
\label{Br.grp.quot.stacks} Assume that 
$\rmX$ is a smooth scheme of finite type over the base field $k$, and  provided with an
  action by the linear algebraic group $\rmG$. Then, assuming the above terminology,
 \[\Br([\rmX/\rmG])_{\ell ^n} \cong \Br_{\rmG}({\rmX})_{\ell^n}.\]
 Therefore, $\Br_{\rmG}({\rmX})_{\ell^n}$ is intrinsic to the quotient stack $[\rmX/\rmG]$.
\end{theorem}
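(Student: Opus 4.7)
The plan is to apply the Kummer sequence on both sides of the desired comparison and then invoke the five lemma.

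First I would apply the Kummer sequence on the smooth site of the quotient stack $[\rmX/\rmG]$, which is valid since $\ell$ is invertible in $k$. Because line bundles on an Artin stack are classified by $\bG_m$-torsors on the smooth site, one has $\rmH^1_{smt}([\rmX/\rmG], \bG_m) = \Pic([\rmX/\rmG])$, and truncating the resulting long exact sequence yields the short exact sequence
\[
0 \to \Pic([\rmX/\rmG])/\ell^n \to \rmH^2_{smt}([\rmX/\rmG], \mu_{\ell^n}(1)) \to \Br([\rmX/\rmG])_{\ell^n} \to 0.
\]
On the other side, for $m \gg 0$ the Kummer sequence on the smooth scheme $\EG^{gm,m}\times_{\rmG}\rmX$ produces exactly the short exact sequence displayed just before Definition~\ref{Br.grp.stacks}, whose rightmost term is $\Br_{\rmG}(\rmX)_{\ell^n}$.

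Next, the comparison theorem [J20, Theorem 1.6] already recalled in the excerpt identifies the middle terms: for $m \gg 0$,
\[
\rmH^2_{smt}([\rmX/\rmG], \mu_{\ell^n}(1)) \;\cong\; \rmH^2_{et}(\EG^{gm,m}\times_{\rmG}\rmX, \mu_{\ell^n}(1)).
\]
For the leftmost terms I would invoke the Totaro / Edidin-Graham stabilization: for $m$ large enough, so that the complement of $\EG^{gm,m}$ in the ambient representation has codimension at least $2$, pullback along the smooth atlas induces isomorphisms $\Pic([\rmX/\rmG]) \cong \Pic_{\rmG}(\rmX) \cong \Pic(\EG^{gm,m}\times_{\rmG}\rmX)$, and hence an isomorphism on the $\ell^n$-cokernels. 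Combining these identifications with the two short exact sequences and applying the five lemma then delivers $\Br([\rmX/\rmG])_{\ell^n} \cong \Br_{\rmG}(\rmX)_{\ell^n}$; the intrinsicality claim is an immediate consequence, since the left-hand side manifestly depends only on the stack $[\rmX/\rmG]$.

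The main obstacle is the compatibility check: one must verify that the cycle maps on the two sides fit into a genuinely commutative diagram under the two identifications above. This reduces to naturality — the comparison of [J20, Theorem 1.6] is induced by pullback along the smooth atlas $\EG^{gm,m}\times_{\rmG}\rmX \to [\rmX/\rmG]$, the Edidin-Graham isomorphism on Picard groups is induced by the same pullback, and the Kummer coboundary is natural for morphisms of ringed topoi. Once this routine naturality is in place, the five lemma finishes the argument.
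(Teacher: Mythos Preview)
Your proposal is correct and follows essentially the same strategy as the paper: both sides are compared via the Kummer sequence, with the $\rmH^2(\,\cdot\,,\mu_{\ell^n})$ terms identified by the comparison theorem \cite[Theorem 1.6]{J20} and the $\rmH^1(\,\cdot\,,\bG_m)$ (Picard) terms identified separately, after which the Brauer $\ell^n$-torsion drops out as the common cokernel. The only difference is packaging: the paper routes everything through the simplicial Borel construction $\EG\times_{\rmG}\rmX$ as an intermediate object and devotes Proposition~\ref{comp.isoms}(i) to proving the Picard comparison via Leray spectral sequences and localization, whereas you cite the Edidin--Graham/Totaro stabilization directly; the naturality check you flag is exactly what the paper handles by working with the explicit zig-zag $\EG\times_{\rmG}\rmX \leftarrow \EG\times_{\rmG}(\EG^{gm,m}\times\rmX) \rightarrow \EG^{gm,m}\times_{\rmG}\rmX$.
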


\vskip .2cm
{\it For the rest of the paper, we need to restrict to the case where the base field $k$ is separably closed.}
Our next main result is the following theorem and its corollary.
 \begin{theorem} 
 \label{nonequiv.to.equiv}
 Assume the base field $k$ is separably closed. Suppose $\rmX$ is a smooth scheme,  provided
 with an action by the {\it connected} linear algebraic group $\rmG$. Then the induced map 
 \[\rmH^{2,1}_{\rmG, \M}(\rmX, Z/\ell^n) \ra \rmH^2_{\rmG, et}(\rmX, \mu_{\ell^n}(1))\]
 is also an isomorphism  under any one of the following hypotheses:
 \begin{enumerate}[\rm(i)]
 \item The group $\rmG$ is a torus
  \item $k$ is perfect (which, in view of the assumption that it is separably closed, implies it is also algebraically closed). 
  The group $\rmG$ is {\it special} in the sense of Grothendieck (see \cite{Ch}). If $\rmW$  denotes the Weyl group associated to a maximal torus in $\rmG$,
  $|\rmW|$ (which is the order of $\rmW$) is relatively prime to $\ell$, and  the cycle map  induces an isomorphism
 \[\rmH^{2, 1}_{\M}(\rmX, Z/\ell^n) \ra \rmH^2_{et}(\rmX, \mu_{\ell^n}(1)),\]
  \item $\rmX = \rmG/\rmH$, for a closed connected linear algebraic subgroup $\rmH$ of $\rmG$, so that the torsion index of the group $\rmH$ is prime to $\ell$, where the torsion index of linear algebraic groups
  is discussed in \cite{Gr58}, \cite[section 1]{Tot05}.
 \end{enumerate}
\end{theorem}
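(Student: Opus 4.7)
In each case, translate the claimed isomorphism into a vanishing statement as follows. Fix $m\gg 0$ so that $\rmY_m := \EG^{gm,m}\underset{\rmG}{\times}\rmX$ is a smooth scheme of finite type computing both sides. By (\ref{Hilb.90}), $\rmH^{2,1}_\M(\rmY_m,\Z)=\Pic(\rmY_m)$; since $\rmH^{3,1}_\M(\rmY_m,\Z)=\CH^1(\rmY_m,-1)=0$, the Bockstein degenerates to $\rmH^{2,1}_\M(\rmY_m,\Z/\ell^n)\cong \Pic(\rmY_m)/\ell^n$. Combined with the Kummer sequence \eqref{Kummer.seq.2} for $\rmY_m$, the cycle map is identified with the canonical injection $\Pic(\rmY_m)/\ell^n\hookrightarrow \rmH^2_{et}(\rmY_m,\mu_{\ell^n}(1))$, whose cokernel is $\Br(\rmY_m)_{\ell^n}=\Br_\rmG(\rmX)_{\ell^n}$. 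Hence each case reduces to verifying that the equivariant Brauer group on $\ell^n$-torsion vanishes.

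\textbf{Case (i), torus.} A torus $\rmT$ is \emph{special} in the sense of Chevalley-Grothendieck, so the principal $\rmT$-bundle $p:\EG^{gm,m}\times\rmX\to \rmY_m$ is Zariski-locally trivial. Iterating on the rank reduces to $\rmT=\rmG_m$, where $p$ is the complement of the zero section in a line bundle on $\rmY_m$. The \'etale Gysin exact sequence relates $\rmH^*_{et}(\rmY_m,\mu_{\ell^n})$ with $\rmH^*_{et}(\EG^{gm,m}\times\rmX,\mu_{\ell^n})$ and a Tate-twist shift of lower-degree cohomology of $\rmY_m$; an analogous sequence exists in motivic cohomology, and the two fit in a commutative ladder under the cycle map. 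Because $\EG^{gm,m}$ is an open in an affine space whose complement has codimension tending to $\infty$, the cohomology of $\EG^{gm,m}\times\rmX$ stabilizes for $m\gg 0$ to that of $\rmX$ in any fixed range, while the base contribution comes from $\BT^{gm,m}=(\p^m)^r$, where Chern classes of tautological line bundles and the projective bundle formula make the cycle map an iso. A diagram chase in bidegree $(2,1)$ then yields $\Br(\rmY_m)_{\ell^n}=0$.

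\textbf{Case (ii), special $\rmG$.} Let $\rmT\subset\rmG$ be a maximal torus with Weyl group $\rmW$. The quotient map $\EG^{gm,m}/\rmT\to\BG^{gm,m}$ is finite \'etale of degree $|\rmW|$, and since $|\rmW|$ is prime to $\ell$, the transfer along it (compatible in motivic and \'etale cohomology) identifies the $\rmG$-equivariant cohomology of $\rmX$ with the $\rmW$-invariants of its $\rmT$-equivariant cohomology. The cycle map is $\rmW$-equivariant, so applying $(-)^\rmW$ to the isomorphism furnished by case (i) -- which takes as input the hypothesis that the non-equivariant cycle map for $\rmX$ is an iso -- concludes.

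\textbf{Case (iii) and main obstacle.} For (iii), the Borel construction identifies $\EG^{gm,m}\underset{\rmG}{\times}(\rmG/\rmH)=\EG^{gm,m}/\rmH=\BH^{gm,m}$, reducing the $\rmG$-equivariant cohomology of $\rmG/\rmH$ to the absolute cohomology of $\BH$. The hypothesis that the torsion index of $\rmH$ is prime to $\ell$, together with the results of \cite{Gr58,Tot05}, gives an iso $\rmH^*(\BH,\Z/\ell^n)\cong \rmH^*(\rm BT_\rmH,\Z/\ell^n)^{\rm W_\rmH}$ for a maximal torus $\rm T_\rmH\subset\rmH$; applying case (i) to the trivial $\rm T_\rmH$-action on a point closes the loop. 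The main obstacle of the whole argument is case (i): one must carefully construct the motivic Gysin sequence for the $\rmG_m$-torsor and rigorously match it against its \'etale counterpart under the cycle map, showing that the generators of $\rmH^2_{et}(\rmY_m,\mu_{\ell^n}(1))$ above $\rmH^2_{et}(\rmX,\mu_{\ell^n}(1))$ are visibly Chern classes of equivariant line bundles (i.e., characters of $\rmT$). Cases (ii) and (iii) then follow by standard splitting-principle arguments of the style of \cite{Tot99,CJ19}.
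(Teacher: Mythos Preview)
Your reformulation and overall architecture match the paper's: reduce to $\Br_{\rmG}(\rmX)_{\ell^n}=0$, handle the torus case directly, deduce (ii) from (i) by a transfer, and treat (iii) by identifying $\EG^{gm,m}\times_{\rmG}(\rmG/\rmH)$ with $\BH^{gm,m}$. For case (i) you take a Gysin/Leray--Hirsch route through the $\bG_m$-torsor, whereas the paper argues by Mayer--Vietoris over the standard affine cover of $\BT^{gm,m}=(\mathbb P^m)^r$, using the Beilinson--Lichtenbaum identification $\mathbb Z/\ell^n(j)=\tau_{\le j}\rmR\epsilon_*\epsilon^*\mathbb Z/\ell^n(j)$ together with Lemmas~\ref{isom.lemma}--\ref{diagm.chase.lemma}. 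Either approach is viable in bidegree $(2,1)$.

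There is, however, a factual error in (ii) and a genuine gap in (iii). In (ii), the map $\EG^{gm,m}/\rmT\to \BG^{gm,m}$ is \emph{not} finite \'etale of degree $|\rmW|$: its fibre is $\rmG/\rmT$, the full flag variety. The splitting you want comes from the Becker--Gottlieb transfer for the proper smooth map with fibre $\rmG/\rmT$ (using $\chi(\rmG/\rmT)=|\rmW|$), which is exactly what the paper invokes via \cite{CJ20}, \cite{JP20}; this is why the hypothesis that $\rmG$ be \emph{special} matters (so that the Borel construction can be carried out Nisnevich-locally). Your conclusion survives, but the mechanism is not the one you name.

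In (iii), the claimed isomorphism $\rmH^*(\BH,\Z/\ell^n)\cong \rmH^*(\rmB\rmT_{\rmH},\Z/\ell^n)^{\rmW_{\rmH}}$ does \emph{not} follow from $t(\rmH)$ being prime to $\ell$; it would require $|\rmW_{\rmH}|$ prime to $\ell$, which is a different condition. For instance $\rmH=\SL_2$ has $t(\rmH)=1$ but $|\rmW_{\rmH}|=2$; with $\ell=2$ one has $\rmH^{2}_{et}(\rmB\SL_2,\mu_2)=0$ while $\rmH^{2}_{et}(\rmB\bG_m,\mu_2)^{\rmW_{\rmH}}=\Z/2$, so your route cannot close. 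The paper avoids this entirely: Proposition~\ref{torsion.index.prop} shows directly, via the push--pull identity $f_*(a\cdot f^*(x))=t(\rmH)\cdot x$ along $f:\BB^{gm}\to\BH^{gm}$ (with $\BB^{gm}\simeq\BT^{gm}$ a cellular scheme where the cycle map is an isomorphism), that both the kernel and cokernel of the cycle map $\H^{*,\bullet}_M(\BH^{gm},\Z/\ell^n)\to\H^*_{et}(\BH^{gm},\mu_{\ell^n}(\bullet))$ are annihilated by $t(\rmH)$. When $t(\rmH)$ is a unit mod $\ell^n$ this gives the isomorphism outright, with no appeal to $\rmW_{\rmH}$-invariants.
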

 \begin{corollary}
 \label{cor.1} (i) Under the assumptions of Theorem ~\ref{nonequiv.to.equiv}(i) or (ii), if $\Br(\rmX)_{\ell^n} =0$, so is
   $\Br([\rmX/\rmG])_{\ell^n}$. More generally, if $\rmX$ is a smooth scheme of finite type over a perfect field $k$ and provided
   with the action by a split linear algebraic group $\rmG$ that is special, and if $\Br(\rmX^s)_{\ell^n}=0$, then so is  $\Br([\rmX^s/\rmG^s])_{\ell^n}$, provided $|\rmW|$ is prime to $\ell$.
   Here, for a scheme $\rmY$ over $k$, $\rmY^s$ denotes its base extension to the separable closure of $k$ and $\rmW$ denotes the Weyl group associated to a split maximal torus in $\rmG$.
 \vskip .1cm \noindent
 (ii) Assume the base field $k$ is separably closed. If $\rmH$ is a connected linear algebraic group whose torsion index is prime to $\ell$, then $\Br(\BH)_{\ell^n}=0$, where
 $\BH$ denotes the classifying stack of $\rmH$, that is $[Spec \, k/\rmH]$.
 \end{corollary}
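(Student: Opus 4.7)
For part~(i), I would combine Theorem~\ref{nonequiv.to.equiv} with the equivariant Kummer short-exact sequence~\eqref{Kummer.seq.2} and Theorem~\ref{Br.grp.quot.stacks}. The hypothesis $\Br(\rmX)_{\ell^n}=0$ is, via the non-equivariant version of the Kummer sequence applied to $\rmX$, exactly the statement that the cycle map $\rmH^{2,1}_{\M}(\rmX,\Z/\ell^n)\to\rmH^2_{et}(\rmX,\mu_{\ell^n}(1))$ is an isomorphism, which is precisely the supplementary input needed so that Theorem~\ref{nonequiv.to.equiv}(i) or~(ii) produces an isomorphism
\[
\rmH^{2,1}_{\rmG,\M}(\rmX,\Z/\ell^n)\;\longrightarrow\;\rmH^2_{\rmG,et}(\rmX,\mu_{\ell^n}(1)).
\]
Identifying the left-hand side with $\Pic(\EG^{gm,m}\times_\rmG\rmX)/\ell^n$ for $m\gg 0$ via equivariant Hilbert~90 (as in~\eqref{Hilb.90}), the first map in~\eqref{Kummer.seq.2} becomes surjective, so $\Br_\rmG(\rmX)_{\ell^n}=0$. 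Theorem~\ref{Br.grp.quot.stacks} then identifies this with $\Br([\rmX/\rmG])_{\ell^n}$.

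For the ``more general'' statement with $k$ only perfect, I would base-change to the separable closure $k^s$, which coincides with $\bar k$ and is therefore separably (and even algebraically) closed. Connectedness, specialness and splitness of $\rmG$ are all preserved, and the order of the Weyl group of a split maximal torus is unchanged. Thus the hypotheses of Theorem~\ref{nonequiv.to.equiv}(ii) are in force for the pair $(\rmX^s,\rmG^s)$ over $k^s$, and the preceding paragraph applies verbatim.

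For part~(ii) I would realize $\BH$ as a quotient stack of a smooth scheme. Embed $\rmH$ as a closed subgroup of some $\GL_n$ (possible by the standard faithful representation theorem for linear algebraic groups). Then $\rmY:=\GL_n/\rmH$ is a smooth quasi-projective $k$-scheme (by Chevalley) on which $\GL_n$ acts transitively with stabilizer $\rmH$, yielding a canonical isomorphism $[\rmY/\GL_n]\cong\BH$. With $\rmG:=\GL_n$, the hypothesis of Theorem~\ref{nonequiv.to.equiv}(iii) is met, since $\rmH$ is a closed connected linear algebraic subgroup of $\rmG$ whose torsion index is prime to $\ell$ by assumption. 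The same Kummer-sequence argument as in part~(i) then yields $\Br(\BH)_{\ell^n}=\Br([\rmY/\GL_n])_{\ell^n}=0$.

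\textbf{Main obstacle.} With Theorems~\ref{Br.grp.quot.stacks} and~\ref{nonequiv.to.equiv} in hand, the corollary is essentially a formal consequence; the only items requiring care are the use of equivariant Hilbert~90 to identify the leftmost term of~\eqref{Kummer.seq.2} with $\Pic(\EG^{gm,m}\times_\rmG\rmX)/\ell^n$, the verification that all relevant hypotheses on $\rmG$ (connectedness, specialness, Weyl-group order) descend along the base change to $k^s$ in the second half of~(i), and the stack-level identification $[\GL_n/\rmH/\GL_n]\cong\BH$. None of these looks genuinely obstructive.
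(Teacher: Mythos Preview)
Your proposal is correct and follows essentially the same route as the paper: for (i) you use the Kummer short-exact sequence to translate $\Br(\rmX)_{\ell^n}=0$ into the cycle-map isomorphism, feed that into Theorem~\ref{nonequiv.to.equiv}, and read off $\Br_{\rmG}(\rmX)_{\ell^n}=0$ from the equivariant Kummer sequence, exactly as the paper does; the base-change reduction for the second half of (i) is also identical. For (ii) the paper's proof is a single sentence (``follows readily from (i), in view of Theorem~\ref{Br.grp.quot.stacks}''), whereas you spell out the embedding $\rmH\hookrightarrow\GL_n$, the identification $[\GL_n/\rmH\,/\,\GL_n]\cong\BH$, and the appeal to Theorem~\ref{nonequiv.to.equiv}(iii); this is precisely the argument the paper gives later in Example~\ref{eg.1}, so your version is in fact more explicit but not different in substance.
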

 \begin{remark} Observe that if $\rmX$ is a projective smooth variety that is rational, then $\Br(\rmX)_{\ell^n} =0$.
  The Corollary then shows that, under the assumptions of Theorem ~\ref{nonequiv.to.equiv}(i),  $\Br([\rmX/\rmG])_{\ell^n}=0$ as well. We will show in section 4  that Corollary ~\ref{cor.1} provides a very quick proof of the triviality of the 
  $\ell^n$-torsion part of the Brauer group of the moduli stack of elliptic curves, for any prime $\ell$ different from $char(k)$, as long as $k$ is separably closed, and $char(k) \ne 2, 3$: see also see Theorem ~\ref{mod.ell.curves}.
 \end{remark}
\vskip .1cm
In case $\rmG$ is {\it not} connected, one has the following extension of Theorem ~\ref{nonequiv.to.equiv}. Let $\rmG \ra \tilde \rmG$
denote an imbedding of $\rmG$ as a closed subgroup of a connected linear algebraic group. In particular, it applies to the case when
 $\rmG$ is a finite group.
 \begin{theorem} 
 \label{nonequiv.to.equiv.1}
 Assume the base field $k$ is algebraically closed. 
 Suppose $\rmX$ is a smooth scheme,  provided
 with an action by the {\it not-necessarily connected} linear algebraic group $\rmG$. Then the induced map 
 \[\rmH^{2,1}_{\rmG, \M}(\rmX, Z/\ell^n) \ra \rmH^2_{\rmG, et}(\rmX, \mu_{\ell^n}(1))\]
 is also an isomorphism  under the following hypotheses:
\vskip .1cm
 $|\tilde \rmW|$  is relatively prime to $\ell$ and  the cycle map  induces an isomorphism
 \[\rmH^{2, 1}_{\M}(\tilde \rmG\times_{\rmG}\rmX, Z/\ell^n) \ra \rmH^2_{et}(\tilde \rmG\times_{\rmG}\rmX, \mu_{\ell^n}(1)),\]
 where $\rmG$ imbeds as a closed subgroup of the connected linear algebraic group $\tilde \rmG$ which is also assumed to be {\it special}, $\tilde \rmW$ is the Weyl group
 of $\tilde \rmG$, and $|\tilde \rmW|$ is the order of $\tilde \rmW$.
\end{theorem}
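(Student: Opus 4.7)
The plan is to reduce to the connected case already proved in Theorem~\ref{nonequiv.to.equiv}(ii) by extension of the structure group from $\rmG$ to $\tilde\rmG$. The underlying device is the standard equivalence of quotient stacks
\[
[\rmX/\rmG]\;\simeq\;[(\tilde\rmG\times_{\rmG}\rmX)/\tilde\rmG],
\]
which sends a principal $\rmG$-bundle $P\to S$ with $\rmG$-equivariant map $P\to\rmX$ to the induced principal $\tilde\rmG$-bundle $\tilde\rmG\times_{\rmG}P\to S$ with its canonical $\tilde\rmG$-equivariant map to $\tilde\rmG\times_{\rmG}\rmX$. Combined with the smooth-site comparison of \cite[Theorem~1.6]{J20} recalled just before Definition~\ref{Br.grp.stacks}, this yields natural isomorphisms
\[
\rmH^{2,1}_{\rmG, \M}(\rmX, Z/\ell^n)\;\cong\;\rmH^{2,1}_{\tilde\rmG, \M}(\tilde\rmG\times_{\rmG}\rmX, Z/\ell^n),
\]
\[
\rmH^{2}_{\rmG, et}(\rmX, \mu_{\ell^n}(1))\;\cong\;\rmH^{2}_{\tilde\rmG, et}(\tilde\rmG\times_{\rmG}\rmX, \mu_{\ell^n}(1)),
\]
compatibly with the cycle maps. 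Equivalently, at the Borel-construction level, if $\{\tilde\rmU_m\}$ is a system of admissible gadgets for $\tilde\rmG$, then since $\tilde\rmG$ is special the map $\tilde\rmU_m\to\tilde\rmU_m/\rmG$ is a Zariski-locally trivial principal $\rmG$-bundle, and one has a natural scheme-level identification $\tilde\rmU_m\times_{\rmG}\rmX\cong\tilde\rmU_m\times_{\tilde\rmG}(\tilde\rmG\times_{\rmG}\rmX)$ computing the same cohomology on either side.

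Next I would verify that $\tilde\rmG\times_{\rmG}\rmX$ is smooth: the diagonal $\rmG$-action on $\tilde\rmG\times\rmX$ is free (through the first factor), its quotient exists as a smooth scheme since $\tilde\rmG\to\tilde\rmG/\rmG$ is a principal $\rmG$-bundle and $\tilde\rmG\times\rmX$ is smooth, and the residual $\tilde\rmG$-action on $\tilde\rmG\times_{\rmG}\rmX$ through the first factor makes it into a smooth $\tilde\rmG$-scheme. Since by hypothesis $\tilde\rmG$ is connected and special, $|\tilde\rmW|$ is prime to $\ell$, and the cycle map for the non-equivariant cohomology of $\tilde\rmG\times_{\rmG}\rmX$ is an isomorphism, all hypotheses of Theorem~\ref{nonequiv.to.equiv}(ii) are met by the pair $(\tilde\rmG\times_{\rmG}\rmX, \tilde\rmG)$. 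That theorem then furnishes the isomorphism
\[
\rmH^{2,1}_{\tilde\rmG, \M}(\tilde\rmG\times_{\rmG}\rmX, Z/\ell^n)\;\cong\;\rmH^{2}_{\tilde\rmG, et}(\tilde\rmG\times_{\rmG}\rmX, \mu_{\ell^n}(1)),
\]
which, transported through the identifications above, is precisely the isomorphism claimed for $(\rmX, \rmG)$.

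The main obstacle is really only the first step: establishing the change-of-structure-group identification simultaneously in motivic and \'etale cohomology, compatibly with the cycle map between them. The stack-theoretic formulation together with the smooth-site comparison of \cite[Theorem~1.6]{J20} and the admissible-gadget independence recalled in the Introduction (\cite[Appendix~B]{CJ19}) reduces this to formal bookkeeping, after which the proof is a direct application of the connected case in Theorem~\ref{nonequiv.to.equiv}(ii), with no new ingredients required. Note that the role of the specialness of $\tilde\rmG$ here is twofold: to make the $\rmG$-bundle $\tilde\rmG\to\tilde\rmG/\rmG$ Zariski-locally trivial (so that $\tilde\rmG\times_{\rmG}\rmX$ is a scheme), and to put us in the setting of Theorem~\ref{nonequiv.to.equiv}(ii) after the reduction.
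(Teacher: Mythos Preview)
Your proposal is correct and follows exactly the paper's approach: establish the change-of-group isomorphisms $\rmH^{*,\bullet}_{\rmG,\M}(\rmX,\,\cdot\,)\cong\rmH^{*,\bullet}_{\tilde\rmG,\M}(\tilde\rmG\times_{\rmG}\rmX,\,\cdot\,)$ in both motivic and \'etale cohomology, then invoke Theorem~\ref{nonequiv.to.equiv}(ii) for the pair $(\tilde\rmG\times_{\rmG}\rmX,\tilde\rmG)$. The paper's proof records only these two isomorphisms and the reference to Theorem~\ref{nonequiv.to.equiv}; your additional remarks on smoothness of $\tilde\rmG\times_{\rmG}\rmX$, the stack-level justification, and the dual role of specialness are helpful elaborations but not new ingredients.
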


\vskip .1cm
Section 4 is devoted to considering various examples making use of the above theorems. We will summarize here
a few of these, and one may consult section 4 for additional examples and details.
\vskip .1cm
Next let $\rmX$ denote a smooth projective curve of genus $g$ over $k$, provided with a $k$-rational point. 
Then one knows the isomorphism of stacks (see for example, \cite[Proposition 4.2.5]{Wang}):
\begin{equation}
 \label{Bun1}
 {\rm Bun}_{1, d}(\rmX) \cong {\rm B}{\mathbb G}_m^{gm} \times {\bold Pic}^d(\rmX),
\end{equation}
where ${\rm B}{\mathbb G}_m^{gm} = \colimn {\rm B}{\mathbb G}^{gm,n}_m$, 
${\rm Bun}_{1, d}(\rmX)$ denotes the moduli stack of line bundles of degree $d$ on $\rmX$ and $\bPic^d(\rmX)$ denotes
the Picard scheme. 
In view of the above isomorphism of stacks, one may define the Brauer group of the stack ${\rm Bun}_{1, d}(\rmX)$ to be
the Brauer group of the stack ${\rm B}{\mathbb G}_m^{\rm gm} \times {\bPic}^d(\rmX)$.
Then, we obtain the following theorem.
\begin{theorem} Assume the base field $k$ is separably closed. Then, assuming the above situation, we obtain the isomorphism:
 \label{main.thm.2}
 \[\Br({\rm Bun}_{1, d}(\rmX))_{\ell ^n} \cong \Br({\bold Pic}^d(\rmX))_{\ell ^n} \cong \Br({\rm Sym}^d(\rmX))_{\ell ^n}.\]
In particular, $\Br({\rm Bun}_{1, d}(\rmX))_{\ell ^n} \cong 0$ if $\rmX$ is rational.
\end{theorem}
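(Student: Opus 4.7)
The plan is to chain three identifications: $\Br(\Bun_{1,d}(\rmX))_{\ell^n} \to \Br({\rm B}\mathbb{G}_m^{gm} \times \bPic^d(\rmX))_{\ell^n} \to \Br(\bPic^d(\rmX))_{\ell^n} \to \Br(\Sym^d(\rmX))_{\ell^n}$, using respectively the stack isomorphism \eqref{Bun1}, a K\"unneth/projective-bundle decomposition that peels off the $B\mathbb{G}_m$-factor, and the Abel--Jacobi morphism, which for $d$ sufficiently large presents $\Sym^d(\rmX)$ as a projective bundle over $\bPic^d(\rmX)$.

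\textbf{First isomorphism.} Starting from \eqref{Bun1}, regard ${\rm B}\mathbb{G}_m^{gm} \times \bPic^d(\rmX)$ as the quotient stack $[\bPic^d(\rmX)/\mathbb{G}_m]$ with trivial $\mathbb{G}_m$-action. Theorem~\ref{Br.grp.quot.stacks} then identifies $\Br(\Bun_{1,d}(\rmX))_{\ell^n}$ with $\Br_{\mathbb{G}_m}(\bPic^d(\rmX))_{\ell^n}$. Using the standard finite approximation ${\rm B}\mathbb{G}_m^{gm,m}=\mathbb{P}^{m-1}$, the product $\mathbb{P}^{m-1}\times \bPic^d(\rmX)$ is a trivial projective bundle over $\bPic^d(\rmX)$; the projective bundle formula in \'etale cohomology gives
\[
H^2_{et}(\mathbb{P}^{m-1}\times \bPic^d(\rmX),\mu_{\ell^n}(1)) \,\cong\, H^2_{et}(\bPic^d(\rmX),\mu_{\ell^n}(1)) \oplus \mathbb{Z}/\ell^n,
\]
and analogously $\Pic(\mathbb{P}^{m-1}\times\bPic^d(\rmX))/\ell^n \cong \Pic(\bPic^d(\rmX))/\ell^n \oplus \mathbb{Z}/\ell^n$. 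The extra $\mathbb{Z}/\ell^n$ on each side is generated by $c_1$ of the tautological line bundle on $\mathbb{P}^{m-1}$, and the cycle map between these two copies is an isomorphism. Passing to the limit over $m$ and invoking the equivariant Kummer sequence \eqref{Kummer.seq.2} shows the cokernels of the cycle map (i.e.\ the $\ell^n$-torsion Brauer groups) agree, giving the first isomorphism. This is essentially the content of Theorem~\ref{nonequiv.to.equiv}(i) applied to the torus $\mathbb{G}_m$ acting trivially.

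\textbf{Second isomorphism.} Since $\rmX$ carries a rational point, a Poincar\'e line bundle $\mathcal{L}$ exists on $\rmX\times \bPic^d(\rmX)$. For $d\ge 2g-1$ one has $H^1(\rmX,L)=0$ for every $L\in \bPic^d(\rmX)$, so by cohomology-and-base-change the direct image $\pi_*\mathcal{L}$ is locally free of rank $d-g+1$ on $\bPic^d(\rmX)$, and the Abel--Jacobi morphism $\sigma\colon \Sym^d(\rmX)\to \bPic^d(\rmX)$ identifies $\Sym^d(\rmX)$ with the Zariski-locally trivial projective bundle $\mathbb{P}(\pi_*\mathcal{L})$. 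The same projective-bundle/Kummer argument as in Step 2 then yields $\Br(\Sym^d(\rmX))_{\ell^n}\cong \Br(\bPic^d(\rmX))_{\ell^n}$.

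\textbf{Rational case and main obstacle.} If $\rmX\cong \mathbb{P}^1$, then $g=0$, $\bPic^d(\rmX)$ is a reduced point (so has trivial Brauer group), and $\Sym^d(\mathbb{P}^1)\cong\mathbb{P}^d$, whose Brauer group also vanishes; the chain of isomorphisms forces $\Br(\Bun_{1,d}(\rmX))_{\ell^n}=0$. The main obstacle I anticipate is the second isomorphism for small $d$: when $d<g$ the map $\sigma$ is a closed immersion rather than a projective bundle, so one cannot argue by a projective bundle formula directly. One would need either to restrict the statement to $d\ge 2g-1$, or to reduce general $d$ to the large-degree case via translation by a fixed effective divisor of large degree (using $\bPic^d\cong\bPic^{d'}$) together with a birational invariance argument for Brauer groups of smooth projective varieties.
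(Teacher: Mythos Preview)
Your argument for the first isomorphism is essentially the paper's: it also peels off the $\rmB{\mathbb G}_m^{gm}$-factor via the K\"unneth decomposition for the ind-scheme $\colimn {\mathbb P}^n$ (appealing to \cite[Theorem 4.5, Corollary 4.6]{J01} for the linear-scheme K\"unneth formula), and then observes that the cycle map is an isomorphism on the $\rmB{\mathbb G}_m^{gm}$-factor so the cokernel is controlled entirely by $\bPic^d(\rmX)$. Your projective-bundle phrasing is a concrete instance of the same computation.

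For the second isomorphism the paper takes a different route: it does not re-prove $\Br(\bPic^d(\rmX))_{\ell^n}\cong \Br(\Sym^d(\rmX))_{\ell^n}$ here but cites \cite[Theorem 1.2]{IJ20}. Your Abel--Jacobi projective-bundle argument is correct for $d\ge 2g-1$ and is indeed one of the ingredients in that reference, but the obstacle you flag for small $d$ is real, and your proposed fix does not close it. Translating by a fixed divisor gives $\bPic^d(\rmX)\cong \bPic^{d'}(\rmX)$ and hence $\Br(\bPic^d)_{\ell^n}\cong \Br(\bPic^{d'})_{\ell^n}\cong \Br(\Sym^{d'})_{\ell^n}$ for $d'\gg 0$, but this says nothing about $\Br(\Sym^d)_{\ell^n}$ for the original small $d$: the varieties $\Sym^d(\rmX)$ and $\Sym^{d'}(\rmX)$ have different dimensions, so birational invariance is unavailable between them. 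To cover all $d$ one needs additional input (as in \cite{IJ20}), or else to restrict the statement to the range where $\sigma$ is a projective bundle.

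The rational case is handled the same way in both: $\Sym^d({\mathbb P}^1)\cong {\mathbb P}^d$ is rational projective smooth, hence has trivial Brauer group.
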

\vskip .1cm
Next we consider the moduli stack of elliptic curves, which has a nice presentation as a quotient stack for the action of ${\mathbb G}_m$: see \cite{Ols}.
\begin{theorem}
\label{mod.ell.curves}
Let ${\mathcal M}_{1,1}$ denote the moduli stack of elliptic curves over the base field $k$ (which we recall is separably closed). Assume that
  $char(k) \ne 2,3$ and $\ell$ is a prime different from $char(k)$. Then $\Br({\mathcal M}_{1,1})_{\ell^n}=0$.
\end{theorem}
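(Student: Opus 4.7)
The plan is to use the standard Weierstrass presentation of $\cM_{1,1}$ as a quotient stack for a $\mathbb{G}_m$-action and then invoke the torus case of Corollary~\ref{cor.1}(i). Since $\mathrm{char}(k) \ne 2, 3$, every elliptic curve over $k$ admits a Weierstrass model $y^2 = x^3 + ax + b$ with nonvanishing discriminant $\Delta = -16(4a^3+27b^2)$, and two such forms are isomorphic exactly when related by $(a, b)\mapsto(\lambda^{-4}a,\lambda^{-6}b)$ for some $\lambda\in\mathbb{G}_m$. Following \cite{Ols}, this gives
\[
\cM_{1,1} \;\cong\; [U/\mathbb{G}_m], \qquad U:=\oSpec\,k[a,b][\Delta^{-1}]\;\subset\;\mathbb{A}^2_k,
\]
with $\mathbb{G}_m$ acting by weights $(4,6)$. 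In particular $U$ is smooth and quasi-projective.

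The next step is to verify that $\Br(U)_{\ell^n}=0$. Since $U$ is the complement in $\mathbb{A}^2$ of the irreducible divisor $V(\Delta)$, one has $\Pic(U)=0$, so the Kummer sequence gives $\Br(U)_{\ell^n}\cong \H^2_{et}(U,\mu_{\ell^n}(1))$. Because $\H^{\ge 1}_{et}(\mathbb{A}^2,\mu_{\ell^n}(1))=0$ over the separably closed field $k$, the localization long exact sequence for the pair $(\mathbb{A}^2,V(\Delta))$ reduces the computation of $\H^2_{et}(U,\mu_{\ell^n}(1))$ to that of the local cohomology groups $\H^*_{V(\Delta)}(\mathbb{A}^2,\mu_{\ell^n}(1))$. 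Now the only singularity of the cuspidal cubic $V(\Delta)$ is the unibranch cusp at the origin, so the normalization map $\mathbb{A}^1\to V(\Delta)$ is a universal homeomorphism and $V(\Delta)$ carries the \'etale cohomology of $\mathbb{A}^1$. Stratifying $V(\Delta)=\{0\}\sqcup V(\Delta)^{sm}$ and applying smooth purity on each stratum then yields the desired vanishing.

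With $\rmG=\mathbb{G}_m$ a torus, $\rmX=U$ smooth, and $\Br(U)_{\ell^n}=0$, Corollary~\ref{cor.1}(i) gives $\Br([U/\mathbb{G}_m])_{\ell^n}=0$; Theorem~\ref{Br.grp.quot.stacks} then identifies this with $\Br(\cM_{1,1})_{\ell^n}$, finishing the proof. The main technical obstacle is the Brauer vanishing for $U$: since $V(\Delta)$ is singular, smooth purity is not immediately available for $\mathbb{A}^2\setminus V(\Delta)$, and one must exploit the unibranch nature of the cuspidal singularity (so that the \'etale cohomology of $V(\Delta)$ agrees with that of its smooth normalization $\mathbb{A}^1$) to carry through the localization computation cleanly. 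Once that input is in place, the torus hypothesis in Corollary~\ref{cor.1}(i) makes the passage from $U$ to the quotient stack entirely formal.
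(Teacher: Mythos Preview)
Your strategy is exactly the paper's: present $\cM_{1,1}$ as $[U/\mathbb G_m]$ via Weierstrass forms, reduce to $\Br(U)_{\ell^n}=0$, and then handle the singular discriminant curve by stratifying off the cusp and using purity on the smooth pieces. The invocation of Corollary~\ref{cor.1}(i) at the end is also how the paper closes.

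There is one genuine gap in your sketch. After stratifying $V(\Delta)=\{0\}\sqcup V(\Delta)^{sm}$ and applying purity, you get an exact sequence
\[
0 = \rmH^3_{\{0\}}(\A^2,\mu_{\ell^n}(1)) \to \rmH^3_{V(\Delta)}(\A^2,\mu_{\ell^n}(1)) \to \rmH^1_{et}(\mathbb G_m,\mu_{\ell^n}(0)) \xrightarrow{\alpha} \rmH^4_{\{0\}}(\A^2,\mu_{\ell^n}(1)),
\]
and the two rightmost groups are both $\mathbb Z/\ell^n$, not zero. So purity on the strata alone does \emph{not} give the vanishing; you must still show $\alpha$ is injective. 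The paper does this by an extra step: it proves $\rmH^4_{V(\Delta)}(\A^2,\mu_{\ell^n}(1))=0$ (via $\rmH^3_{et}(U,\mu_{\ell^n}(1))=0$, which in turn is checked by comparing with $\A^2\setminus\A^1$), forcing $\alpha$ to be surjective and hence an isomorphism. You should either supply this argument or compute $\alpha$ directly.

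A smaller point: the universal homeomorphism observation is correct but does not do the work you assign it. Cohomology with supports $\rmH^*_{V(\Delta)}(\A^2,-)$ depends on the embedded pair, not just on the \'etale topos of $V(\Delta)$; knowing that $V(\Delta)$ has the cohomology of $\A^1$ does not by itself control $\rmH^*_{V(\Delta)}(\A^2,-)$. What you actually use (and what the paper uses) is only that $V(\Delta)\setminus\{0\}\cong\mathbb G_m$, which follows already from the normalization being an isomorphism off the cusp.
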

\vskip .1cm \noindent
This theorem is essentially Theorem ~\ref{ell.curves} and a quick proof of that theorem is discussed in section 4.
\vskip .2cm
Next we shift our focus to the Brauer groups of GIT quotients for actions of connected reductive groups $\rmG$ on {\it smooth} schemes, with the
 base field assumed to be {\it algebraically closed} of arbitrary characteristic $p \ge 0$. 
In this context, 
one recalls that such schemes admit a $\rmG$-stable stratification (that is, a decomposition into locally closed smooth and
$\rmG$-stable subschemes) based on stability considerations: see \cite{Kir84} and \cite{ADK}.
Of key importance to us is that the stratification of the above scheme $\rmX$ provides the long exact sequences. We consider
this is in a slightly more general context as follows. Let $\rmU \subseteq \rmX$ denote an open $\rmG$-stable subscheme. Then one
obtains the long exact (localization) sequences
\vskip .1cm 
 \begin{equation}
\label{mot.exact.seq}
\cdots \ra \H_{\rmG, \rmX- \rmU, \M}^{2i, i}(\rmX, Z/\ell^n) \ra \H_{\rmG, \M}^{2i, i}(X, Z/\ell^n) \ra 
 \rmH^{2i, i}_{\rmG, \M}( \rmU, Z/\ell^n) \ra  \H_{\rmG, \rmX- \rmU, \M}^{2i+1, i}(\rmX, Z/\ell^n) \ra \cdots,
 \end{equation} 
 \vskip .2cm
\begin{equation}
 \label{et.exact.seq}
\cdots \ra \H_{\rmG, \rmX- \rmU, et}^{2i}(\rmX, \mu_{\ell^n}(i)) \ra \H_{\rmG, et}^{2i}(X, \mu_{\ell^n}(i)) \ra 
 \rmH^{2i}_{\rmG, et}( \rmU, \mu_{\ell^n}(i)) \ra \H_{\rmG, \rmX- \rmU, et}^{2i+1}(\rmX, \mu_{\ell^n}(i)) \ra \cdots.
 \end{equation}
We are particularly interested in the situation when the above long exact sequences break up into short exact sequences at $i=1$, 
that is, where the maps 
 \begin{align}
\label{surj.ss}
\H_{\rmG, \M}^{2, 1}(X, Z/\ell^n) &\ra \rmH^{2, 1}_{\rmG, \M}( \rmU, Z/\ell^n)\\
\H_{\rmG, et}^{2}(X, \mu_{\ell^n}(1)) &\ra \rmH^{2}_{\rmG, et}( \rmU, \mu_{\ell^n}(1)) \notag
\end{align}
in both the above long exact sequences are surjections.
This is not always the case with finite coefficients, so our first result is to say, when in fact, 
one obtains the above results for {\it suitable choice of finite 
coefficients}. 
\vskip .1cm
Let $\{\rmS_{\beta}|\beta\}$ denote the stratification of the given smooth scheme $\rmX$
 defined in \cite{Kir84} based on stability considerations in the context of geometric invariant theory. We will then adopt the following terminology from \cite{Kir84}: 
for each $\beta \eps \B$, let $\rmY_{\beta}$ denote a locally closed subscheme of $\rmS_{\beta}$ so that it is stabilized
by a parabolic subgroup $\rmP_{\beta}$, with Levi factor $\rmL_{\beta}$. Moreover, then 
$\rmS_{\beta} \cong \rmG{\underset {\rmP_{\beta}} \times}Y_{\beta}^{ss}$, and there is a scheme $\rmZ_{\beta}$ with
an $\rmL_{\beta}$-action and an $\rmL_{\beta}$-equivariant Zariski-locally trivial surjection $\rmY_{\beta}^{ss} \ra \rmZ_{\beta}^{ss}$ whose 
fibers are affine spaces. Moreover, $\rmZ _{\beta}$ is a smooth locally closed $\rmL_{\beta}$ -stable subscheme of $\rmX$, so that it is
a component of the fixed point scheme for the induced action by a $1$-parameter subgroup $\rmT'_{\beta}$ of $\rmL_{\beta}$. 
Finally, it is important to observe that the normal bundle to the stratum $\rmS_{\beta}$ in $\rmX$ is a quotient of the restriction of 
the normal bundle 
to $\rmZ_{\beta}$ in $\rmX$. If $\rmS_{\beta}$ denotes a stratum on $\rmX$ for the stratification considered above (based on stability), $\rmW_{\beta}$
 will denote the Weyl group corresponding to the Levi subgroup $\rmL_{\beta}$.
\vskip .1cm
We will also let $\rmS_{\beta_o}$ denote any one of the strata in $\rmX- \rmX^{ss}$ which are of the highest dimension (and hence
 open in $\rmX - \rmX^{ss}$). We will denote the corresponding Weyl group in $\rmL_{\beta_o}$ by $\rmW_{\beta_o}$.
 \vskip .1cm
Then the following are some of hypotheses we may impose on the given scheme $\rmX$ and the given action by the linear algebraic group
 $\rmG$.
 \begin{enumerate}[\rm(i)]
  \item $\rmX$ is $\rmG$-projective with a {\it manageable} $\rmG$-linearized action on $\rmX$ in the sense of \cite[Theorem 4.7]{ADK}. (Observe that 
 the condition on the action being manageable is automatically satisfied if $char(k) =0$).
  \item Alternatively, $\rmX$ is the affine space of representations of a fixed quiver $\rmQ$ with dimension vector $\bd$.
\end{enumerate}

\begin{theorem}
\label{main.thm.3}
Assume the base field $k$ is algebraically closed, the reductive group $\rmG$ is {\it connected}, $\rmX$ is a smooth $\rmG$-scheme, and  $\ell$ is a prime
different from $char (k)$. Assume further that one of the following hypothesis also holds:
\vskip .1cm
(a) $\rmU = \rmX^{ss}$ and $codim_{\rmX}(\rmX -\rmX^{ss}) \ge 2$ {\it or}
\vskip .1cm
(b) we are in one of the two cases considered in (i) or (ii) above, 
$\rmU = \rmX^{ss}$ and if $\rmS_{\beta_o}$ denotes any stratum in $\rmX - \rmX^{ss}$ of the highest dimension in $\rmX- \rmX^{ss}$, then 
$\ell$ is relatively prime to $|\rmW_{\beta_o}|$, which denotes the order of the Weyl group $\rmW_{\beta_o}$.
\vskip .1cm \noindent
Then the maps in ~\eqref{surj.ss} are surjections.
 \end{theorem}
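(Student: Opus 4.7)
The strategy is to analyze the boundary maps in the equivariant localization sequences \eqref{mot.exact.seq} and \eqref{et.exact.seq} at $i=1$. Surjectivity of the restriction maps in \eqref{surj.ss} is implied by the vanishing of $\H^{3, 1}_{\rmG, \rmX - \rmU, \M}(\rmX, \Z/\ell^n)$ and $\H^3_{\rmG, \rmX - \rmU, et}(\rmX, \mu_{\ell^n}(1))$. I would filter $\rmX - \rmU = \rmX - \rmX^{ss}$ by the Kirwan stratification $\{\rmS_\beta\}$ of smooth $\rmG$-invariant locally closed subschemes, ordered by increasing dimension, and iterate the localization sequence stratum by stratum, so that at each step the support is a single closed smooth $\rmG$-stratum $\rmS_\beta$ in a suitable open subscheme of $\rmX$. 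Equivariant Gysin purity then identifies the cohomology with supports with
\[
\H^{3 - 2 c_\beta,\, 1 - c_\beta}_{\rmG, \M}(\rmS_\beta, \Z/\ell^n) \quad \text{and} \quad \H^{3 - 2 c_\beta}_{\rmG, et}(\rmS_\beta, \mu_{\ell^n}(1 - c_\beta)),
\]
where $c_\beta = \codim_\rmX \rmS_\beta$. Under hypothesis (a), every $c_\beta \ge 2$, so the motivic weight $1 - c_\beta \le -1$ is negative (forcing vanishing of motivic cohomology of a smooth scheme) and the \'etale degree $3 - 2 c_\beta < 0$ likewise forces vanishing; this settles case (a).

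Under hypothesis (b), strata with $c_\beta \ge 2$ are treated identically. The remaining possibility is a top-dimensional stratum $\rmS_{\beta_o}$ of codimension $1$, in which case purity reduces us to verifying $\H^{1, 0}_{\rmG, \M}(\rmS_{\beta_o}, \Z/\ell^n) = 0$ and $\H^{1}_{\rmG, et}(\rmS_{\beta_o}, \Z/\ell^n) = 0$. The motivic assertion is automatic: weight-zero motivic cohomology of a smooth scheme is Zariski cohomology of the constant sheaf, which vanishes in positive degree on an irreducible scheme, and this applies to every Borel approximation $\EG^{gm,m} \times_\rmG \rmS_{\beta_o}$. For the \'etale assertion, invoke the Kirwan description $\rmS_{\beta_o} \cong \rmG \times_{\rmP_{\beta_o}} \rmY_{\beta_o}^{ss}$ together with the change-of-group isomorphism, the homotopy invariance along the Zariski-locally trivial affine bundle $\rmY_{\beta_o}^{ss} \to \rmZ_{\beta_o}^{ss}$, and the retraction $\rmP_{\beta_o} \to \rmL_{\beta_o}$ via the unipotent radical, to obtain
\[
\H^1_{\rmG, et}(\rmS_{\beta_o}, \Z/\ell^n) \;\cong\; \H^1_{\rmL_{\beta_o}, et}(\rmZ_{\beta_o}^{ss}, \Z/\ell^n).
\]
Since $\ell \nmid |\rmW_{\beta_o}|$, the splitting principle identifies this with the $\rmW_{\beta_o}$-invariants of $\H^1_{\rmT_{\beta_o}, et}(\rmZ_{\beta_o}^{ss}, \Z/\ell^n)$ for a maximal torus $\rmT_{\beta_o} \subset \rmL_{\beta_o}$; the Leray spectral sequence for $\rmE\rmT_{\beta_o} \times_{\rmT_{\beta_o}} \rmZ_{\beta_o}^{ss} \to \rmB\rmT_{\beta_o}$ (whose base has vanishing $\H^1_{et}$ and trivial monodromy since $\rmT_{\beta_o}$ is connected) reduces matters to showing $\H^1_{et}(\rmZ_{\beta_o}^{ss}, \Z/\ell^n) = 0$.

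The main obstacle is this final vanishing of $\H^1_{et}(\rmZ_{\beta_o}^{ss}, \Z/\ell^n)$, where the ambient geometric hypotheses on $\rmX$ are essential. In the quiver case (ii), $\rmZ_{\beta_o}$ is a linear subspace of the affine representation space---hence itself an affine space---and one must verify that the unstable locus inside $\rmZ_{\beta_o}$ has codimension at least two, whence \'etale $\H^1$ vanishes by purity. In the $\rmG$-projective manageable case (i), the description of $\rmZ_{\beta_o}$ as a connected component of a one-parameter-subgroup fixed-point scheme combined with manageability provides the analogous codimension control. Checking these codimension statements using the combinatorial data of the Kirwan stratification from \cite{Kir84} and \cite{ADK} will constitute the technical heart of the proof.
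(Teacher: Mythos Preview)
Your treatment of case (a) and of the motivic map in general is fine, though the paper handles the motivic surjectivity more directly: the localization sequence for Chow groups simply terminates at $\CH^1_{\rmG}(\rmU,0,\Z/\ell^n)\to 0$, so surjectivity is automatic regardless of codimension, with no need to invoke purity or irreducibility of the strata.

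The genuine gap is in your case (b) when $c_{\beta_o}=1$. You aim to prove that the \emph{source} $\H^1_{\rmG,et}(\rmS_{\beta_o},\Z/\ell^n)$ of the boundary map vanishes, and you correctly reduce this to showing $\H^1_{et}(\rmZ_{\beta_o}^{ss},\Z/\ell^n)=0$. But this last vanishing is not true in general, and your proposed mechanism for it---that the unstable locus inside $\rmZ_{\beta_o}$ has codimension at least $2$---has no reason to hold. In the quiver case, $\rmZ_{\beta_o}$ is itself a product of smaller representation spaces $\rmR(\rmQ,\bfd^{(j)})$, and its unstable locus is stratified by exactly the same Kirwan mechanism; there is nothing preventing a codimension-$1$ unstable stratum from appearing there as well. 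Your argument is therefore circular: you reduce the problem to the same problem one level down in the recursion, with no base case.

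The paper's approach avoids this entirely. Rather than trying to kill $\H^1_{\rmG,et}(\rmS_{\beta_o},\Z/\ell^n)$, it shows that the Gysin map
\[
\H^3_{\rmG,\rmX^o-\rmX^{ss},et}(\rmX^o,\mu_{\ell^n}(1)) \longrightarrow \H^3_{\rmG,et}(\rmX^o,\mu_{\ell^n}(1))
\]
is \emph{injective}. After composing with restriction to $\rmS_{\beta_o}$ and applying purity, this becomes multiplication by the equivariant Euler class of the normal bundle $\rmN_{\beta_o}$ on $\H^*_{\rmG,et}(\rmS_{\beta_o},\mu_{\ell^n}(\bullet))$. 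The Atiyah--Bott criterion then applies: via the identifications with $\H^*_{\rmL_{\beta_o},et}(\rmZ_{\beta_o}^{ss},\mu_{\ell^n}(\bullet))$ that you already wrote down, the normal bundle restricts to a quotient of the normal bundle to $\rmZ_{\beta_o}^{ss}$ in $\rmX$, on which the subtorus $\rmT'_{\beta_o}$ acts with no nonzero fixed vector. A short argument (the paper's Lemma~\ref{lem:ab}) then shows the Euler class is a non-zero-divisor provided $\ell\nmid|\rmW_{\beta_o}|$---and this is precisely where that hypothesis enters, via the splitting $\H^*_{\rmL_{\beta_o}}\hookrightarrow\H^*_{\rmT_{\rmL_{\beta_o}}}$. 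So the role of the Weyl-group condition is to make the Euler class argument work, not to force vanishing of $\H^1$ of the stratum.
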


We will let $\rmW$ denote the Weyl group in $\rmG$ and $|\rmW|$ will denotes its order.

 \begin{theorem}
 \label{cor.2}
 Assume that the cycle map, $cycl: \rmH^{2, 1}_{ \M}(\rmX, Z/\ell^n) \ra \rmH^{2}_{et}(\rmX, \mu_{\ell^n}(1))$ is an
 isomorphism (or equivalently, $Br(\rmX)_{\ell^n} =0$) and that the group $\rmG$ is special.
 \vskip .1cm 
  Then the cycle map 
 $cycl: \rmH^{2,1}_{\rmG, \M}(\rmX^{ss}, Z/\ell^n) \ra \rmH^{2}_{\rmG, et}(\rmX^{ss}, \mu_{\ell^n}(1))$ is a surjection
 (or equivalently,  \\${\rm Br}_{\rmG}(\rmX^{ss})_{\ell^n}=0$), provided
 $|\rmW|$ is relatively prime to $\ell$, 
 and one of the following additional hypotheses holds:
 \vskip .1cm 
 (i) the assumptions as in case (a) of Theorem ~\ref{main.thm.3} holds  { or }
 \vskip .1cm
 (ii) the assumptions as in case (b) of Theorem ~\ref{main.thm.3} holds.
\end{theorem}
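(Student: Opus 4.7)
The plan is to reduce the statement to a two-line diagram chase combining Theorem~\ref{nonequiv.to.equiv} with Theorem~\ref{main.thm.3}. I would first invoke Theorem~\ref{nonequiv.to.equiv}(ii): the base field $k$ is algebraically closed (hence perfect and separably closed), $\rmG$ is connected reductive and special, $|\rmW|$ is prime to $\ell$, and by hypothesis the non-equivariant cycle map $\rmH^{2,1}_{\M}(\rmX, Z/\ell^n) \to \rmH^{2}_{et}(\rmX, \mu_{\ell^n}(1))$ is an isomorphism. Thus the equivariant cycle map
\[
c_{\rmX}\colon \rmH^{2,1}_{\rmG,\M}(\rmX, Z/\ell^n) \longrightarrow \rmH^{2}_{\rmG,et}(\rmX, \mu_{\ell^n}(1))
\]
is itself an isomorphism.

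Next, under either hypothesis (i) or (ii) of Theorem~\ref{cor.2}, the hypotheses of Theorem~\ref{main.thm.3} apply with $\rmU = \rmX^{ss}$, so both restriction maps
\[
f\colon \rmH^{2,1}_{\rmG,\M}(\rmX, Z/\ell^n) \twoheadrightarrow \rmH^{2,1}_{\rmG,\M}(\rmX^{ss}, Z/\ell^n), \qquad
g\colon \rmH^{2}_{\rmG,et}(\rmX, \mu_{\ell^n}(1)) \twoheadrightarrow \rmH^{2}_{\rmG,et}(\rmX^{ss}, \mu_{\ell^n}(1))
\]
are surjections; equivalently, the long exact sequences \eqref{mot.exact.seq} and \eqref{et.exact.seq} break into short exact sequences at $i=1$. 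Naturality of the cycle map in $\rmX \hookleftarrow \rmX^{ss}$ gives the commutative square
\[
c_{\rmX^{ss}} \circ f \;=\; g \circ c_{\rmX}.
\]
Since $c_{\rmX}$ is an isomorphism and $g$ is surjective, the right-hand composite $g \circ c_{\rmX}$ is surjective; since $f$ is surjective, this forces $c_{\rmX^{ss}}$ to be surjective as well. Translating back via the Kummer short exact sequence \eqref{Kummer.seq.2} for the simplicial scheme $\EG^{gm,m} \times_{\rmG} \rmX^{ss}$, surjectivity of the cycle map is equivalent to $\Br_{\rmG}(\rmX^{ss})_{\ell^n} = 0$.

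There is essentially no obstacle in this argument; the real content has already been packaged into Theorems~\ref{nonequiv.to.equiv} and \ref{main.thm.3}. The only point that deserves care is verifying that the hypotheses of Theorem~\ref{nonequiv.to.equiv}(ii) are genuinely available here: $\rmG$ connected reductive and special, $k$ perfect, $|\rmW|$ prime to $\ell$, and the non-equivariant cycle map on $\rmX$ an isomorphism are all either assumed in Theorem~\ref{cor.2} or follow from the standing assumption that $k$ is algebraically closed. Once this bookkeeping is done, the proof is simply the diagram chase above.
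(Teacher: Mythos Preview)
Your proposal is correct and follows essentially the same approach as the paper: form the commutative square of cycle maps and restriction maps, use Theorem~\ref{nonequiv.to.equiv}(ii) to see the left vertical map is an isomorphism, use Theorem~\ref{main.thm.3} to see the horizontal maps are surjective, and conclude by a diagram chase. One cosmetic remark: in your last step you do not actually need the surjectivity of $f$; once $c_{\rmX^{ss}}\circ f = g\circ c_{\rmX}$ is surjective, $c_{\rmX^{ss}}$ is automatically surjective.
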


 \begin{theorem}
 \label{Br.triv.2}
 Assume in addition to the hypotheses of the last theorem that one of the following holds:
 \begin{enumerate}
 \item $\rmX^s = \rmX^{ss}$, that is,  the subscheme of semi-stable points on $\rmX$ is equal to the subscheme  of
 stable points on $\rmX$, \it {or }
 \item $codim_{\rmX}(\rmX^{ss}-\rmX^s) \ge 2$. 
\end{enumerate}
 Then, ${\rm Br}(\rmX//\rmG)_{\ell^n} =0$, if $\ell$ is also prime to the orders of the stabilizers at points on $\rmX^s$, where
 $\rmX//\rmG = \rmX^s/\rmG$ denotes the GIT quotient of $\rmX$ by $\rmG$.
\end{theorem}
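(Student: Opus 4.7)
The plan is to chain two reductions, starting from the vanishing $\Br_\rmG(\rmX^{ss})_{\ell^n}=0$ provided by Theorem~\ref{cor.2}: first pass from $\rmX^{ss}$ to $\rmX^s$, and then from the quotient stack $[\rmX^s/\rmG]$ to its coarse moduli $\rmX//\rmG = \rmX^s/\rmG$.

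For the first step, hypothesis (1) is immediate. Under hypothesis (2), the closed $\rmG$-stable complement $\rmX^{ss}-\rmX^s$ has codimension at least $2$ in $\rmX^{ss}$. Applying motivic and \'etale purity equivariantly---that is, to the Borel approximations $\EG^{gm,m}\times_\rmG(-)$, in which the codimension is preserved---shows that the equivariant cohomology with support vanishes in bidegrees $(2,1)$ and $(3,1)$ motivically, and in degrees $2$ and $3$ \'etale-cohomologically. The localization sequences~\eqref{mot.exact.seq} and~\eqref{et.exact.seq} then give isomorphisms $\H^{2,1}_{\rmG,\M}(\rmX^{ss},\Z/\ell^n)\cong \H^{2,1}_{\rmG,\M}(\rmX^s,\Z/\ell^n)$ and $\H^{2}_{\rmG,et}(\rmX^{ss},\mu_{\ell^n}(1))\cong \H^{2}_{\rmG,et}(\rmX^s,\mu_{\ell^n}(1))$, as well as on the $\Pic_\rmG(-)/\ell^n$ terms entering the Kummer sequence. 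Chasing the Kummer sequence~\eqref{Kummer.seq.2} transports $\Br_\rmG(\rmX^{ss})_{\ell^n}=0$ to $\Br_\rmG(\rmX^s)_{\ell^n}=0$, which by Theorem~\ref{Br.grp.quot.stacks} reads $\Br([\rmX^s/\rmG])_{\ell^n}=0$.

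For the second step, consider the coarse moduli morphism $\pi:[\rmX^s/\rmG]\to \rmX//\rmG$. The action of $\rmG$ on $\rmX^s$ has finite stabilizers by stability, and these have orders prime to $\ell$ by hypothesis. I would analyze the Leray spectral sequence for $\pi$ with coefficients in ${\mathbb G}_m$: on the one hand $\pi_*{\mathbb G}_m={\mathbb G}_m$ on $\rmX//\rmG$; on the other hand, for $i\ge 1$ the stalk of $R^i\pi_*{\mathbb G}_m$ at a geometric point with stabilizer $\rmH_x$ identifies with $\H^i(\rmB\rmH_x,{\mathbb G}_m)$, which is torsion of exponent dividing $|\rmH_x|$, hence has trivial $\ell^n$-torsion. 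A diagram chase through the low-degree exact sequence of this spectral sequence, combined with the Kummer sequence on both $\rmX//\rmG$ and $[\rmX^s/\rmG]$, then yields $\Br(\rmX//\rmG)_{\ell^n}\cong \Br([\rmX^s/\rmG])_{\ell^n}=0$.

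The main obstacle will be the second step: rigorously establishing the claims on $\pi_*{\mathbb G}_m$ and on $R^i\pi_*{\mathbb G}_m$ under the prime-to-$\ell$ stabilizer hypothesis. In characteristic zero this follows from Luna's \'etale slice theorem; in positive characteristic one must invoke its tame Deligne--Mumford analogue, using the $\ell$-coprime order hypothesis to ensure that the finite stabilizers are linearly reductive and that $\pi$ has the expected \'etale-local structure, so that the higher direct images do compute cohomology of classifying stacks of the stabilizer groups.
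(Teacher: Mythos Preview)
Your proposal is correct and follows essentially the same two-step reduction as the paper: codimension-two purity to pass from $\rmX^{ss}$ to $\rmX^s$, and the prime-to-$\ell$ stabilizer hypothesis to identify $\Br_\rmG(\rmX^s)_{\ell^n}$ with $\Br(\rmX^s/\rmG)_{\ell^n}$. The paper reverses the order of the two reductions and is terser on your second step (it simply asserts the identification $\Br(\rmX^s/\rmG)_{\ell^n}\cong\Br_\rmG(\rmX^s)_{\ell^n}$ without the Leray analysis you sketch), and in the first step it uses only the \'etale isomorphism~\eqref{isom.ss.s} together with a surjectivity chase on the Kummer diagram rather than also invoking motivic purity---but these are differences of presentation, not of substance.
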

\vskip .1cm \noindent
Section 6 discusses various examples where the above theorems are utilized.
\vskip .1cm
{\bf Acknowledgment}. The second author thanks the Institute for Mathematical Sciences, Chennai, for supporting his visit during the
summer of 2019 and for providing a pleasant working environment during his visit. He also thanks Ajneet Dhillon for 
helpful discussions that have contributed to the paper.
\section{Equivariant Brauer groups vs. Brauer groups of quotient stacks: Proof of Theorem ~\ref{Br.grp.quot.stacks}}
The goal of this section is to prove Theorem ~\ref{Br.grp.quot.stacks}. We begin discussing the construction of geometric
classifying spaces and Borel construction followed by the simplicial variant. Throughout this section, $k$ will denote
any field.
\vskip .2cm
\subsection{Admissible gadgets}
\label{subsubsection:adm.gadgets}
Let $\rmG$ denote a fixed linear algebraic group over $k$. 
 We will define a pair $(\rmW,{\rm U})$ of smooth varieties over $k$
to be  a {\sl good pair} for $\rmG$ if $\rmW$ is a $k$-rational representation of $\rmG$ and
${\rm U} \subsetneq \rmW$ is a $\rmG$-invariant non-empty open subset on which $\rmG$ acts freely and so that ${\rm U}/\rmG$ is a variety. 
It is known ({\sl cf.} \cite[Remark~1.4]{Tot99}) that a 
good pair for $\rmG$ always exists.
\begin{definition}
\label{defn:Adm-Gad}
A sequence of pairs $ \{ (\rmW_m, {\rm U}_m)|m \ge 1\}$ of smooth varieties
over $k$ is called an {\sl admissible gadget} for $\rmG$, if there exists a
good pair $(\rmW,{\rm U})$ for $\rmG$ such that $\rmW_m = \rmW^{\times^ m}$ and ${\rm U}_m \subsetneq \rmW_m$
is a $\rmG$-invariant open subset such that the following hold for each $m \ge 1$.
\begin{enumerate}
\item
$\left({\rm U}_m \times \rmW\right) \cup \left(\rmW \times {\rm U}_m\right)
\subseteq {\rm U}_{m+1}$ as $\rmG$-invariant open subvarieties.
\item
$\{\codim _{{\rm U}_{m+1}}\left({\rm U}_{m+1} \setminus \left({\rm U}_{m} \times \rmW\right)\right)|m\}$ is 
a strictly increasing sequence,  
\newline \noindent
that is, \[\codim_{{\rm U}_{m+2}}\left({\rm U}_{m+2} \setminus 
\left({\rm U}_{m+1} \times W\right)\right) > 
\codim_{{\rm U}_{m+1}}\left({\rm U}_{m+1} \setminus \left({\rm U}_{m} \times W\right)\right).\]
\item
$\{\codim_{\rmW_m}\left(\rmW_m \setminus {\rm U}_m\right)|m\}$ is a strictly increasing sequence, that is, 
\[\codim_{\rmW_{m+1}}\left(\rmW_{m+1} \setminus {\rm U}_{m+1}\right)
> \codim_{\rmW_m}\left(\rmW_m \setminus {\rm U}_m\right).\]
\item
${\rm U}_m$ has a free $\rmG$-action, the quotient ${\rm U}_m/\rmG$ is  a smooth quasi-projective
variety over $k$ and ${\rm U}_m \ra {\rm U}_m/\rmG$ is a principal $\rmG$-bundle.
\end{enumerate}
\end{definition}
\begin{lemma}
\label{smooth.quotients.1} Let ${\rm U}$ denote a smooth quasi-projective scheme over a field $\oK$ with a free action by the linear algebraic group 
$\rmG$ so that the quotient ${\rm U}/\rmG$ exists as a smooth quasi-projective scheme over $\oK$. Then if $\rmX$ is any smooth $\rmG$-quasi-projective  scheme
over $\oK$, the quotient ${\rm U}{\underset {\rmG} \times}\rmX \cong ({\rm U} {\underset {\oSpec \, \oK} \times}\rmX)/\rmG$ (for the diagonal action of $\rmG$) exists  as a  scheme
over $\oK$. 
\end{lemma}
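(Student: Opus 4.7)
The plan is to realize $U \times_G X$ as an associated fiber bundle over $U/G$ with fiber $X$, and to invoke effective descent for quasi-projective morphisms in order to ensure that this associated bundle really is representable by a scheme.

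First I would record the preliminary observations. The diagonal action of $\rmG$ on $U \times_{\oSpec \, \oK} X$ is free because its restriction to the first factor is free, and the first projection $U \times X \to U$ is $\rmG$-equivariant. Composing with the structure map $U \to U/\rmG$ yields a $\rmG$-invariant morphism $\pi \colon U \times X \to U/\rmG$. This $\pi$ is our candidate structural map; the goal is to show that the set-theoretic orbit space is representable by a scheme $Q$ over $U/\rmG$ such that $U \times X \to Q$ is a principal $\rmG$-bundle.

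Next I would build $Q$ locally on $U/\rmG$. Since $U \to U/\rmG$ is, by hypothesis, a principal $\rmG$-bundle, it is fppf-locally (and in fact \'etale-locally, as $\rmG$ is smooth) trivial. Choose a faithfully flat cover $V \to U/\rmG$ together with a $\rmG$-equivariant isomorphism $U \times_{U/\rmG} V \cong \rmG \times V$. Pulling the diagonal action back gives $(U \times X) \times_{U/\rmG} V \cong \rmG \times V \times X$, whose honest quotient by the diagonal $\rmG$-action is the scheme $V \times X$, which is quasi-projective over $V$ because $X$ is quasi-projective over $\oK$. On the double fiber product $V \times_{U/\rmG} V$, the two pullbacks are canonically identified via the transition cocycle of the principal bundle $U$ acting on $X$, giving a descent datum on the quasi-projective $V$-scheme $V \times X$.

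The final step is to appeal to effective descent for quasi-projective morphisms along faithfully flat quasi-compact morphisms (SGA 1, Expos\'e VIII, or Grothendieck's FGA ``Technique de descente, IV''). This produces a scheme $Q$ over $U/\rmG$ together with an isomorphism $Q \times_{U/\rmG} V \cong V \times X$ compatible with the descent data, and then one checks the universal property: a $\rmG$-invariant morphism $U \times X \to T$ to any scheme $T$ pulls back over $V$ to a morphism $V \times X \to T$, and these glue back down to a unique morphism $Q \to T$ by descent of morphisms. This identifies $Q$ with $(U \times_{\oSpec \, \oK} X)/\rmG$ and also with the Borel construction $U \underset{\rmG}{\times} X$. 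The main obstacle is ensuring that descent is genuinely effective, and this is precisely why the quasi-projectivity hypothesis on $X$ is needed: without it, the local pieces $V \times X$ might fail to be quasi-projective over $V$, and effective descent could fail outside the setting of algebraic spaces or stacks.
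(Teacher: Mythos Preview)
Your descent argument is essentially correct and gives a genuinely different proof from the paper's, which consists of a single citation to \cite[Proposition 7.1]{MFK94}. The paper invokes the GIT machinery directly: since the diagonal action of $\rmG$ on $\rmU \times \rmX$ is free and $\rmU \times \rmX$ is $\rmG$-quasi-projective, the existence of a geometric quotient as a scheme is a black-box consequence of Mumford's result. Your approach instead unpacks the geometry by building the quotient as an associated fiber bundle over $\rmU/\rmG$ via fppf descent, which is more transparent and in some sense explains \emph{why} the quotient exists: it is locally just $\rmV \times \rmX$.

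One point deserves tightening. You invoke ``effective descent for quasi-projective morphisms,'' but descent along fppf covers is not automatically effective for bare quasi-projective morphisms; what SGA~1, Expos\'e~VIII (or FGA) actually gives is effectivity when the morphism carries a \emph{relatively ample invertible sheaf compatible with the descent datum}. In your situation this is available precisely because $\rmX$ is assumed $\rmG$-quasi-projective, i.e.\ admits a $\rmG$-linearized ample line bundle: its pullback to $\rmV \times \rmX$ is relatively ample over $\rmV$, and the $\rmG$-linearization is exactly what makes it compatible with the transition cocycle. You gesture at this in your final sentence, but it would strengthen the argument to make the role of the $\rmG$-linearization explicit rather than only the quasi-projectivity of $\rmX$.
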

\begin{proof} 
This follows, for
example, from \cite[Proposition 7.1]{MFK94}.
\end{proof}
\vskip .2cm
 An {\it example} of an admissible gadget for $\rmG$ can be constructed as follows: start  with a good pair $(\rmW, {\rm U})$ for $\rmG$. 
The choice of such a good pair will vary depending on $\rmG$. 
 Choose a faithful $k$-rational representation ${\rm R}$ of $\rmG$ of dimension $n$, that is, $\rmG$ admits a closed
immersion into $\GL(R)$. Then 
$\rmG$ acts freely on an open subset ${\rm U}$ of ${\rm W}= {\rm R}^{\oplus n} = {\rm {End}}({\rm R})$ so that ${\rm U}/\rmG$ is a variety. (For e.g.  ${\rm U}=\GL(R)$.)
Let $\rmZ = \rmW \setminus {\rm U}$.
\vskip .2cm
Given a good pair $(\rmW, {\rm U})$, we now let
\begin{equation}
\label{adm.gadget.1}
 \rmW_m = \rmW^{\times m}, {\rm U}_1 = {\rm U} \mbox{ and } {\rm U}_{m+1} = \left({\rm U}_m \times \rmW \right) \cup
\left(\rmW \times {\rm U}_m\right) \mbox{ for }m \ge 1.
\end{equation} 
Setting 
$\rmZ_1 = \rmZ$ and $\rmZ_{m+1} = {\rm U}_{m+1} \setminus \left({\rm U}_m \times \rmW\right)$ for 
$m \ge 1$, one checks that $\rmW_m \setminus {\rm U}_m = \rmZ^m$ and
$\rmZ_{m+1} = \rmZ^m \times {\rm U}$.
In particular, $\codim _{\rmW_m}\left(\rmW_m \setminus {\rm U}_m\right) =
m (\codim_{\rmW}(\rmZ))$ and
$\codim _{{\rm U}_{m+1}}\left(\rmZ_{m+1}\right) = (m+1)d - m(\dim(\rmZ))- d =  m (\codim _{\rmW}(\rmZ))$,
where $d = \dim (\rmW)$. Moreover, ${\rm U}_m \to {{\rm U}_m}/\rmG$ is a principal $\rmG$-bundle and the quotient $\rmV_m={\rm U}_m/\rmG$ exists 
 as a smooth quasi-projective scheme. 
\subsection{The geometric and simplicial Borel constructions}
\label{Borel.constr}
Given an admissible gadget $\{(\rmW_m, \rmU_m)|m \ge 0\}$ for the linear algebraic group $\rmG$ and a  $\rmG$-scheme $\rmX$,
we define
\begin{align}
 \label{geom.Borel}
 \EG^{gm,m} =\rmU_m, \quad \EG^{gm,m}\times_{\rmG}\rmX &= \rmU_m\times_{\rmG}\rmX, \quad \BG^{gm,m} = \rmU_m\times_{\rmG} (Spec \, k), \mbox{ and } \\
 \pi_m &:\EG^{gm,m}\times_{\rmG}\rmX  \ra \BG^{gm,m}. \notag
\end{align}
\vskip .1cm \noindent
The ind-scheme $\{\EG^{gm,m}\times_{\rmG}\rmX|m\ge 0\}$ is the {\it geometric Borel construction}. 
We will often denote $\colimm \{\EG^{gm,m} \times_{\rmG} \rm X|m \ge 0\}$ by $\EG^{gm}\times_{\rmG}\rmX$.
We next consider $\EG \times_{\rmG} \rmX$ which
is the simplicial scheme defined by $\rmG^n \times \rmX$ in degree $n$, and with the structure maps defined as follows:
\begin{align}
 \label{simp.Borel}
 d_i(g_0, \cdots, g_n, x) &= (g_1, \cdots, g_n, x), i=0\\
                          &= (g_1, \cdots, g_{i-1}. g_i, \cdots, g_n, x), 0<i<n \notag \\
                          &= (g_1, \cdots, g_{n-1}, g_n.x), i=n, \mbox{ and } \notag\\
s_i(g_0, \cdots, g_{n-1}, x) &= (g_0, \cdots, g_{i-1}, e, g_i, \cdots, x) \notag
\end{align}
where $g_i \in \rmG$, $x \in \rmX$, $g_{i-1}.g_i$ denotes the product of $g_{i-1}$ and $g_i$ in $\rmG$, while 
$g_n.x$ denotes the product of $g_n$ and $x$. $e$ denotes the unit element in $\rmG$. This is the {\it simplicial Borel construction}.
Then we obtain the following identification, which is well-known.
\begin{lemma}
 \label{simpl.Borel}
 One obtains an isomorphism: $\EG\times_{\rmG} \rmX \cong cosk_0^{[\rmX/\rmG]}(\rmX)$, where $cosk_0^{[\rmX/\rmG]}(\rmX)$
 is the simplicial scheme defined in degree $n$ by the $(n+1)$-fold fibered product of $\rmX$ with itself over the stack $[\rmX/\rmG]$,
 with the structure maps of the simplicial scheme $cosk_0^{[\rmX/\rmG]}(\rmX)$ induced by the above fibered products.
 \end{lemma}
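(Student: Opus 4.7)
The plan is to identify each term degree by degree and then match the simplicial structure maps. First, recall that the canonical atlas $\rmX \to [\rmX/\rmG]$ is itself a $\rmG$-torsor. Indeed, a $T$-point of $[\rmX/\rmG]$ is a pair $(P \to T,\ \phi\colon P \to \rmX)$ consisting of a $\rmG$-torsor together with a $\rmG$-equivariant morphism, and the atlas corresponds to the trivial torsor $\rmG \times \rmX \to \rmX$ equipped with the action map $\rmG \times \rmX \to \rmX$ as its $\phi$.

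Next, I would compute the $2$-fold fiber product. A $T$-point of $\rmX \times_{[\rmX/\rmG]} \rmX$ consists of two morphisms $f_0, f_1 : T \to \rmX$ together with an isomorphism between the pulled-back trivial torsors that is compatible with the equivariant maps to $\rmX$. Such data are equivalent to a single element $g \in \rmG(T)$ subject to $f_1 = g \cdot f_0$, yielding a canonical isomorphism
\[
\rmG \times \rmX \;\xrightarrow{\sim}\; \rmX \times_{[\rmX/\rmG]} \rmX,\qquad (g,x)\;\mapsto\;(g\cdot x,\, x).
\]
Under this identification the two projections to $\rmX$ become the action map $(g,x)\mapsto g\cdot x$ and the second projection $(g,x)\mapsto x$, which agree with $d_1$ and $d_0$ of~\eqref{simp.Borel} in degree one.

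Iterating this computation—equivalently, noting that $(\rmG\times\rmX)\rightrightarrows\rmX$ (via the action and the second projection) is precisely the action groupoid presenting the stack $[\rmX/\rmG]$—one obtains an isomorphism $\rmX^{\times_{[\rmX/\rmG]}(n+1)} \cong \rmG^n \times \rmX$ for every $n \ge 0$. Consequently, $\mathrm{cosk}_0^{[\rmX/\rmG]}(\rmX)$ is canonically the nerve of the action groupoid of $\rmG$ on $\rmX$, and the nerve of that groupoid is by construction the simplicial Borel construction $\EG \times_{\rmG} \rmX$.

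It remains to check that the structure maps match. Omission of an interior factor in the iterated fiber product corresponds to group multiplication (matching the interior $d_i$ of~\eqref{simp.Borel}), omission of the first factor corresponds to the forgetful projection (matching $d_0$), and omission of the last factor corresponds to the action on $\rmX$ (matching $d_n$); the diagonal insertions in the fiber products recover the degeneracies $s_i$ by inserting the unit of $\rmG$. The only real obstacle is bookkeeping of conventions: once one fixes whether the action is written on the left or the right and whether the identification $\rmG\times\rmX\cong\rmX\times_{[\rmX/\rmG]}\rmX$ uses $(g,x)\mapsto(g\cdot x,x)$ or $(g,x)\mapsto(x,g\cdot x)$, every square commutes formally and the lemma follows.
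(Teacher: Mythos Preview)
The paper does not actually supply a proof of this lemma: it introduces the statement with the phrase ``the following identification, which is well-known'' and then moves on without argument. Your proof is the standard one and is correct. The key computation---that $\rmX \times_{[\rmX/\rmG]} \rmX \cong \rmG \times \rmX$ because the atlas $\rmX \to [\rmX/\rmG]$ is itself a $\rmG$-torsor, and hence that the iterated fiber products recover the nerve of the action groupoid---is exactly what underlies the ``well-known'' claim, and your identification of the face and degeneracy maps with those of ~\eqref{simp.Borel} is right. Your closing remark about bookkeeping is apt: the paper's own display ~\eqref{simp.Borel} has a mild indexing slip (an element in degree $n$ is written with coordinates $g_0,\dots,g_n$, i.e.\ $n+1$ group entries, while the text says the degree-$n$ term is $\rmG^n \times \rmX$), so fixing a convention and checking the squares, as you do, is really all there is to it.
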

 \vskip .1cm
For each fixed $m \ge 0 $, we obtain  the diagram of simplicial schemes (where $p_1$ is induced by the
projection $\EG^{gm, {\it m}} \times \rmX \ra \rmX$ and $p_2$ is induced by the projection 
$\EG \times (\EG^{gm, {\it m}} \times \rmX) \ra \EG^{gm, {\it m}} \times \rmX$):
\begin{equation}
  \xymatrix{&{\EG \times_{{\rmG}} (\EG^{gm, {\it m}} \times \rmX)} \ar@<1ex>[ld]_{\rm p_1} \ar@<-1ex>[rd]^{\rm p_2}\\
{\EG \times_{{\rmG}} \rmX}   && {\EG^{gm, {\it m}} {\underset {G} \times}\rmX}}
     \end{equation} 
\vskip .1cm \noindent
$\rmG$ acts diagonally on $\EG \times_{{\rmG}} (\EG^{gm, {\it m}} \times \rmX)$.
\begin{proposition}
\label{comp.isoms}
 (i) The map
\begin{align}
 \rmp_1^*: \rmH^1_{et}(\EG\times_{\rmG}\rmX, {\mathbb G}_m) &\ra \rmH^1_{et}(\EG \times_{\rmG} (\EG^{gm, {\it m}} \times \rmX), {\mathbb G}_m) \mbox{ and the map }\\
 \rmp_2^*: \rmH^1_{et}(\EG^{gm,m}\times_{\rmG}\rmX, {\mathbb G}_m) &\ra \rmH^1_{et}(\EG \times_{\rmG} (\EG^{gm, {\it m}} \times \rmX), {\mathbb G}_m), \mbox{ for m sufficiently large,}\notag
\end{align}
are isomorphisms. 
\vskip .1cm
(ii) The corresponding maps, \mbox{ for m sufficiently large,} with $\ell \ne char (k)$,
\begin{align}
 \rmp_1^*: \rmH^2_{et}(\EG\times_{\rmG}\rmX, \mu_{\ell^n}(1)) &\ra \rmH^2_{et}(\EG \times_{\rmG} (\EG^{gm, {\it m}} \times \rmX), \mu_{\ell^n}(1)), \mbox{ and }\\
 \rmp_2^*: \rmH^2_{et}(\EG^{gm,m}\times_{\rmG}\rmX, \mu_{\ell^n}(1)) &\ra \rmH^2_{et}(\EG \times_{\rmG} (\EG^{gm, {\it m}} \times \rmX), \mu_{\ell^n}(1)) \notag
\end{align}
are isomorphisms.
\end{proposition}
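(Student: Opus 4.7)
The plan is to handle the two projections $p_1$ and $p_2$ separately, by rather different arguments. The map $p_2^*$ will be shown to be an isomorphism in \emph{all} degrees via \'etale (equivalently fppf) descent along a principal $\rmG$-bundle, whereas $p_1^*$ will be shown to be an isomorphism only in the low degrees $1$ and $2$, via homotopy invariance and \'etale purity applied to the admissible gadget $\EG^{gm,m}$, combined with the simplicial descent spectral sequence.

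For $p_2$: the freeness condition built into Definition~\ref{defn:Adm-Gad} makes the $\rmG$-action on $\EG^{gm,m}$, and hence on $\EG^{gm,m} \times \rmX$ with the diagonal action, free, so the quotient map
\[\EG^{gm,m} \times \rmX \lrar (\EG^{gm,m} \times \rmX)/\rmG = \EG^{gm,m}\times_\rmG \rmX\]
is a principal $\rmG$-bundle. By Lemma~\ref{simpl.Borel}, the source simplicial scheme $\EG\times_\rmG (\EG^{gm,m} \times \rmX)$ identifies with the \v Cech nerve of this principal bundle. Standard fppf descent for an abelian sheaf $\cF$, together with the agreement of fppf and \'etale cohomology with coefficients $\mathbb{G}_m$ and $\mu_{\ell^n}(1)$, then yields that $p_2^*$ is an isomorphism in every cohomological degree, for both coefficient systems.

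For $p_1$: at simplicial level $n$, the map is the product projection
\[\rmG^n \times \EG^{gm,m} \times \rmX \lrar \rmG^n \times \rmX.\]
Since $\EG^{gm,m}=\rmU_m$ is an open subscheme of the affine space $\rmW_m$ whose complement has codimension $c_m \to \infty$ as $m \to \infty$, homotopy invariance together with \'etale purity gives, for any smooth scheme $\rmZ$ over $k$ and any $i<2c_m$,
\[\H^i_{et}(\rmZ\times\EG^{gm,m}, \mu_{\ell^n}(1)) \cong \H^i_{et}(\rmZ\times \rmW_m, \mu_{\ell^n}(1)) \cong \H^i_{et}(\rmZ, \mu_{\ell^n}(1)).\]
Choosing $m$ so that $c_m \ge 2$ makes this an isomorphism for $i \le 2$. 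The analogous Picard statement $\Pic(\rmZ \times \EG^{gm,m}) \cong \Pic(\rmZ)$ follows either directly from homotopy invariance of $\Pic$ on affine-space bundles plus the fact that removing a codimension $\ge 2$ closed subset of a smooth scheme does not alter its Picard group, or by using the Kummer sequence to bootstrap from the $\mu_{\ell^n}$ case together with $\Pic(\rmW_m)=0$. Applying these level-wise isomorphisms with $\rmZ = \rmG^n \times \rmX$ and assembling through the simplicial descent spectral sequence
\[E_1^{p,q} = \H^q_{et}(\rmG^p \times \rmY, \cF) \Rightarrow \H^{p+q}_{et}(\EG\times_\rmG \rmY, \cF)\]
for $\rmY = \rmX$ versus $\rmY = \EG^{gm,m} \times \rmX$ produces an isomorphism on $E_1$-pages in the range $q \le 2$, and hence on the abutment in total degrees $\le 2$.

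The main obstacle is the passage from the $\mu_{\ell^n}$-coefficient statement to the $\mathbb{G}_m$-coefficient (i.e.\ Picard) statement, since \'etale purity for $\mathbb{G}_m$ is more delicate than for torsion coefficients. The Kummer reduction requires the vanishing of the appropriate pieces of $\H^*_{et}(\rmW_m, \mathbb{G}_m)$, which follows from $\rmW_m$ being affine space over the base together with the smoothness of all factors $\rmG^n\times \rmX$ in play. The remaining bookkeeping is to pick $m$ uniformly large enough that $c_m$ dominates every cohomological degree we need; since the descent spectral sequence is first-quadrant and we only need information in total degrees $\le 2$, there are no convergence subtleties to address.
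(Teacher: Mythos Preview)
Your argument is correct, and your treatment of $p_2$ is essentially the same as the paper's: both recognize $\EG\times_{\rmG}(\EG^{gm,m}\times\rmX)$ as the smooth \v Cech nerve of the principal bundle over $\EG^{gm,m}\times_{\rmG}\rmX$ and invoke cohomological descent. The paper phrases this stalk-wise via the Leray spectral sequence for $p_2$ (identifying $R^tp_{2*}({\mathbb G}_m)_{\rm Spec\,A}$ with $\rmH^t_{smt}({\rm Spec}\,A,{\mathbb G}_m)$ because $\EG\times{\rm Spec}\,A$ is a smooth hypercover), whereas you state the global descent result directly; the content is the same, and your observation that $p_2^*$ is then an isomorphism in \emph{all} degrees is a mild strengthening.

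For $p_1$ the two arguments genuinely differ. The paper again runs the Leray spectral sequence, this time for $p_1$, and computes $R^tp_{1*}({\mathbb G}_m)$ at a strict Hensel point by comparing $\rmU_m\times{\rm Spec}\,A$ with ${\mathbb A}^m\times{\rm Spec}\,A$ via the higher Chow group localization sequence (writing $A$ as a filtered colimit of regular rings to make sense of $\CH^1(-,j)$). You instead use the simplicial descent spectral sequence $E_1^{p,q}=\rmH^q_{et}(\rmG^p\times\rmY,\cF)$ and compare level-wise: each $\rmG^p\times\rmX$ is smooth over $k$, so $\Pic(\rmG^p\times\rmX\times\rmU_m)\cong\Pic(\rmG^p\times\rmX)$ follows from homotopy invariance of $\CH^1$ plus the fact that excising a codimension $\ge 2$ closed subset of a smooth scheme leaves $\Pic$ unchanged, and the $\mu_{\ell^n}$ case follows from semi-purity. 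Your route is arguably more elementary, since it avoids the passage to strict Hensel rings and the attendant limit arguments for Chow groups; the paper's route has the advantage of packaging everything uniformly as a vanishing of $R^1p_{i*}$ and an identification $p_{i*}({\mathbb G}_m)\cong{\mathbb G}_m$. One small point: your aside about recovering the Picard statement ``by using the Kummer sequence to bootstrap from the $\mu_{\ell^n}$ case'' does not quite work as stated (Kummer only controls $\Pic/\ell^n$, not $\Pic$ itself), but your primary argument via homotopy invariance and codimension is sound, and you should also record the level-wise isomorphism on $\rmH^0(-,{\mathbb G}_m)$ (units extend across codimension $\ge 2$ in a normal scheme, and units of a polynomial ring over a reduced ring are the base units) since the $q=0$ row of the $E_1$-page enters the comparison in total degree $1$.
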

\begin{proof} The isomorphisms in (i) are rather involved, and therefore, we discuss the proof of (i) first.
A key to the proof is the observation that, over a base field $k$ which is separably closed, $\rmH^1_{et}({\mathbb A}^n, {\mathbb G}_m) \cong 0$, for any $n \ge 0$.
We consider the Leray spectral sequences associated to the maps $p_1$ and $p_2$:
\begin{align}
 \rmE_2^{s,t}(1) = \rmH^s_{et}(\EG\times_{\rmG} \times X, R^t \rmp_{1*}({\mathbb G}_m)) &\Longrightarrow \rmH^{s+t}_{et}(\EG \times_{\rmG} (\EG^{gm, {\it m}} \times \rmX), {\mathbb G}_m) \mbox{ and }\\
 \rmE_2^{s,t}(2) = \rmH^s_{et}(\EG^{gm,m}\times_{\rmG} \times X, R^t \rmp_{2*}({\mathbb G}_m)) &\Longrightarrow \rmH^{s+t}_{et}(\EG \times_{\rmG} (\EG^{gm, {\it m}} \times \rmX), {\mathbb G}_m).\notag
\end{align}
\vskip .1cm
Since $s, t \ge 0$, both spectral sequences converge strongly. 
\vskip .1cm
The stalks of 
$\rmR^t \rmp_{2*}({\mathbb G}_m) \cong \rmH^t(\EG{\underset {Spec \, k} \times} (Spec \, \rmA), {\mathbb G}_m)$, where $\rmA$ denotes
a strict Hensel ring. (Strictly speaking, in order to obtain the above identification, 
we need to make use of the simplicial topology as in \cite{J02} or \cite[5.4]{J20}. But we will ignore this rather subtle point for
the rest of the discussion.) Since $\EG \cong cosk_0^{Spec \, k}(\rmG)$, $\EG{\underset {Spec \, k} \times} (Spec \, \rmA) \cong cosk_0^{Spec \, \rmA}(\rmG\times_{Spec \, k}Spec \, A)$
is a {\it smooth hypercover} of ${\rm Spec} \, \rmA$. Therefore, we obtain the isomorphism:
\begin{equation}
\label{stalks.p2.1}
\rmH^t_{et}(\EG{\underset {Spec \, k} \times} (Spec \, A), {\mathbb G}_m) \cong \rmH^t_{smt}(\EG{\underset {Spec \, k} \times} (Spec \, A), {\mathbb G}_m) \cong \rmH^t(Spec \, A, {\mathbb G}_m).
\end{equation}
These groups are trivial for  $t=1$ (see, for example, \cite[Lemma 4.10]{Mi}). Therefore, it follows that for $t=1$, ${\rm R}^t \rmp_{2*}({\mathbb G}_m)_{Spec \, A} \cong 0$.
\vskip .2cm
Next we observe the isomorphism, by taking $t=0$ in ~\eqref{stalks.p2.1}:
\begin{equation}
 \label{stalks.p2.0}
 \rmp_{2*}({\mathbb G}_m)_{Spec \, A} \cong \rmH^0(\EG{\underset {Spec \, k} \times} (Spec \, A), {\mathbb G}_m) \cong \rmH^0(Spec \, A, {\mathbb G}_m),
\end{equation}
\vskip .1cm \noindent
where $\rmp_{2*}({\mathbb G}_m)_{Spec \, A}$ denotes the stalk of the sheaf $\rmp_{2*}({\mathbb G}_m)$ at ${\rm Spec }\, \rm A$.
Observing that ${\mathbb G}_m$ is in fact a sheaf on the flat site, and therefore also on the smooth site,
it follows that there is a natural map of sheaves ${\mathbb G}_m \ra \rmp_{2*}({\mathbb G}_m)$, where the ${\mathbb G}_m$ on the 
left (on the right) denotes the sheaf ${\mathbb G}_m$ restricted to the \'etale site of $\EG^{gm,m}\times_{\rmG}\rmX$ (the
\'etale site of ${\EG\times_{{\rmG}} (\EG^{gm, {\it m}} \times \rmX)}$, \res). 
The isomorphism in ~\eqref{stalks.p2.0} shows this
map induces an isomorphism stalk-wise. It follows that 
the natural map ${\mathbb G}_m \ra \rmp_{2*} ({\mathbb G}_m)$ of sheaves on the \'etale site is an isomorphism.
This provides the isomorphism:
\begin{equation}
\label{E2.p2}
\rmE_2^{1,0} = \rmH^1_{et}(\EG^{gm,m}\times_{\rmG}\rmX, \rmp_{2*}({\mathbb G}_m)) \cong \rmH^1_{et}(\EG^{gm,m}\times_{\rmG}\rmX, {\mathbb G}_m), m>>0.
\end{equation}
\vskip .2cm
The stalks of 
$\rmR^t \rmp_{1*}({\mathbb G}_m) \cong \rmH^t(\EG^{gm,m}{\underset {Spec \, k} \times} (Spec \, \rmA), {\mathbb G}_m)$, where $\rmA$ denotes
a strict Hensel ring, for all $t\ge 0$. Observe that this strict Hensel ring $\rmA$ is the stalk of the structure sheaf 
of $(\EG\times_{\rmG} \rmX)_n =\rmG^n \times \rmX$, at a geometric point. Hence it is a filtered direct limit $\lim_i \rmA_i$,
with each $\rmA_i$ regular. 
\vskip .1cm
To determine the groups $\rmH^t(\EG^{gm,m}{\underset {Spec \, k} \times} (Spec \, A), {\mathbb G}_m)$, we consider the long exact sequence (with $\EG^{gm,m} = \rmU_m$, which is assumed to be an open subscheme of the
 affine space ${\mathbb A}^m$, with $\rmZ_m = {\mathbb A}^m- \rmU_m$):
\begin{multline}
 \label{local.seq}
   \begin{split}
\cdots \ra \rmH^0_{\rmZ_m\times_{Spec \, k}Spec \, A,et}({\mathbb A}^m \times_{Spec \, k}Spec \, A, {\mathbb G}_m) \ra \rmH^0_{et}({\mathbb A}^m \times_{Spec \, k}Spec \, A, {\mathbb G}_m) \ra \\
{\overset {\alpha } \ra} \rmH^0_{et}(\rmU_m \times_{Spec \, k}Spec \, A, {\mathbb G}_m) \ra \rmH^1_{\rmZ_m\times_{Spec \, k}Spec \, A,et}({\mathbb A}^m \times_{Spec \, k}Spec \, A, {\mathbb G}_m)\\
\ra \rmH^1_{et}({\mathbb A}^m \times_{Spec \, k}Spec \, A, {\mathbb G}_m) {\overset {\beta} \ra} \rmH^1_{et}(\rmU_m \times_{Spec \, k}Spec \, A, {\mathbb G}_m) \ra \\
\ra \rmH^2_{\rmZ_m\times_{Spec \, k}Spec \, A,et}({\mathbb A}^m \times_{Spec \, k}Spec \, A, {\mathbb G}_m) \ra \cdots
   \end{split}
\end{multline}
\vskip .1cm
Next we observe the following isomorphisms for any smooth or regular quasi-projective scheme $\rmY$:
\begin{align}
 \label{CH1.isoms}
 \rmH^1_{et}(\rmY, {\mathbb G}_m) \cong \rmH^1_{Zar}(\rmY, {\mathbb G}_m) &\cong {\rm CH}^1(\rmY, 0) \mbox { and }\\
 \rmH^0_{et}(\rmY, {\mathbb G}_m) \cong \rmH^0_{Zar}(\rmY, {\mathbb G}_m) &\cong \Gamma(\rmY, \O_{\rmY})^* \cong {\rm CH}^1(\rmY, 1). \notag
\end{align}
Therefore, the map denoted $\alpha $ ($\beta$) in the  long exact sequence ~\eqref{local.seq} identifies
with the restriction 
\[{\rm CH}^1({\mathbb A}^m \times_{\rm Spec \, k}{\rm Spec \, A}, 1) \ra {\rm CH}^1(\rmU_m \times_{\rm Spec \, k}{\rm Spec \, A}, 1)\]
\[({\rm CH}^1({\mathbb A}^m \times_{\rm Spec \, k}{\rm Spec \, A}, 0) \ra {\rm CH}^1(\rmU_m \times_{\rm Spec \, k}{\rm Spec \, A}, 0), \res)\]
forming  part of the localization sequence for the higher Chow groups. In fact, the corresponding localization sequence is
given by:
\begin{multline}
 \label{local.seq.1}
   \begin{split}
\cdots \ra {\rm CH}^{1-c}({\rm Z_m\times_{Spec \, k}Spec \, A}, 1) \ra {\rm CH}^1({\rm {\mathbb A}^m \times_{Spec \, k}Spec \, A}, 1) {\overset {\alpha '} \ra} {\rm CH}^1(\rmU_m \times_{Spec \, k}Spec \, A, 1)\\
\ra {\rm CH}^{1-c}({\rm Z_m\times_{Spec \, k}Spec \, A}, 0) \ra {\rm CH}^1({\rm {\mathbb A}^m \times_{Spec \, k}Spec \, A}, 0) {\overset {\beta'} \ra} {\rm CH}^1(\rmU_m \times_{Spec \, k}Spec \, A, 0) \ra 0
   \end{split}
\end{multline}
where $c$ denotes
the codimension of $\rmZ_m$ in ${\mathbb A}^m$, which we assume is large. To see that one gets such a localization sequence,
one first replaces the strict Hensel ring $\rmA$ by one of the $\rmA_i$, where $\rmA =\lim_i \rmA_i$, with each $\rmA_i$ a regular local ring.
Clearly then the corresponding localization sequence exists and the groups in ~\eqref{local.seq.1} involving the $\rmZ_m$ are trivial, as $c$ is assumed to be large. 
At this point, one takes the direct limit over the $\rmA_i$: since
 the Chow groups are contravariantly functorial for flat maps, and filtered colimits are exact, we obtain the localization sequence ~\eqref{local.seq.1}.
Moreover, the  groups appearing in ~\eqref{local.seq.1} involving the ${\rm Z}_m$ are all trivial, thereby 
proving that the maps $\alpha'$ and $\beta'$ in ~\eqref{local.seq.1}, and therefore, the maps $\alpha$ and $\beta$ in ~\eqref{local.seq} are isomorphisms. This
provides the isomorphisms for $t=0, 1$:
\begin{align}
 \label{stalk.isoms}
 {\rm R}^t \rmp_{1*}({\mathbb G}_m)_{Spec \, A} \cong \rmH^t_{et}(\rmU_m \times_{Spec \, k}Spec \, A, {\mathbb G}_m) &\cong \rmH^t_{et}({\mathbb A}^m \times_{Spec \, k}Spec \, A, {\mathbb G}_m) \\
 &\cong \rmH^t_{et}(Spec \, A, {\mathbb G}_m). \notag
\end{align}
Therefore, it follows that for $t=1$, ${\rm R}^t \rmp_{1*}({\mathbb G}_m)_{Spec \, A} \cong 0$.
Since ${\mathbb G}_m$ is a sheaf on the flat and hence on the smooth topology, there is a natural map 
${\mathbb G}_m \ra \rmp_{1*}({\mathbb G}_m)$ of sheaves where the ${\mathbb G}_m$ on the left (on the right) is
a sheaf on the \'etale site of ${\EG \times_ {\rmG} \rmX}$ (on the \'etale site of 
$\EG \times_{\rm G}({\EG^{gm, {\it m}} \times\rmX})$, \res). The stalk-wise isomorphism in ~\eqref{stalk.isoms} for $t=0$ shows
that the natural map ${\mathbb G}_m \ra \rmp_{1*} ({\mathbb G}_m)$ of sheaves on the \'etale site is an isomorphism.
This provides the isomorphism:
\begin{equation}
\label{E2.p1}
\rmE_2^{1,0}(1) = \rmH^1_{et}(\EG\times_{\rmG}\rmX, \rmp_{1*}({\mathbb G}_m)) \cong \rmH^1_{et}(\EG\times_{\rmG}\rmX, {\mathbb G}_m).
\end{equation}
\vskip .1cm
Moreover, observing that the differentials in the spectral sequences above go from $\rmE_r^{p, q}$ to $\rmE_r^{p+r, q-r+1}$,
one sees that 
\begin{equation}
\rmE_r^{0,1}(1) = \rmE_r^{0,1}(2) =0 \mbox{ for all } r \ge 2 \mbox{ and that } \rmE_2^{1,0}(i) \cong E_r^{1,0}(i), \mbox{ for all } r\ge 2, i=1,2.
\end{equation}
The last observation shows that $\rmE_2^{1,0}(i)$, $i=1,2$ is isomorphic to the abutment in degree 1, namely,
$\rmH^1_{et}(\EG \times_{\rmG} (\EG^{gm, {\it m}} \times \rmX), {\mathbb G}_m)$, $m>>0$. Therefore, the isomorphisms 
in ~\eqref{E2.p1} and ~\eqref{E2.p2} complete the proof of (i).
\vskip .1cm
Next we consider the proof of (ii). The key point is to consider the Leray spectral sequences for the maps $p_1$ and $p_2$.
In this case, one may readily compute the stalks of $\rmR^tp_{i*}(\mu_{\ell^n}(1))$ to be trivial for $t= 1,2 $ and 
$\cong \mu_{\ell^n}(1)$ for $t=0$, and for $m>>0$. (See \cite[Theorem 1.6]{J20} for further details.) Therefore, the conclusions
 in (ii) follow readily.
\end{proof}
\begin{corollary}
\label{intr.stack}
 Assume the above context. 
 \vskip .1cm
(i) Then we obtain an isomorphism 
\[\rmH^1_{et}(\EG^{gm,m}\times_{\rmG}\rmX, {\mathbb G}_m) \cong \rmH_{et}^1(\EG\times_{\rmG}\rmX, {\mathbb G}_m) \cong \rmH_{smt}^1([\rmX/\rmG], {\mathbb G}_m), \mbox{ for } m >>0,\]
 which is functorial in the $\rmG$-scheme $\rmX$. 
 \vskip .1cm
 (ii) Moreover, we obtain isomorphisms:
 \[\rmH^2_{et}(\EG^{gm,m}\times_{\rmG}\rmX, \mu_{\ell^n}(1)) \cong \rmH_{et}^2(\EG\times_{\rmG}\rmX, \mu_{\ell^n}(1)) \cong \rmH_{smt}^2([\rmX/\rmG], \mu_{\ell^n}(1)) \mbox{ for } m >>0.\]
  which are functorial in the $\rmG$-scheme $\rmX$, and where $\ell \ne char (k)$. 
\vskip .1cm \noindent
  Here $\rmH_{smt}^1([\rmX/\rmG], {\mathbb G}_m)$ and $\rmH_{smt}^2([\rmX/\rmG], \mu_{\ell^n}(1))$ denote
 the cohomology of the quotient stack $[\rmX/\rmG]$ computed on the smooth site.
 \vskip .1cm
 (iii) One obtains an isomorphism $\Br_{\rmG}(\rmX)_{\ell^n} \cong \Br([\rmX/\rmG])_{\ell^n}$, thereby proving that
 $\Br_{\rmG}(\rmX)_{\ell^n}$ is an invariant of the quotient stack $[\rmX/\rmG]$, for any 
 prime $\ell \ne char (k)$.
\end{corollary}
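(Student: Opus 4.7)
The plan is to deduce all three parts of the corollary directly from Proposition~\ref{comp.isoms} together with Lemma~\ref{simpl.Borel} and the comparison isomorphism \cite[Theorem 1.6]{J20}. The key observation is that Proposition~\ref{comp.isoms} produces a ``common refinement'' $\EG\times_\rmG(\EG^{gm,m}\times\rmX)$ whose cohomology simultaneously agrees with both $\EG^{gm,m}\times_\rmG\rmX$ (via $p_2^*$) and with the simplicial Borel construction $\EG\times_\rmG\rmX$ (via $p_1^*$). Composing one isomorphism with the inverse of the other immediately yields
\[
\rmH^1_{et}(\EG^{gm,m}\times_\rmG\rmX,{\mathbb G}_m)\;\cong\;\rmH^1_{et}(\EG\times_\rmG\rmX,{\mathbb G}_m),\qquad
\rmH^2_{et}(\EG^{gm,m}\times_\rmG\rmX,\mu_{\ell^n}(1))\;\cong\;\rmH^2_{et}(\EG\times_\rmG\rmX,\mu_{\ell^n}(1)),
\]
for $m\gg 0$, and these are visibly functorial in the $\rmG$-scheme $\rmX$ since both $p_1$ and $p_2$ are.

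The next step is to identify the right-hand sides with cohomology of the quotient stack on the smooth site. By Lemma~\ref{simpl.Borel}, the simplicial scheme $\EG\times_\rmG\rmX$ is precisely $\cosk_0^{[\rmX/\rmG]}(\rmX)$, i.e.\ the \v{C}ech nerve of the smooth atlas $\rmX\to[\rmX/\rmG]$. Since this atlas is a smooth surjection, $\cosk_0^{[\rmX/\rmG]}(\rmX)$ is a smooth hypercover of $[\rmX/\rmG]$, so cohomology of any sheaf $\mathcal F$ on the smooth site of the stack is computed as the hypercohomology of the pullback to this simplicial scheme. Applied to $\mathcal F={\mathbb G}_m$ and $\mathcal F=\mu_{\ell^n}(1)$ (both of which are representable and hence sheaves on the smooth site), this gives
\[
\rmH^1_{smt}([\rmX/\rmG],{\mathbb G}_m)\cong \rmH^1_{et}(\EG\times_\rmG\rmX,{\mathbb G}_m),\qquad
\rmH^2_{smt}([\rmX/\rmG],\mu_{\ell^n}(1))\cong \rmH^2_{et}(\EG\times_\rmG\rmX,\mu_{\ell^n}(1));
\]
a more detailed justification is the content of \cite[Theorem 1.6]{J20}, which is invoked at the end of the proof of Proposition~\ref{comp.isoms}(ii). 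Chaining these with the previous step proves (i) and (ii).

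For (iii), I would compare the Kummer short exact sequences ~\eqref{Kummer.seq.2} for $\EG^{gm,m}\times_\rmG\rmX$ and for the stack $[\rmX/\rmG]$. Part~(i) provides the isomorphism on the $\Pic/\ell^n$ terms (since $\rmH^1_{et}(-,{\mathbb G}_m)=\Pic$), and part~(ii) provides the isomorphism on the $\rmH^2(-,\mu_{\ell^n}(1))$ terms. By the five lemma the induced map on cokernels, $\Br_\rmG(\rmX)_{\ell^n}\to \Br([\rmX/\rmG])_{\ell^n}$, is an isomorphism. Functoriality in $\rmX$ from (i) and (ii) guarantees the maps of Kummer sequences are genuine maps, so no extra work is needed.

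The main obstacle I anticipate is the cohomological descent step connecting $\EG\times_\rmG\rmX=\cosk_0^{[\rmX/\rmG]}(\rmX)$ to the smooth-site cohomology of the stack: one must know that smooth hypercovers compute smooth-site cohomology of Artin stacks for the sheaves at hand, and also that ${\mathbb G}_m$ and $\mu_{\ell^n}(1)$ give the same values whether viewed on the étale site or the smooth site of each term in the simplicial scheme. Both points are handled by invoking \cite[Theorem 1.6]{J20} rather than reproving them here; given that black box, the corollary is a short diagrammatic consequence of Proposition~\ref{comp.isoms}.
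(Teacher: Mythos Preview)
Your proposal is correct and follows essentially the same route as the paper. Two minor differences worth noting: for the descent step you invoke \cite[Theorem 1.6]{J20}, whereas the paper packages this as its own Proposition~\ref{coh.comp} (stated immediately after the corollary) applied to the atlas $\rmX\to[\rmX/\rmG]$; and for part~(iii) the paper, rather than comparing two Kummer sequences via the five lemma, works with the single Kummer sequence for $\EG\times_{\rmG}\rmX$ and shows that both $\Br_{\rmG}(\rmX)_{\ell^n}$ and $\Br([\rmX/\rmG])_{\ell^n}$ identify with the same cokernel there---but this is only a cosmetic repackaging of your argument.
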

\begin{proof}
 The first isomorphisms in both the statements (i) and (ii) are from Proposition ~\ref{comp.isoms}.
 The second isomorphisms in (i) and (ii) follow from the isomorphism of the simplicial schemes: $\EG\times_{\rmG}\rmX \cong cosk_0^{[\rmX/\rmG]}(\rmX)$ and Proposition ~\ref{coh.comp} discussed below.
  Next we consider the
  third statement.
 \vskip .1cm
 Recall the long exact sequence in \'etale cohomology obtained from the Kummer sequence:
 \begin{multline}
  \begin{split}
\label{Kummer.seq.2}
\rightarrow \H^1_{et}(\EG\times_{\rmG}{\rmX},{\mathbb G}_m) {\overset {\ell^n} \ra } \H^1_{et}(\EG\times_{\rmG}{\rmX}, {\mathbb G}_m) {\overset {\delta} \ra}  \H^2_{et}(\EG\times_{\rmG}{\rmX},\mu_{\ell^n}(1)) \rightarrow \H^2_{et}(\EG\times_{\rmG}{\rmX}, {\mathbb G}_m) \\
{\overset {\ell ^n} \rightarrow} \H^2_{et}(\EG\times_{\rmG}{\rmX}, {\mathbb G}_m) \rightarrow \cdots 
\end{split}\end{multline}
 Then the $cokernel (\H^1_{et}(\EG\times_{\rmG}{\rmX},{\mathbb G}_m) {\overset {\ell^n} \ra } \H^1_{et}(\EG\times_{\rmG}{\rmX}, {\mathbb G}_m))$ maps
 to $ \H^2_{et}(\EG\times_{\rmG}{\rmX},\mu_{\ell^n}(1))$, by a map induced by the boundary map $\delta$: we will denote this map by $\bar \delta$.
 Then, in view of the isomorphisms in (i) and (ii), the Brauer group $\Br_{\rmG}(\rmX)_{\ell^{\nu}}$ identifies with the cokernel of the map 
 $\bar \delta$.
 \vskip .1cm
 In view of Proposition ~\ref{coh.comp}, the isomorphisms in (i) and (ii) and 
 the long exact sequence ~\eqref{Kummer.seq.2}, $\Br([\rmX/\rmG])_{\ell^n}$ 
 identifies with 
 \[kernel (\rmH^2_{et}(\EG\times_{\rmG}\rmX, {\mathbb G}_m) {\overset {\ell^n} \ra} \rmH^2_{et}(\EG\times_{\rmG}\rmX, {\mathbb G}_m)).\]
 Again by Proposition ~\ref{coh.comp}, the isomorphisms in (i) and (ii) and 
 the long-exact sequence ~\eqref{Kummer.seq.2}, this identifies with 
 \[cokernel((\H^1_{et}(\EG\times_{\rmG}\rmX,{\mathbb G}_m)/\ell^n {\overset {\bar \delta} \ra } \H^2_{et}(\EG\times_{\rmG}\rmX, \mu_{\ell^n}(1))) \cong \Br_{\rmG}(\rmX)_{\ell^n}.\]
 This proves the third assertion and hence
  Theorem ~\ref{Br.grp.quot.stacks}.
\end{proof}
Let ${\it S}$ denote an algebraic stack, which we will assume is of {\it Artin type} and of finite type over the given base field $k$, 
with $x: \rmX \ra {\it S}$ an {\it atlas}, that is,
a {\it smooth surjective map from an algebraic space $\rmX$}. We let $\rmB_{\it x}{\it S} = cosk_0^{\it S}(\rmX)$ denote the corresponding simplicial
algebraic space. Then we let ${\it S}_{smt}$ denote the smooth site, whose objects are $y:\rmY \ra {\it S}$, with $y$ a smooth map from
an algebraic space $\rmY$ to ${\it S}$, and where a morphism between two such objects $y': \rmY' \ra {\it S}$ and $y:\rmY \ra {\it S}$ is
given by a map $f: \rmY' \ra \rmY$ making the triangle
\[ \xymatrix{{\rmY'} \ar@<1ex>[rr]^f \ar@<1ex>[dr]_{y'} && {\rmY} \ar@<-1ex>[dl]^{y}\\
                                &{\it S}}
\]
commute. The same definition defines the smooth site of any algebraic space. The smooth and \'etale sites of the
simplicial algebraic space $\rmB_{\it x}{\it S}$ may be defined as follows. The objects of $Smt(\rmB_{\it x}{\it S})$ are given by smooth maps
$u_n:\rmU_n \ra (\rmB_{\it x}{\it S})_n$ for some $n \ge 0$. Given such a $u_n: \rm U_n \ra \rmB_{\it x}{\it S}_n$ and $v_m: \rmV_m \ra \rmB_{\it x}S_m$,
a morphism from $u_n \ra v_m$ is a commutative square:
\vskip .1cm
\[\xymatrix{{\rmU_n} \ar@<1ex>[r]^{\alpha'} \ar@<1ex>[d]^{u_n} & {\rmV_m} \ar@<1ex>[d]^{v_m}\\
           {\rmB_{\it x}{\it S}_n} \ar@<1ex>[r] ^{\alpha} & {\rmB_{\it x}{\it S}_m}}
\]
where $\alpha$ is a structure map of the simplicial algebraic space $\rmB_{\it x}{\it S}$. The \'Etale site $Et(\rmB_{\it x}{\it S})$ is defined
similarly. An abelian sheaf $F$ on $Smt(\rmB_{\it x}{\it S})$  is given by a collection of abelian sheaves $F=\{F_n|n\}$ with each
$F_n$ being an abelian sheaf on $Smt(\rmB_{\it x}{\it S}_n)$, so that it comes equipped with the following data: for each 
structure map $\alpha: \rmB_{\it x}{\it S}_n \ra \rmB_{\it x}{\it S}_m$, one is provided with a map of sheaves $\phi_{n, m}:\alpha^*(F_m) \ra F_n$ so that 
the maps $\{\phi_{n,m}|n,m\}$ are compatible. Abelian sheaves on the site $Et(\rmB_{\it x}{\it S})$ may be defined similarly.
We skip the verification that the category of abelian sheaves on the above sites have enough injectives. 
The $n$-th cohomology group of the simplicial object $\rmB_{\it x}{\it S}$ with respect to an abelian sheaf $F$ is defined as the $n$-th right derived
functor of the functor sending
\begin{equation}
\label{coh.1}
F \mapsto kernel( \delta^0 - \delta^1: \Gamma(\rmB_{\it x}{\it S}_0, F_0) \ra \Gamma (\rmB_{\it x}{\it S}_1, F_1)).
\end{equation}
\vskip .1cm
Now we obtain the following Proposition.
\begin{proposition}
\label{coh.comp}
 Let $F$ denote an abelian sheaf on $Smt(S)$. Then we obtain the following isomorphisms:
 \vskip .1cm
 (i) $\rmH^*_{smt}(\rmB_{\it x}{\it S}, {\it x}_{\bullet}^*(F)) \cong \rmH^*_{smt}({\it S}, F)$, where the subscript $smt$ denotes cohomology computed on
 the smooth sites and $x_{\bullet}: \rmB_{\it x}{\it S} \ra {\it S}$ is the simplicial map induced by $x: \rmX \ra {\it S}$.
 \vskip .1cm
 (ii) $\rmH^*_{smt}(\rmB_{\it x}{\it S}, {\it x}_{\bullet}^*(F)) \cong \rmH^*_{et}(\rmB_{\it x}{\it S}, \alpha_*{\it x}_{\bullet}^*(F))$, where the subscript $et$ denotes
 cohomology computed on the \'etale site and $\alpha: Smt(\rmB_{\it x}{\it S}) \ra Et(\rmB_{\it x}{\it S})$ is the obvious morphism of sites.
\end{proposition}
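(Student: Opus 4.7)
The plan is to treat the two assertions separately: (i) requires cohomological descent for the smooth topology, while (ii) requires a level-wise comparison between the smooth and \'etale topologies on $\rmB_{\it x}{\it S}$.

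For part (i), I would argue as follows. Since $x\colon \rmX \ra {\it S}$ is by hypothesis a smooth surjection from an algebraic space to the Artin stack ${\it S}$, it is a covering in the smooth site $Smt({\it S})$. The simplicial algebraic space $\rmB_{\it x}{\it S} = cosk_0^{\it S}(\rmX)$ is precisely the \v{C}ech nerve of this covering and is therefore a hypercover of ${\it S}$ in the smooth topology. Cohomological descent for smooth hypercovers (the simplicial version of Saint-Donat--Deligne from SGA $4\tfrac{1}{2}$, adapted to algebraic stacks) then yields the descent spectral sequence
\begin{equation*}
E_1^{p,q} = \rmH^q_{smt}((\rmB_{\it x}{\it S})_p, (x_\bullet^* F)_p) \Longrightarrow \rmH^{p+q}_{smt}({\it S}, F),
\end{equation*}
whose abutment coincides with the cohomology of the simplicial sheaf $x_\bullet^* F$ computed as the right derived functor of the functor described in~\eqref{coh.1}. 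Matching the two descriptions of the abutment proves (i).

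For part (ii), the key device is the Leray spectral sequence associated to the morphism of sites $\alpha\colon Smt(\rmB_{\it x}{\it S}) \ra Et(\rmB_{\it x}{\it S})$ (the functor that regards an \'etale cover as a smooth cover):
\begin{equation*}
E_2^{p,q} = \rmH^p_{et}(\rmB_{\it x}{\it S}, R^q\alpha_* x_\bullet^* F) \Longrightarrow \rmH^{p+q}_{smt}(\rmB_{\it x}{\it S}, x_\bullet^* F).
\end{equation*}
It therefore suffices to show $R^q\alpha_* x_\bullet^* F = 0$ for $q > 0$. This can be checked stalkwise at geometric points of each level $(\rmB_{\it x}{\it S})_n$: the stalk computes the smooth cohomology in degree $q$ of a strict Hensel ring with coefficients in the restriction of $F_n$. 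For the sheaves of actual interest to us, namely ${\mathbb G}_m$ and $\mu_{\ell^n}(1)$ with $\ell$ invertible in $k$, these are represented by smooth commutative group schemes, and the comparison theorem between the smooth and \'etale cohomology of a smooth commutative group scheme (Grothendieck; cf.\ Hilbert's Theorem~90 for ${\mathbb G}_m$ and \cite[III.3]{Mi} more generally) forces the higher smooth cohomology groups to vanish over a strictly Henselian base. The vanishing of $R^q\alpha_*$ then gives the edge isomorphism $\rmH^p_{et}(\rmB_{\it x}{\it S}, \alpha_* x_\bullet^* F) \cong \rmH^p_{smt}(\rmB_{\it x}{\it S}, x_\bullet^* F)$ asserted in~(ii).

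The main obstacle lies in the descent step in (i). Because $\rmB_{\it x}{\it S}$ is only a simplicial \emph{algebraic space} (and ${\it S}$ is an Artin stack rather than a scheme), the standard presentations of cohomological descent in SGA~$4$ do not apply verbatim, and one must check that the cohomology of $\rmB_{\it x}{\it S}$ defined via the right derived functor of~\eqref{coh.1} really does agree with the abutment of the descent spectral sequence. I would handle this by invoking the simplicial/stacky cohomological descent machinery developed in \cite{J02} and \cite{J20}, which was already used implicitly in the proof of Proposition~\ref{comp.isoms} where the same technical issue arose (concerning the difference between the \'etale site of the simplicial object and the simplicial topology). Once that identification is granted, both (i) and (ii) follow by standard spectral sequence manipulations.
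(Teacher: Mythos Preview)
Your approach to (i) is essentially the paper's argument dressed in spectral-sequence language. The paper simply observes that since $x:\rmX\ra S$ is a smooth cover, the sheaf axiom gives
\[
\ker\bigl(\delta^0-\delta^1:\Gamma((\rmB_xS)_0,F_0)\ra\Gamma((\rmB_xS)_1,F_1)\bigr)\cong\Gamma(S,F),
\]
so the functor in~\eqref{coh.1} is literally the global-sections functor on $S$; hence their right derived functors coincide. Your descent spectral sequence is a repackaging of the same identification, and the ``obstacle'' you flag is exactly what the paper resolves by this one-line observation rather than by invoking the heavier machinery of \cite{J02,J20}.

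For (ii) there is a small gap: the proposition is stated for an \emph{arbitrary} abelian sheaf $F$ on $Smt(S)$, but you only verify the vanishing of $R^q\alpha_*$ for the particular sheaves ${\mathbb G}_m$ and $\mu_{\ell^n}(1)$. The general statement holds because every smooth cover of an algebraic space admits a section \'etale-locally, so the smooth and \'etale topoi of each $(\rmB_xS)_n$ are equivalent; this is the ``well-known result'' the paper alludes to, and it does not depend on the sheaf being representable by a smooth group scheme. Once you know this level-wise comparison, the extension to the simplicial object is formal, which is all the paper says.
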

\begin{proof}
 Observe that $x:\rmX \ra {\it S}$ is a covering of the stack $S$ in the smooth topology, so that 
 \[kernel (\delta^0 - \delta ^1: \Gamma(\rmB_{\it x}{\it S}_0, F_0) \ra \Gamma(\rmB_{\it x}{\it S}_1, F_1)) \cong \Gamma ({\it S}, F).\]
 Since $\rmH^n_{smt}({\it S}, F)$ is the $n$-th right derived functor of the above functor, in view of ~\eqref{coh.1}, we see that it identifies with
 $\rmH^n_{smt}(\rmB_{\it x}{\it S}, {\it x}_{\bullet}^*(F))$. This provides the isomorphism in (i).
 The isomorphism in (ii) is a straight-forward extension of a well-known result comparing the cohomology of an algebraic space computed on the smooth and \'etale sites.
\end{proof}

\section{Brauer groups of algebraic stacks: Proofs of Theorems ~\ref{nonequiv.to.equiv} through ~\ref{main.thm.2}}

\vskip .1cm \noindent
{\bf Proof of Theorem ~\ref{nonequiv.to.equiv}.}
\vskip .1cm

We  proved a related result in  \cite[Corollary 1.16(ii)]{JP20}, which is weaker than the above theorem in the following sense:
in \cite[Corollary 1.16(ii)]{JP20}, we assumed the cycle map for $\rmX$ is an isomorphism in all degrees, whereas, here we are
considering the more general case, where it is assumed to be an isomorphism only in degrees less than or equal to $2$. Therefore, we will
provide full details of the proof.
However, we also make strong use of the motivic and \'etale Becker-Gottlieb transfer due to the second author and Carlsson: see \cite{CJ20}.
 Invoking this transfer and \cite[Corollary 1.15]{JP20}, one observes the isomorphisms, assuming $|\rmW|$ is relatively prime to $\ell$:
\begin{align}
\label{G.to.T}
 \rmH^{*, \bullet}_{\rmG, \M}(X, Z/\ell^n) &\cong \rmH^{*, \bullet}_{\rmT, \M}(X, Z/\ell^n)^{\rmW} \mbox{and}\\
\rmH^{*, \bullet}_{\rmG, et}(X, Z/\ell^n) &\cong \rmH^{*, \bullet}_{\rmT, et}(X, Z/\ell^n)^{\rmW} \notag.
\end{align}
Therefore, we reduce to the case where $\rmG$ is replaced by a split torus $\rmT$. At this point, we observe that a choice of 
$\BT^{gm,m} = \Pi_{i=1}^n{\mathbb P}^m $, if $\rmT = {\mathbb G}_m ^n$.  
\vskip .1cm
In the above discussion, it is important to restrict to
 linear algebraic groups $\rmG$ that are special, so that the construction of the transfer can be carried out on the 
 Nisnevich site using the geometric Borel construction  $\{\EG^{gm,m}\times_{\rmG}\rmX|m \ge 0\}$
 as discussed in ~\eqref{geom.Borel}. If the group $\rmG$ is not special, then the construction of the transfer
 has to be carried out on the \'etale site using the same gadgets followed by a derived push-forward to the Nisnevich site.
 This will then not give the same Brauer group for the stack as discussed in Definition ~\ref{equiv. Brauer.grp}: see also
 Corollary ~\ref{intr.stack}.
\vskip .2cm
In view of the above reduction to the case where the group $\rmG$ is a split torus, 
following discussion now proves both statements (i) and (ii).
Observe that $\ET^{gm,m} \ra \BT^{gm,m}$ is a Zariski locally trivial torsor for the action $\rmT$ as $\rmT = {\mathbb G}_m^n$ is a
split torus, and hence is special as a linear algebraic group in the sense of Grothendieck: see \cite{Ch}. 
Taking $n=1$, we see that $\pi^m: \rmE{\mathbb G}_m^{gm,m} \ra \rmB{\mathbb G}_m^{gm,m}  $ is such a torsor, so that there is a Zariski open cover 
$\{\rmU_j|j=1, \cdots, \rmN\}$
so that $\pi^m_{|\rmU_j}$ is of the form $\rmU_j \times {\mathbb G}_m^n \ra \rmU_j$, $j=1, \cdots, \rmN$.
 \vskip .2cm
 
Let $\{\rmV_0, \cdots \rmV_m\}$ denote the open cover of ${\mathbb P}^m$ obtained by letting $\rmV_i$ be the open subscheme where 
the homogeneous coordinates $x_i$, ($i=0,\cdots, m$) on ${\mathbb P}^m$ is non-zero.
Without loss of 
generality, we may assume the $\rmU_j$ refine the open cover $\{\rmV_i|i=0, \cdots m\}$. Finally the observation that the Picard groups of
 affine spaces are trivial, shows that one may in fact take $\rmN=m$ and $\rmU_j= \rmV_j$, $j=0, \cdots, m$.
Now one may take an open cover of $\Pi_{i=1}^n{\mathbb P}^m$ by taking the product of the affine spaces that form the open cover of
 each factor ${\mathbb P}^m$. We will denote this open cover of $\Pi_{i=1}^n {\mathbb P}^m$ by $\{\rmW_{\alpha}|\alpha\}$.
\vskip .2cm
Let $p: \ET^{gm,m} {\underset {\rmT} \times}\rmX \ra \BT^{gm,m}$ denote the obvious map, and let $\epsilon$ denote the
map from the \'etale site to the Nisnevich site. For any integer $j \ge 0$, let ${\mathbb Z}/\ell^n(j)$ denote the motivic complex of weight $j$
on the Nisnevich site of $\ET^{gm,m}{\underset {\rmT} \times}X$. Then one obtains the identification (see \cite{Voev11} or \cite{HW}):
\begin{equation}
   {\mathbb Z}/\ell^n(j) = \tau_{\le j} \rmR \epsilon_* \epsilon ^*({\mathbb Z}/\ell^n(j)).  
\end{equation} 
\vskip .1cm
Therefore, on applying $\rmR p_*$, we obtain the natural maps:
\begin{align}
\label{cycl.map}
\rmR p_*({\mathbb Z}/\ell^n(j)) {\overset {\simeq} \ra} \rmR p_*(\tau_{\le j} \rmR\epsilon_* \epsilon ^*({\mathbb Z}/\ell^n(j))) &\ra \rmR p_* R\epsilon_* \epsilon^*({\mathbb Z}/\ell^n(j))\\
&\cong \rmR p_*\rmR \epsilon_*\mu_{\ell^n}(j)) \cong \rmR \epsilon_* \rmR p_* \mu_{\ell^n}(j)).\notag
\end{align} 
Next we make the following key observations:
\begin{enumerate}[\rm(i)]
\label{key.observs}
 \item On taking sections over any Zariski open subset $\rmU$ of $\BT^{gm,m}$, and cohomology in degrees $u \le v$, we obtain 
 an isomorphism: $\rmH^{\it u}_M(p^{-1}(\rmU), {\mathbb Z}/\ell^n({\it v})) {\overset {\cong} \ra} \rmH^{\it u}_{et}(p^{-1}(\rmU), \mu_{\ell^n}({\it v}))$
 where $p^{-1}(\rmU) = (\ET^{gm,m} \times_{\rmT}X){\underset {\BT^{gm,m} } \times}\rmU$. This should be clear in view of the
 map of spectral sequences:
 \begin{align}
 \label{key.observ.1}
\rmE_2^{s,t} = \rmH^s_{Nis}(U, \rmR^tp_*(\tau_{\le j} \rmR\epsilon_* \epsilon ^*({\mathbb Z}/\ell^n(j)))) & \Rightarrow \rmH^{s+t}_{Nis}(p^{-1}(U), {\mathbb Z}/\ell^n(j))\\
\rmE_2^{s,t} = \rmH^s_{Nis}(U, \rmR^tp_*( \rmR\epsilon_* \epsilon ^*({\mathbb Z}/\ell^n(j)))) & \Rightarrow \rmH^{s+t}_{Nis}(p^{-1}(U),  \rmR\epsilon_* \epsilon ^*({\mathbb Z}/\ell^n(j)))\cong \rmH^{s+t}_{et}(p^{-1}(U), \mu_{\ell^n}(j)) \notag
\end{align} 
which induces an isomorphism on the $\rmE_2$-terms for $0 \le s+t \le j$, as $s \ge 0$. 
 \item On taking the sections over each Zariski open set in the above cover $\rmW_{\alpha}$ of $\BT^{gm,m}$, we obtain a quasi-isomorphism, since affine-spaces
   are contractible for motivic cohomology and also \'etale cohomology with respect to $\mu_{\ell^n}(j)$, with $\ell \ne char (k)$, as $k$ is assumed to be 
   separably closed.
\end{enumerate}
Now a Mayer-Vietoris argument using the above open cover of $\BT^{gm,m}$ completes the proof. Since the iterated
    intersections of the affine spaces forming the open cover of $\BT^{gm,m}$ are products of an affine space with a split torus,
      one may invoke Proposition ~\ref{inductive.pf} to complete the proof of statements (i) and (ii) in the theorem.
\vskip .1cm
In order to prove the statement (iii) when $\rmX=\rmG/\rmH$, we first observe that $\EG^{gm,m}\times_{\rmG} \rmG/\rmH$ identifies with $\BH^{gm,m}$, $m>>0$.
Therefore, statement (ii) follows from Proposition ~\ref{torsion.index.prop} proven below,  once one identifies $\EG^{gm,m}\times_{\rmG}(\rmG/\rmH)$ with
  ${\rm BH}^{\rm gm,m}$.
\qed
\vskip .2cm \noindent
{\bf Proof of Corollary ~\ref{cor.1}.} We will prove (i) under the assumption the base field is separably closed.
Then, we obtain the short exact sequences from the Kummer sequence:
\begin{equation}
 \label{Kummer.seq.3}
0 \ra \Pic({\rmX})/\ell^n \ra \rmH^2_{et}({\rmX}, \mu_{\ell^n}(1)) \ra \Br({\rmX})_{\ell^n} \ra 0 \, \notag \mbox{ and }
\end{equation} 
\begin{equation}
 \label{Kummer.seq.4}
0 \ra \Pic(\EG^{gm,m}{\underset {\rmG} \times}{\rmX})/\ell^n \ra \rmH^2_{et}(\EG^{gm,m}{\underset {\rmG} \times}{\rmX}, \mu_{\ell^n}(1)) \ra \Br(\EG^{gm,m}{\underset {\rmG} \times}{\rmX})_{\ell^n} = \Br_{\rmG}({\rmX})_{\ell^n}\ra 0. \notag 
\end{equation} 
Now it suffices to observe that the maps: 
\[\Pic({\rmX})/\ell^n \cong \rmH^{2,1}_M(\rmX, {\mathbb Z}/\ell^n) \ra \rmH^2_{et}({\rmX}, \mu_{\ell^n}(1)) \mbox { and }\]
\[\Pic(\EG^{gm,m}{\underset {\rmG} \times}{\rmX})/\ell^n \cong \rmH^{2,1}_M(\EG^{gm,m}{\underset {\rmG} \times}\rmX, {\mathbb Z}/\ell^n)\ra \rmH^2_{et}(\EG^{gm,m}{\underset {\rmG} \times}{\rmX}, \mu_{\ell^n}(1)) \]
identify with the
cycle maps
\[\rmH^{2, 1}_{\M}(\rmX, Z/\ell^n) \ra \rmH^2_{et}(\rmX, \mu_{\ell^n}(1)) \mbox { and } \rmH^{2,1}_{\rmG, \M}(\rmX, Z/\ell^n) \ra \rmH^2_{\rmG, et}(\rmX, \mu_{\ell^n}(1))\]
which are both injective, with the cokernel of the first map (the second map) given by $\Br(\rmX)_{\ell^n}$ ($\Br_{\rmG}(\rmX)_{\ell^n}$, \res).
Therefore, the triviality of $\Br(\rmX)_{\ell^n}$ ($\Br_{\rmG}(\rmX)_{\ell^n}$) is equivalent to the first cycle map 
(the second cycle map, \res) being an isomorphism. Therefore, the corollary follows when the base field is separably closed.
The extension to the case when it is not is now clear, since we are still only considering the Brauer group $\Br([\rmX^s/\rmG^s])_{\ell^n}$. 
This completes the proof of (i).
\vskip .1cm 
The statement in (ii) now follows readily from (i), in view of Theorem ~\ref{Br.grp.quot.stacks}. \qed
\vskip .2cm \noindent
{\bf Proof of Theorem ~\ref{nonequiv.to.equiv.1}.} This follows immediately from Theorem ~\ref{nonequiv.to.equiv}, once one observes
the isomorphisms:
\[\rmH^{*, \bullet}_{\rmG, \M}(\rmX, Z/\ell^n) \cong  \rmH^{*, \bullet}_{\tilde \rmG, \M}(\tilde \rmG \times_{\rmG} \rmX, Z/\ell^n) \mbox{ and }\]
\[\rmH^{*}_{\rmG, et}(\rmX, \mu_{\ell^n}(\bullet)) \cong \rmH^{*}_{\tilde \rmG, et}(\tilde \rmG \times_{\rmG} \rmX, \mu_{\ell^n}(\bullet)).\]
\vskip .2cm \noindent
{\bf Proof of Theorem ~\ref{main.thm.2}.} 
We first observe that ${\rm B}{\mathbb G}_m^{\rm gm} \cong \colimn {\mathbb P}^n$. Since each ${\mathbb P}^n$ is a linear scheme which is
projective and smooth, it follows from \cite[Theorem 4.5, Corollary 4.6]{J01} that one obtains isomorphisms for any smooth scheme $\rmY$:
\begin{align}
 \label{kunneth}
\oplus_i \H_{M}^{2i, i}({\rm B}{\mathbb G}_m^{\rm gm} \times \rmY, {\mathbb Z}/\ell^n) &\cong (\oplus _i \H_{M}^{2i, i}({\rm B}{\mathbb G}_m, {\mathbb Z}/\ell^n)) \otimes (\oplus _i\H_{M}^{2i, i}(\rmY, {\mathbb Z}/\ell^n)) \mbox{ and }\\
\oplus_i \H_{et}^{2i}({\rm B}{\mathbb G}_m^{\rm gm} \times \rmY, \mu_{\ell^n}(i)) &\cong (\oplus _i \H_{et}^{2i}({\rm B}{\mathbb G}_m, \mu_{\ell^n}(i))) \otimes (\oplus _i\H_{et}^{2i}(\rmY, \mu_{\ell^n}(i))). \notag
\end{align}
Since the cycle map $cycl: \oplus _i\H_{M}^{2i, i}({\rm B}{\mathbb G}_m^{\rm gm}, {\mathbb Z}/\ell^n) \ra \oplus _i\H_{et}^{2i}({\rm B}{\mathbb G}_m^{\rm gm}, \mu_{\ell^n}(i))$ is an isomorphism, the Brauer group
$\Br({\rm Bun}_{1, d}(\rmX))_{\ell ^n}$, which is the cokernel of cycle map, identifies with $\Br({\bold Pic}^d(\rmX))_{\ell ^n}$. Finally the
isomorphism $\Br({\bold Pic}^d(\rmX))_{\ell ^n} \cong \Br({\rm Sym}^d(\rmX))_{\ell ^n}$ is proven in \cite[Theorem 1.2]{IJ20}.
\vskip .2cm
Recall that $\Br(\rmY)=0$ if $\rmY$ is a connected projective smooth variety that is {\it rational}: this follows from the well-known fact that the Brauer group
is a stable birational invariant for connected projective smooth varieties. The last statement follows from this observation.
\qed
\vskip .2cm
\begin{lemma}
 \label{isom.lemma}
 Let $p: \cX \ra \cY$ denote a map of smooth  schemes over $k$, so that it is Zariski locally trivial, with fibers given by the
  scheme $\rmX$ satisfying the condition that the cycle map:
  \[cycl: \rmH^{2,1}_M(X, {\mathbb Z}/\ell^n) \ra \rmH^{2}_{et}(X, \mu_{\ell^n}(1))\]
  is an isomorphism. Let $\rmU$, $\rmV$ denote two Zariski open subschemes of $\cY$ so that $\cX\times_{\cY}\rmU \cong \rmU \times \rmX$
  and $\cX\times_{\cY}\rmV \cong \rmV \times \rmX$. Assume that the corresponding cycle maps
  \[\rmH^{2,1}_M(\cX\times_{\cY}\rmU, {\mathbb Z}/\ell^n) \ra \rmH^{2}_{et}(\cX\times_{\cY}\rmU, \mu_{\ell^n}(1)) \mbox{ and }\rmH^{2,1}_M(\cX\times_{\cY}\rmV, {\mathbb Z}/\ell^n) \ra \rmH^{2}_{et}(\cX\times_{\cY}\rmV, \mu_{\ell^n}(1))\]
  are both isomorphisms and the cycle map
  \[\rmH^{2,1}_M(\cX\times_{\cY}(\rmU \cap \rmV), {\mathbb Z}/\ell^n) \ra \rmH^{2}_{et}(\cX\times_{\cY}(\rmU\cap \rmV), \mu_{\ell^n}(1))\]
  is a monomorphism. Then the cycle map
  \[\rmH^{2,1}_M(\cX\times_{\cY}(\rmU \cup \rmV), {\mathbb Z}/\ell^n) \ra \rmH^{2}_{et}(\cX\times_{\cY}(\rmU\cup \rmV), \mu_{\ell^n}(1))\]
is an isomorphism.
\end{lemma}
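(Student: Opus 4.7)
The approach is Mayer--Vietoris combined with the five-lemma. Write $\cX_\rmU = \cX\times_\cY\rmU$, $\cX_\rmV = \cX\times_\cY\rmV$, $\cX_{\rmU\cap\rmV}=\cX\times_\cY(\rmU\cap\rmV)$ and $\cX_{\rmU\cup\rmV}=\cX\times_\cY(\rmU\cup\rmV)$. Since motivic cohomology with $\Z/\ell^n$-coefficients satisfies Nisnevich (hence Zariski) descent, and étale cohomology does as well, the Zariski open cover $\{\cX_\rmU,\cX_\rmV\}$ of $\cX_{\rmU\cup\rmV}$ produces Mayer--Vietoris long exact sequences in both theories. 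Naturality of the cycle map then provides a commutative ladder relating them.

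I would focus on the five consecutive terms
\begin{equation*}
\rmH^{1,1}_M(\cX_\rmU)\oplus\rmH^{1,1}_M(\cX_\rmV) \to \rmH^{1,1}_M(\cX_{\rmU\cap\rmV}) \to \rmH^{2,1}_M(\cX_{\rmU\cup\rmV}) \to \rmH^{2,1}_M(\cX_\rmU)\oplus\rmH^{2,1}_M(\cX_\rmV) \to \rmH^{2,1}_M(\cX_{\rmU\cap\rmV})
\end{equation*}
of the motivic Mayer--Vietoris sequence, together with the analogous étale sequence for $\mu_{\ell^n}(1)$ beneath it, joined by vertical cycle maps. The hypotheses directly supply that the fourth vertical is an isomorphism and the fifth is injective. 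By the five-lemma, the desired isomorphism at the middle term reduces to showing that the $(1,1)$ cycle map is surjective on each of $\cX_\rmU$ and $\cX_\rmV$ (which gives the first vertical) and is an isomorphism on $\cX_{\rmU\cap\rmV}$ (the second vertical).

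For the degree-$(1,1)$ ingredients, I would use the identification $\rmH^{1,1}_M(Z,\Z/\ell^n)\cong \O^*(Z)/(\O^*(Z))^{\ell^n}$ (via Hilbert 90 in weight one) together with the étale Kummer sequence
\begin{equation*}
0 \to \O^*(Z)/(\O^*(Z))^{\ell^n} \to \rmH^1_{et}(Z,\mu_{\ell^n}(1)) \to \Pic(Z)[\ell^n] \to 0,
\end{equation*}
which identifies the $(1,1)$ cycle map with the left inclusion, always injective with cokernel $\Pic(Z)[\ell^n]$. Thus the needed surjectivity and isomorphism translate to the vanishing of $\Pic[\ell^n]$ on the three trivialized pieces $\cX_\rmU\cong\rmU\times\rmX$, $\cX_\rmV\cong\rmV\times\rmX$, and $\cX_{\rmU\cap\rmV}\cong(\rmU\cap\rmV)\times\rmX$. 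Applying a Künneth-type decomposition of the Picard group, this vanishing is controlled by $\Pic(\rmX)[\ell^n]$ and by the Picard groups of the opens of $\cY$ --- both of which are trivial in the intended use of the lemma, since in the proof of Theorem~\ref{nonequiv.to.equiv} the base $\cY=\BT^{gm,m}$ is covered by affine spaces whose pairwise intersections are products of affine spaces with split tori, all of which have trivial Picard.

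The main obstacle is precisely this degree-$(1,1)$ bookkeeping: the stated hypotheses of the lemma constrain only the bi-degree $(2,1)$ cycle map, so a careful proof must supply the $(1,1)$ input from the product structure of the trivialized pieces (via Künneth for $\Pic$) rather than read it off directly from the lemma's assumptions. Once the $(1,1)$ step is in place, the five-lemma applied to the Mayer--Vietoris ladder immediately delivers the claimed isomorphism at the middle term.
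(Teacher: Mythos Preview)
Your Mayer--Vietoris ladder together with the five-lemma is exactly the argument the paper gives. The paper writes down the same five-term diagram and then invokes its diagram-chase Lemma~\ref{diagm.chase.lemma}(ii) (a snake-lemma variant equivalent to the five-lemma here).

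The gap is in your treatment of the degree-$(1,1)$ verticals. Your identification $\rmH^{1,1}_M(Z,\Z/\ell^n)\cong \O^*(Z)/(\O^*(Z))^{\ell^n}$ is not correct: from $\Z(1)\simeq {\mathbb G}_m[-1]$ and the coefficient triangle $\Z(1)\overset{\ell^n}{\to}\Z(1)\to\Z/\ell^n(1)$ one gets
\[
0\;\to\;\O^*(Z)/\ell^n\;\to\;\rmH^{1,1}_M(Z,\Z/\ell^n)\;\to\;\Pic(Z)[\ell^n]\;\to\;0,
\]
which is the \emph{same} short exact sequence that the Kummer sequence gives for $\rmH^1_{et}(Z,\mu_{\ell^n})$. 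So the $(1,1)$ cycle map is an isomorphism for \emph{every} smooth $Z$, with no hypothesis on $\Pic(Z)[\ell^n]$ needed. The paper packages this as the observation (see ~\eqref{key.observ.1} with $j=1$) that, via $\Z/\ell^n(j)\simeq\tau_{\le j}\rmR\epsilon_*\mu_{\ell^n}(j)$, the cycle map $\rmH^{u,j}_M\to\rmH^u_{et}$ is an isomorphism whenever $u\le j$; in particular the first two vertical maps in the ladder are isomorphisms outright. Your K\"unneth and Picard-vanishing discussion is therefore an unnecessary detour built on a miscomputation; once you replace it with the correct $(1,1)$ fact, the five-lemma finishes the proof exactly as in the paper, and the argument no longer depends on the special shape of the opens in $\cY$.
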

\begin{proof}
For a subscheme $\rmW$ in $\rmY$, we will continue to let $\cX_{W} = \cX\times_{\cY}\rmW$.
Now we consider the commutative diagram with exact rows:
\[\xymatrix{{\rmH^{1,1}_{M}(\cX_{\rmU}, {\mathbb Z}/\ell^n) \oplus \rmH^{1,1}_{M}(\cX_{\rmV}, {\mathbb Z}/\ell^n)} \ar@<1ex>[r] \ar@<1ex>[d] & {\rmH^{1, 1}_M(\cX_{\rmU \cap \rmV},{\mathbb Z}/\ell^n)} \ar@<1ex>[r]  \ar@<1ex>[d]& {\rmH^{2, 1}_M(\cX_{\rmU \cup \rmV}, {\mathbb Z}/\ell^n)} \ar@<1ex>[d]\\
  {\rmH^{1}_{et}(\cX_{\rmU}, \mu_{\ell^n}(1)) \oplus \rmH^{1}_{et}(\cX_{\rmV}, \mu_{\ell^n}(1))} \ar@<1ex>[r] & {\rmH^{1}_{et}(\cX_{\rmU \cap \rmV}, \mu_{\ell^n}(1))} \ar@<1ex>[r] & {\rmH^{2}_{et}(\cX_{\rmU \cup \rmV}, \mu_{\ell^n}(1)) }}
 \]
\[\xymatrix{{} \ar@<1ex>[r]  &  {\rmH^{2,1}_{M}(\cX_{\rmU}, {\mathbb Z}/\ell^n) \oplus \rmH^{2,1}_{M}(\cX_{\rmV}, {\mathbb Z}/\ell^n)} \ar@<1ex>[d] \ar@<1ex>[r] &  {\rmH^{2, 1}_M(\cX_{\rmU \cap \rmV}, {\mathbb Z}/\ell^n)} \ar@<1ex>[d]\\
  {}  \ar@<1ex>[r] & {\rmH^{2}_{et}(\cX_{\rmU}, \mu_{\ell^n}(1)) \oplus \rmH^{2,1}_{et}(\cX_{\rmV}, \mu_{\ell^n}(1))}  \ar@<1ex>[r] & {\rmH^{2}_{et}(\cX_{\rmU \cap \rmV}, \mu_{\ell^n}(1))} }
 \]
 In view of the spectral sequence in ~\eqref{key.observ.1} with $j=1$, one may observe that the second vertical map is an isomorphism.
 Therefore, Lemma ~\ref{diagm.chase.lemma}(ii) applies to prove the required map is an isomorphism
\end{proof}

\begin{proposition}
 \label{inductive.pf}
 Let $p:\cX \ra \cY$ denote a map of smooth schemes over $k$, satisfying the hypotheses of Lemma ~\ref{isom.lemma}. We will further assume  the following:
 let $\rmU_i, i=1, \cdots, n$ denote open subsets of $\cY$, so that the hypotheses of Lemma ~\ref{isom.lemma} holds
 with $\rmU$, $\rmV$ denoting any two of these open sets. Assume further that there exists an affine space ${\mathbb A}^N$
 so that each $\rmU_i \cong {\mathbb A}^N$ and that each intersection $\rmU_{i_1} \cap \rmU_{i_2} \cong {\mathbb G}_m \times {\mathbb A}^{N-1}$.
 Then the following holds, where for a subscheme $\rmW$ in $\rmY$, we will let $\cX_{W} = \cX\times_{\cY}\rmW$, and  cycl will denote the higher cycle map:
 \begin{enumerate}[\rm(i)]
  \item  $cycl: \rmH^{2,1}_M(\cX_{(\rmU_1 \cup \cdots \cup \rmU_{n-1}) \cap \rmU_n}, {\mathbb Z}/\ell^n) \ra \rmH^2_{et}(\cX_{(\rmU_1 \cup \cdots \cup\rmU_{n-1}) \cap \rmU_n}, \mu_{\ell^n}(1))$
  is a monomorphism \mbox { and }
  \item $cycl: \rmH^{2,1}_M(\cX_{(\rmU_1 \cup \cdots \cup \rmU_{n-1}) \cup \rmU_n}, {\mathbb Z}/\ell^n) \ra \rmH^2_{et}(\cX_{(\rmU_1 \cup \cdots \cup \rmU_{n-1}) \cup \rmU_n}, \mu_{\ell^n}(1))$ is an isomorphism.
 \end{enumerate}
\end{proposition}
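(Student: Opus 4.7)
The plan is to proceed by simultaneous induction on $n \ge 2$ for both assertions (i) and (ii). The base case $n = 2$ is immediate: (i) for $n = 2$ is exactly the monomorphism hypothesis of Lemma~\ref{isom.lemma} applied to the pair $\rmU_1, \rmU_2$ (guaranteed by the standing assumption that the hypotheses of Lemma~\ref{isom.lemma} hold for any two of the $\rmU_i$), and (ii) for $n = 2$ is then its conclusion.

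For the inductive step, set $\rmU = \rmU_1 \cup \cdots \cup \rmU_{n-1}$. To deduce (ii) at stage $n$, I apply Lemma~\ref{isom.lemma} to the pair $(\rmU, \rmU_n)$: the inductive hypothesis (ii) at $n-1$ gives that cycl is an isomorphism on $\cX_{\rmU}$; cycl is an isomorphism on $\cX_{\rmU_n}$ by Künneth plus homotopy invariance since $\rmU_n \cong \mathbb{A}^N$, cycl is an iso on $\rmX$, and $p$ trivializes over $\rmU_n$; and the remaining monomorphism hypothesis on $\cX_{\rmU \cap \rmU_n}$ is precisely assertion (i) at stage $n$. Thus, modulo (i), Lemma~\ref{isom.lemma} delivers (ii).

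The heart of the argument is therefore (i). The key observation is that $\rmU \cap \rmU_n = \bigcup_{i=1}^{n-1}(\rmU_i \cap \rmU_n)$ lives entirely inside the trivializing open $\rmU_n$, so $\cX_{\rmU \cap \rmU_n} \cong (\rmU \cap \rmU_n) \times \rmX$ is globally a product. Writing $\rmW_i := \rmU_i \cap \rmU_n \cong \mathbb{G}_m \times \mathbb{A}^{N-1}$, I would run a \emph{nested} Mayer-Vietoris induction on the $\rmW_i$ inside $\rmU_n$, proving at each stage that cycl is an isomorphism on $\cX_{\rmW_{i_1} \cup \cdots \cup \rmW_{i_r}}$. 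Each stage again reduces to Lemma~\ref{isom.lemma}: cycl is an isomorphism on each individual $\cX_{\rmW_i}$ and on the iterated intersections $\rmW_{i_1} \cap \cdots \cap \rmW_{i_r}$ (which, as remarked in the proof of Theorem~\ref{nonequiv.to.equiv}, take the form $\mathbb{G}_m^k \times \mathbb{A}^{N-k}$ in the target applications), via Künneth in motivic and $\ell$-adic étale cohomology over the separably closed base, homotopy invariance, and the assumed isomorphism of cycl on $\rmX$. The monomorphism hypothesis of Lemma~\ref{isom.lemma} needed at each intermediate step is automatic from the Beilinson-Lichtenbaum isomorphism in bidegree $(1,1)$, which forces the middle vertical map of the Mayer-Vietoris diagram appearing in the proof of Lemma~\ref{isom.lemma} to be an isomorphism.

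The main obstacle I expect is bookkeeping the nested induction cleanly: the stated hypotheses of the proposition only control pairwise intersections of the $\rmU_i$, whereas the inner Mayer-Vietoris requires understanding arbitrary iterated intersections of the $\rmW_i$ inside $\rmU_n$. This is exactly why the proof of Theorem~\ref{nonequiv.to.equiv} explicitly invokes the fact that all iterated intersections of the standard affine cover of $\BT^{gm,m}$ are products of affine spaces with split tori; incorporating this structural input (either as an additional hypothesis or in the intended applications) makes each inner step an application of Lemma~\ref{isom.lemma} and ultimately yields cycl an isomorphism, hence \emph{a fortiori} a monomorphism, on $\cX_{\rmU \cap \rmU_n}$, which closes the induction.
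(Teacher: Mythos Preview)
Your reduction of (ii) to (i) via Lemma~\ref{isom.lemma} applied to $(\rmU_1\cup\cdots\cup\rmU_{n-1},\,\rmU_n)$ is exactly what the paper does. The difference lies entirely in how you handle (i), and the obstacle you flag at the end is real: your nested Mayer--Vietoris on the $\rmW_i=\rmU_i\cap\rmU_n$ aims to prove that cycl is an \emph{isomorphism} on $\cX_{(\rmU_1\cup\cdots\cup\rmU_{n-1})\cap\rmU_n}$, and for that you genuinely need control over iterated intersections $\rmW_{i_1}\cap\cdots\cap\rmW_{i_r}$, which the stated hypotheses do not provide. Your remark that the monomorphism input to Lemma~\ref{isom.lemma} is ``automatic from Beilinson--Lichtenbaum in bidegree $(1,1)$'' is also not quite right: Lemma~\ref{isom.lemma} requires the monomorphism in bidegree $(2,1)$ on the intersection, and that does not fall out of the $(1,1)$ isomorphism alone.

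The paper sidesteps all of this by not asking for an isomorphism on $\cX_{(\rmU_1\cup\cdots\cup\rmU_{n-1})\cap\rmU_n}$, only for a \emph{monomorphism}, and by using Lemma~\ref{diagm.chase.lemma}(i) rather than Lemma~\ref{isom.lemma}. For the step from $n$ to $n+1$, set $\rmW_1=(\rmU_1\cup\cdots\cup\rmU_{n-1})\cap\rmU_{n+1}$ and $\rmW_2=\rmU_n\cap\rmU_{n+1}$, so that $\rmW_1\cup\rmW_2=(\rmU_1\cup\cdots\cup\rmU_n)\cap\rmU_{n+1}$. In the Mayer--Vietoris ladder for $(\rmW_1,\rmW_2)$, the two maps in bidegree $(1,1)$ are isomorphisms unconditionally by the observation~\eqref{key.observ.1}; the map in bidegree $(2,1)$ on $\cX_{\rmW_1}\oplus\cX_{\rmW_2}$ is a monomorphism by the inductive hypothesis (i) applied to the $n$ opens $\rmU_1,\ldots,\rmU_{n-1},\rmU_{n+1}$ together with Lemma~\ref{1.intersect} for $\rmW_2\cong\mathbb G_m\times\mathbb A^{N-1}$. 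Lemma~\ref{diagm.chase.lemma}(i) then gives the monomorphism on $\cX_{\rmW_1\cup\rmW_2}$ directly, with no information whatsoever needed about $\rmW_1\cap\rmW_2$. This is why only the pairwise-intersection hypothesis suffices.
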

\begin{proof}
 We will prove these using ascending induction on $n$.  Observe that the case $n=2$ is handled by 
 Lemma ~\ref{1.intersect}. We will first consider (i).

 Assume next that (i) holds when $\rmU_i$, $i=1, \cdots, n$ are any open subsets of $\cY$ satisfying the hypotheses.
 Let $\rmU_i$, $i=1, \cdots, n, n+1$ be open subsets satisfying the hypotheses. Let $\rmW_1 = (\rmU_1 \cup \cdots \cup  \rmU_{n-1}) \cap \rmU_{n+1}$
 and let $\rmW_2 = \rmU_n \cap \rmU_{n+1}$. Then we obtain the commutative diagram:
 \[\xymatrix{{\rmH^{1,1}_{M}(\cX_{\rmW_1}, {\mathbb Z}/\ell^n) \oplus \rmH^{1,1}_{M}(\cX_{\rmW_2}, {\mathbb Z}/\ell^n)} \ar@<1ex>[r] \ar@<1ex>[d] & {\rmH^{1, 1}_M(\cX_{\rmW_1 \cap \rmW_2},{\mathbb Z}/\ell^n)} \ar@<1ex>[r]  \ar@<1ex>[d]& {\rmH^{2, 1}_M(\cX_{\rmW_1 \cup \rmW_2}, {\mathbb Z}/\ell^n)} \ar@<1ex>[d]\\
  {\rmH^{1}_{et}(\cX_{\rmW_1}, \mu_{\ell^n}(1)) \oplus \rmH^{1}_{et}(\cX_{\rmW_2}, \mu_{\ell^n}(1))} \ar@<1ex>[r] & {\rmH^{1}_{et}(\cX_{\rmW_1 \cap \rmW_2}, \mu_{\ell^n}(1))} \ar@<1ex>[r] & {\rmH^{2}_{et}(\cX_{\rmW_1 \cup \rmW_2}, \mu_{\ell^n}(1)) }}
 \]
\[\xymatrix{{} \ar@<1ex>[r]  &  {\rmH^{2,1}_{M}(\cX_{\rmW_1}, {\mathbb Z}/\ell^n) \oplus \rmH^{2,1}_{M}(\cX_{\rmW_2}, {\mathbb Z}/\ell^n)} \ar@<1ex>[d] \ar@<1ex>[r] &  {\rmH^{2, 1}_M(\cX_{\rmW_1 \cap \rmW_2}, {\mathbb Z}/\ell^n)} \ar@<1ex>[d]\\
  {}  \ar@<1ex>[r] & {\rmH^{2}_{et}(\cX_{\rmW_1}, \mu_{\ell^n}(1)) \oplus \rmH^{2,1}_{et}(\cX_{\rmW_2}, \mu_{\ell^n}(1))}  \ar@<1ex>[r] & {\rmH^{2}_{et}(\cX_{\rmW_1 \cap \rmW_2}, \mu_{\ell^n}(1))} }
 \]
 Then the inductive assumption, together with Lemma ~\ref{1.intersect} show the map $\rm\rmH^{2,1}_{M}(\cX_{\rmW_1}, {\mathbb Z}/\ell^n) \ra \rmH^{2}_{et}(\cX_{\rmW_1}, \mu_{\ell^n}(1)) $
 is a monomorphism while Lemma ~\ref{1.intersect} shows the map $\rm\rmH^{2,1}_{M}(\cX_{\rmW_2}, {\mathbb Z}/\ell^n) \ra \rmH^{2}_{et}(\cX_{\rmW_2}, \mu_{\ell^n}(1)) $ is a monomorphism.
 Observe that $\rmW_1 \cup \rmW_2 = (\rmU_1 \cup \cdots \cup \rmU_n) \cap \rmU_{n+1}$.
 In view of the spectral sequence in ~\eqref{key.observ.1} with $j=1$, one may observe that the first two vertical maps are isomorphisms.
 Therefore, now an application of Lemma ~\ref{diagm.chase.lemma}(i) then shows the cycle map $\rm\rmH^{2, 1}_M(\cX_{\rmW_1 \cup \rmW_2}, {\mathbb Z}/\ell^n) \ra \rmH^{2}_{et}(\cX_{\rmW_1 \cup \rmW_2}, \mu_{\ell^n}(1))$ is 
 a monomorphism, thereby completing the proof of (i).
 \vskip .1cm
 At this point (ii) follows readily from Lemma ~\ref{isom.lemma} by taking $\rmU = \rmU_1 \cup \cdots \cup \rmU_n$
 and $\rmV= \rmU_{n+1}$ there. Now observe that $\rmU \cap \rmV = (\rmU_1 \cup \cdots \cup \rmU_n) \cap \rmU_{n+1}$. (i) proved
 above shows that the cycle map
 \[\rmH^{2,1}_M(\cX\times_{\cY}(\rmU \cap \rmV), {\mathbb Z}/\ell^n) \ra \rmH^{2}_{et}(\cX\times_{\cY}(\rmU\cap \rmV), \mu_{\ell^n}(1))\]
  is a monomorphism. The inductive assumption now shows that 
  the cycle map
 \[\rmH^{2,1}_M(\cX\times_{\cY}\rmU, {\mathbb Z}/\ell^n) \ra \rmH^{2}_{et}(\cX\times_{\cY}\rmU, \mu_{\ell^n}(1))\]
  is an isomorphism. Therefore, the hypotheses of Lemma ~\ref{isom.lemma} are satisfied, so that Lemma ~\ref{isom.lemma} applies to complete the proof of (ii).
\end{proof}
\vskip .2cm

\begin{lemma} 
\label{1.intersect}
Assume that $\rmX$ is a smooth scheme so that the cycle map
\[cycl: \rmH^{i, 1}_M(X, {\mathbb Z}/\ell^n) \ra \rmH^i_{et}(X, \mu_{\ell^n}(1))\]
is an isomorphism for all $0 \le i \le 2$. Then 
the induced cycle map 
 $\rm\rmH^{i, 1}_M(\rmX \times {\mathbb G}_m, {\mathbb Z}/\ell^n) \ra \rmH^i_{et}(\rmX \times {\mathbb G}_m, \mu_{\ell^n}(1))$
is injective for  for all $0 \le i \le 2$.
\end{lemma}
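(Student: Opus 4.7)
The plan is to reduce the statement to the given hypotheses by establishing K\"unneth-type splittings for $\rmH^{*,1}_M(-, {\mathbb Z}/\ell^n)$ and $\rmH^*_{et}(-, \mu_{\ell^n}(1))$ of $\rmX \times {\mathbb G}_m$, and then to analyze the resulting direct summands degree by degree. To obtain these splittings, I would invoke the Gysin long exact sequence associated to the closed immersion $\rmX \times \{0\} \hookrightarrow \rmX \times {\mathbb A}^1$ of pure codimension one, combined with ${\mathbb A}^1$-homotopy invariance in motivic cohomology and its \'etale analogue with $\mu_{\ell^n}(j)$-coefficients (valid since $\ell \ne char(k)$). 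This yields short exact sequences that split via the unit section $\rmX \cong \rmX \times \{1\} \hookrightarrow \rmX \times {\mathbb G}_m$, producing the canonical direct sum decompositions
\begin{align*}
\rmH^{i,1}_M(\rmX \times {\mathbb G}_m, {\mathbb Z}/\ell^n) &\cong \rmH^{i,1}_M(\rmX, {\mathbb Z}/\ell^n) \oplus \rmH^{i-1,0}_M(\rmX, {\mathbb Z}/\ell^n), \\
\rmH^{i}_{et}(\rmX \times {\mathbb G}_m, \mu_{\ell^n}(1)) &\cong \rmH^{i}_{et}(\rmX, \mu_{\ell^n}(1)) \oplus \rmH^{i-1}_{et}(\rmX, {\mathbb Z}/\ell^n).
\end{align*}

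Next I would invoke naturality of the Gysin sequences and of the cycle map to identify the cycle map on $\rmX \times {\mathbb G}_m$ with the direct sum of two cycle maps on $\rmX$, one in weight one and degree $i$, the other in weight zero and degree $i-1$. The first summand is injective (in fact an isomorphism) for $0 \le i \le 2$ by the given hypothesis. For the second summand I would use the standard computation that Bloch's higher Chow groups satisfy $\CH^0(\rmX, n) = 0$ for $n > 0$, so that $\rmH^{p,0}_M(\rmX, {\mathbb Z}/\ell^n) = 0$ for $p > 0$, while $\rmH^{0,0}_M(\rmX, {\mathbb Z}/\ell^n) \cong {\mathbb Z}/\ell^n[\pi_0(\rmX)] \cong \rmH^0_{et}(\rmX, {\mathbb Z}/\ell^n)$ via the natural cycle map. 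Thus for $i = 0$ and $i = 2$ the source of the weight zero cycle map vanishes (so injectivity is automatic), and for $i = 1$ the weight zero cycle map is precisely the evident identification. Assembling the two summands yields injectivity of the cycle map for $\rmX \times {\mathbb G}_m$ in all degrees $0 \le i \le 2$.

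The principal technical point requiring justification is the compatibility of the Gysin-induced splittings with the cycle map. This reduces to the naturality of the purity isomorphisms in motivic and \'etale cohomology (and to the naturality of the cycle map itself), both of which are by now standard. Moreover, both splittings are induced by the same geometric data, namely the projection $\rmX \times {\mathbb G}_m \ra \rmX$ together with the unit section of ${\mathbb G}_m$, which makes the compatibility of the cycle map with the direct sum decomposition essentially formal. Beyond that check, the only substantive ingredient is the vanishing of weight zero motivic cohomology in positive degrees, which is elementary.
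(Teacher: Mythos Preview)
Your argument is correct and rests on the same localization/Gysin sequence for $\rmX \times \{0\} \hookrightarrow \rmX \times {\mathbb A}^1$ and the same vanishing $\rmH^{p,0}_M(\rmX, {\mathbb Z}/\ell^n) = 0$ for $p > 0$ that the paper uses. The only difference is packaging: you split the sequence via the unit section $\rmX \times \{1\} \hookrightarrow \rmX \times {\mathbb G}_m$ to obtain a K\"unneth decomposition and then check the two summands directly, whereas the paper leaves the sequence unsplit and deduces injectivity for $i=2$ from the commutative ladder of localization sequences via a five-lemma style diagram chase. Your organization is slightly cleaner but not materially different.
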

\begin{proof} 
In view of the observation ~\eqref{key.observ.1} above, the above cycle map is an isomorphism for $i=0$ or $i=1$. Therefore, it suffices
to consider the case $i=2$.
  This follows from the commutative diagram 
 of localization sequences:
 \[\xymatrix{{\rmH^{2,1}_{\rmX \times \{0\} ,M}(\rmX \times {\mathbb A}^1, {\mathbb Z}/\ell^n)} \ar@<1ex>[r] \ar@<1ex>[d] & {\rmH^{2, 1}_M(\rmX \times {\mathbb A}^1,{\mathbb Z}/\ell^n)} \ar@<1ex>[r]  \ar@<1ex>[d]& {\rmH^{2, 1}_M(\rmX \times {\mathbb G}_m, {\mathbb Z}/\ell^n)} \ar@<1ex>[d]\\
  {\rmH^{2}_{\rmX \times \{0\}, et}(\rmX \times {\mathbb A}^1, \mu_{\ell^n}(1))} \ar@<1ex>[r] & {\rmH^{2}_{et}(\rmX \times {\mathbb A}^1, \mu_{\ell^n}(1))} \ar@<1ex>[r] & {\rmH^{2}_{et}(\rmX \times {\mathbb G}_m, \mu_{\ell^n}(1)) }}
 \]
\[\xymatrix{{} \ar@<1ex>[r]  & {\rmH^{3,1}_{\rmX \times \{0\}, M}(\rmX  \times {\mathbb A}^1, {\mathbb Z}/\ell^n)} \ar@<1ex>[d] \ar@<1ex>[r] & {\rmH^{3, 1}_M(\rmX \times {\mathbb A}^1, {\mathbb Z}/\ell^n)} \ar@<1ex>[d]\\
  {}  \ar@<1ex>[r] & {\rmH^{3}_{\rmX \times \{0\}, et}(\rmX  \times {\mathbb A}^1, \mu_{\ell^n}(1))}  \ar@<1ex>[r] & {\rmH^{3}_{et}(\rmX \times {\mathbb A}^1, \mu_{\ell^n}(1))} }
 \]
 The map $\rm\rmH^{3,1}_{\rmX \times \{0\}, M}(\rmX  \times {\mathbb A}^1, {\mathbb Z}/\ell^n) \ra \rmH^{3}_{\rmX \times \{0\}, et}(\rmX  \times {\mathbb A}^1, \mu_{\ell^n}(1))$
 identifies with the map 
 \[\rmH^{1,0}_{M}(\rmX \times  \{0\}, {\mathbb Z}/\ell^n) \ra \rmH^{1}_{et}(\rmX \times \{0\}, \mu_{\ell^n}(0))\]
 and $\rm\rmH^{1,0}_{M}(\rmX \times  \{0\}, {\mathbb Z}/\ell^n) \cong C\rmH^0(\rmX \times \{0\}, {Z}/\ell^n; -1) \cong 0$.
 Therefore this map is clearly injective. The map $\rm\rmH^{2,1}_{\rmX \times \{0\}, M}(\rmX  \times {\mathbb A}^1, {\mathbb Z}/\ell^n) \ra \rmH^{2}_{\rmX \times \{0\}, et}(\rmX  \times {\mathbb A}^1, \mu_{\ell^n}(1))$
 identifies with the map 
 \[\rmH^{0,0}_{M}(\rmX \times  \{0\}, {\mathbb Z}/\ell^n) \ra \rmH^{0}_{et}(\rmX \times \{0\}, \mu_{\ell^n}(0))\]
which is also an isomorphism. Now the required assertion follows from the following lemma.
\end{proof}
\begin{lemma}
 \label{diagm.chase.lemma}
 Consider the commutative diagram
 \[\xymatrix{{A'} \ar@<1ex>[r]^{f'} \ar@<1ex>[d]^{\alpha} & {B'} \ar@<1ex>[r]^{g'} \ar@<1ex>[d]^{\beta} & {C'} \ar@<1ex>[r]^{h'} \ar@<1ex>[d]^{\gamma} & {D'} \ar@<1ex>[r] \ar@<1ex>[d]^{\delta} &{E'} \ar@<1ex>[d]^{\eta}\\
             {A} \ar@<1ex>[r]^{f}  & {B} \ar@<1ex>[r]^{g}  & {C} \ar@<1ex>[r]^h  & {D} \ar@<1ex>[r]  &{E} }
   \]
with {\it exact} rows. Then the following hold:
\begin{enumerate}[\rm(i)]
 \item If $\alpha$ and $\beta$ are isomorphisms and $\delta$ is a monomorphism, then the map $\gamma$ is also a monomorphism.
 \item If $\alpha$ is an epimorphism and $\eta$ is a monomorphism, then 
 \[kernel(\beta) \ra kernel(\gamma) \ra kernel( \delta) \ra cokernel(\beta) \ra cokernel (\gamma) \ra cokernel( \delta)\]
 is exact. In particular, if  $\alpha$ is an epimorphism, $\eta$ is a monomorphism and both $\beta$ and $\delta$ are isomorphisms, then so is $\gamma$.
\end{enumerate}
\end{lemma}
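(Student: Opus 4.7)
My plan is to handle (i) and (ii) separately, each by a standard diagram chase; (i) is a short chase, while (ii) is essentially the snake-lemma recipe adapted to five-term rows.

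For part (i), I would chase an element $c' \in \ker \gamma$ first rightward and then leftward. Pushing $c'$ to $D'$ and then down via $\delta$ yields $0$, and the hypothesis that $\delta$ is a monomorphism forces $h'(c') = 0$. By exactness at $C'$, write $c' = g'(b')$; pushing $b'$ down by $\beta$ and then forward by $g$ gives $0$, so exactness at $B$ produces $a \in A$ with $\beta(b') = f(a)$. Here is where the assumption that $\alpha$ is an isomorphism enters: lift $a$ to $a' \in A'$ using surjectivity of $\alpha$, and the injectivity of $\beta$ combined with commutativity then forces $b' = f'(a')$, whence $c' = g'(f'(a')) = 0$ by exactness of the top row.

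For part (ii), I would build the six-term sequence by the standard snake-lemma recipe. The maps $\ker \beta \to \ker \gamma \to \ker \delta$ are the restrictions of $g'$ and $h'$, while $\operatorname{coker} \beta \to \operatorname{coker} \gamma \to \operatorname{coker} \delta$ are induced by $g$ and $h$; both are well defined without any hypothesis on the verticals. The connecting map $\partial \colon \ker \delta \to \operatorname{coker} \beta$ is the snake map: given $d' \in \ker \delta$, the vanishing of its further image in $E$ together with $\eta$ being a monomorphism produces a lift $c' \in C'$ with $h'(c') = d'$; then $h(\gamma(c')) = \delta(d') = 0$ places $\gamma(c')$ in $\operatorname{image}(g)$, and we set $\partial(d') = [b]$ with $g(b) = \gamma(c')$. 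The hypothesis that $\alpha$ is an epimorphism is precisely what makes $\partial$ independent of the two choices of $c'$ and $b$, exactly as in the classical snake lemma.

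With the maps defined, exactness of the six-term sequence at each of the four middle spots is a routine diagram chase of snake-lemma type, invoking one of the two hypotheses at most once per chase. The ``in particular'' statement then follows because $\beta$ and $\delta$ being isomorphisms makes the flanking kernels and cokernels vanish, forcing $\ker \gamma = 0 = \operatorname{coker} \gamma$ and hence $\gamma$ an isomorphism. The only genuine obstacle I anticipate is keeping the snake-lemma bookkeeping clean across the four exactness verifications; no deeper input beyond the two stated hypotheses is needed.
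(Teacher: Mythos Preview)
Your proposal is correct and matches the paper's own argument essentially step for step: for (i) the paper performs precisely the diagram chase you describe (push $c'$ right, use $\delta$ mono, pull back along $g'$, push down, use exactness at $B$, lift via $\alpha$, and conclude via injectivity of $\beta$); for (ii) the paper simply invokes the Snake Lemma by citing \cite[Snake Lemma 1.6]{Iver}, so your outlined construction of the connecting map and the four exactness checks is in fact more detailed than what the paper provides.
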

\begin{proof} The proof of the first statement is a  straight-forward diagram-chase, making strong use of the fact $\alpha$ and
 $\beta$ are isomorphisms and $\delta$ is a monomorphism. Here is a an outline of a proof. Let $c' \eps C'$ be such that
 $\gamma(c')=0$. Then $\delta (h'(c')) =h(\gamma(c')) =0$. As $\delta$ is assumed to be a monomorphism, it follows $h'(c')=0$.
 By the exactness of the top row, there exists a $b' \eps B'$ so that $g'(b') = c'$. Now $ g(\beta(b')) = \gamma(g'(b')) = \gamma(c')=0$,
 so that there exists an $a \eps A$ so that $f(a) = \beta(b')$. But as both $\alpha $ and $\beta$ are isomorphism, there exists
 an $a' \eps A'$ so that $\alpha (a') = a$ and $f'(a') = b'$. But, then by the exactness of the top row, $c'=g'(b')=g'(f'(a'))=0$.
 Thus $\gamma$ must be a monomorphism, which proves the first statement.
 The second statement is a variant of the Snake Lemma: see, \cite[Snake Lemma 1.6]{Iver}.
\end{proof}

\vskip .2cm
We next recall the definition of the {\it torsion index} of connected linear algebraic groups from \cite[section 1]{Tot05}.
(Observe that since we assume the base field is separably closed, all linear algebraic groups we consider are split.)
Let $\rmH$ denote a fixed connected linear algebraic group with a chosen Borel subgroup $\rmB$ and a chosen maximal torus $\rmT \subseteq \rmB$.
Let $\rmN$ denote the dimension of $\rmH/\rmB$. For a linear algebraic group $\rmG$, we will let $\BG^{\rm gm}$ denote
$\BG^{\rm gm,m}$, for some $m>>0$.
\vskip .2cm
Next consider the diagram $\rmH/\rmB \ra  \BB^{\rm gm}  {\overset f \ra}  \BH^{\rm gm}$, where  $f$ denotes
the obvious map induced by the inclusion $\rmB \subseteq \rmH$. Observe that $\BB^{\rm gm}\simeq \BT^{\rm gm}$, where
 $\simeq$ denotes a weak-equivalence in the motivic homotopy category.
Then there exists a class $a \eps {\rm CH}^{\rm N}(\BB^{\rm gm}, {\rm Z}/\ell^n) (\cong \H^{2N, N}_M(\BB^{\rm gm}, {\mathbb Z}/\ell^n)$) so that $f_*(a) = t(\rmH) \in
 {\rm CH}^0(\BH^{\rm gm}, {\rm Z}/\ell^n) \cong {\rm Z}/\ell^n$. The class $t(\rmH)$ is the torsion index of $\rmH$.
\vskip .1cm
Next we consider the following diagram that commutes when the top and bottom rows denote maps going in the same direction:
\begin{equation}
\label{comm.square}
\xymatrix{{\H_{M}^{*, \bullet}(\BB^{\rm gm}, {\mathbb Z}/\ell^n)} \ar@<1ex>[r]^{f_*} \ar@<1ex>[d]^{cycl}_{\cong} & {\H_{M}^{*, \bullet}(\BH^{\rm gm}, {\mathbb Z}/\ell^n)} \ar@<1ex>[l]^{f^*} \ar@<1ex>[d]^{cycl}\\
            {\H_{et}^{*}(\BB^{\rm gm}, \mu_{\ell^n}(\bullet))} \ar@<1ex>[r]^{\bar f_*}  & {\H_{et}^{*}(\BH^{\rm gm}, \mu_{\ell^n}(\bullet))} \ar@<1ex>[l]^{\bar f^*}}
\end{equation}
In view of the fact that the cycle map is an isomorphism in the left column, and since the cycle map $\H_M^{0,0}(\BH^{\rm gm}, {\mathbb Z}/\ell^n) \ra \H_{et}^{0}(\BH^{\rm gm}, \mu_{\ell^n}(0)) \cong {\rm Z}/\ell^n$ is also
an isomorphism, one may define the torsion index similarly by starting with $\bar a =cycl(a)$.
 \vskip .1cm
 \begin{proposition} (See \cite[section 1]{Tot05}.)
  \label{torsion.index.prop}
The kernel and cokernel of the cycle map 
\[cycl:\H^{*, \bullet}_M(\BH^{\rm gm}, {\mathbb Z}/\ell^n) \ra H_{et}^*(\BH^{\rm gm}, \mu_{\ell^n}), \]
 as well as the kernel of the restriction map 
 \[ f^*: \H^{*, \bullet}_M(\BH^{\rm gm}, {\mathbb Z}/\ell^n) \ra H^{*, \bullet}_M(\BB^{\rm gm}, {\mathbb Z}/\ell^n)\]
 are killed by $t(\rmH)$.
  \end{proposition}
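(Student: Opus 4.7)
The plan is to deduce all three statements from the single projection formula identity $f_*(f^*(x) \cdot a) = x \cdot f_*(a) = t(\rmH) \cdot x$, applied in motivic and \'etale cohomology respectively. Since $f: \BB^{\rm gm} \to \BH^{\rm gm}$ is a Zariski-locally trivial $\rmH/\rmB$-bundle with proper smooth fiber of dimension $N$, one has a well-defined proper push-forward $f_*$ (of degree $-2N$ in cohomological degree and $-N$ in weight), compatible with cup product via the projection formula, and this construction is functorial with respect to the cycle map. Both of these structural properties are standard for motivic and \'etale cohomology of ind-schemes obtained as finite-dimensional approximations of classifying spaces, so I will invoke them without proof.

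First I would handle the kernel of the restriction map $f^*$. If $x \in \H^{*,\bullet}_M(\BH^{\rm gm}, {\mathbb Z}/\ell^n)$ satisfies $f^*(x)=0$, then
\[
t(\rmH)\cdot x \;=\; x \cdot f_*(a) \;=\; f_*(f^*(x) \cdot a) \;=\; f_*(0)\;=\;0,
\]
so $t(\rmH)$ kills $\ker(f^*)$. Next, for $\ker(cycl)$, observe that the left-hand vertical cycle map in diagram~\eqref{comm.square} is an isomorphism (since $\BB^{\rm gm}\simeq \BT^{\rm gm}$ and the cycle map is an isomorphism for classifying spaces of split tori). Hence if $cycl(y)=0$, then $cycl(f^*(y)) = \bar f^*(cycl(y))=0$, forcing $f^*(y)=0$; the previous step then gives $t(\rmH)\cdot y = 0$.

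For the cokernel of $cycl$, let $z \in \H^{*}_{et}(\BH^{\rm gm}, \mu_{\ell^n}(\bullet))$. Since the cycle map on $\BB^{\rm gm}$ is an isomorphism, there is a unique $w \in \H^{*,\bullet}_M(\BB^{\rm gm}, {\mathbb Z}/\ell^n)$ with $cycl(w) = \bar f^*(z)$. Setting $\bar a = cycl(a)$ and using that the cycle map commutes with proper pushforward as well as cup product, together with the projection formula on the \'etale side,
\[
cycl\bigl(f_*(w\cdot a)\bigr) \;=\; \bar f_*\bigl(cycl(w)\cdot \bar a\bigr) \;=\; \bar f_*\bigl(\bar f^*(z)\cdot \bar a\bigr) \;=\; z \cdot \bar f_*(\bar a).
\]
Because the cycle map is an isomorphism on $\H^{0,0}_M(\BH^{\rm gm}, {\mathbb Z}/\ell^n)$ and commutes with $f_*$ and $\bar f_*$ on the class $a$, one has $\bar f_*(\bar a) = cycl(f_*(a)) = cycl(t(\rmH)) = t(\rmH)$, so $t(\rmH)\cdot z$ lies in the image of $cycl$.

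The main obstacle I anticipate is not the algebra but the bookkeeping needed to justify the existence of a well-behaved proper pushforward $f_*$ and the validity of the projection formula at the level of the Borel construction, together with the compatibility of $cycl$ with $f_*$ on the distinguished class $a$. Once those are granted, each of the three assertions is a short diagram chase as above.
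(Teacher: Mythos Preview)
Your proof is correct and follows essentially the same argument as the paper: both use the projection formula $f_*(f^*(x)\cdot a) = t(\rmH)\cdot x$ to kill $\ker(f^*)$, deduce the kernel of $cycl$ from this via the isomorphism of the cycle map on $\BB^{\rm gm}$, and handle the cokernel by pulling an \'etale class back to $\BB^{\rm gm}$, lifting it along the cycle map, and pushing forward. Your explicit acknowledgment of the need to justify $f_*$ and the projection formula at the level of the ind-scheme approximations is a point the paper leaves implicit.
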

\begin{proof}
Define a map $\alpha: \CH^i(\BB^{\rm gm}, {\mathbb Z}/\ell^n) \cong \H^{2i,i}_M(\BB^{\rm gm}, {\mathbb Z}/\ell^n) \ra \CH^i(\BH^{\rm gm}, {\mathbb Z}/\ell^n ) \cong \H^{2i,i}_M(\BB^{\rm gm}, {\mathbb Z}/\ell^n)$
by $\alpha(x) = f_*(a.x)$. Then, $\alpha (f^*(x)) = f_*(a. f^*(x)) = f_*(a).x = t(\rmH).{\it x}$. As $\BT^{\rm gm}$ identifies with $\BB^{\rm gm}$,
the map $f^*$ identifies with the restriction homomorphism $res:\H^{*, \bullet}_M(\BH^{\rm gm}, {\mathbb Z}/\ell^n) \ra \H^{*, \bullet}_M(\BT^{\rm gm}, {\mathbb Z}/\ell^n)$,
thereby proving that its kernel is killed by the class $t(\rmH)$. In view of the fact that cycle map forming the left vertical map 
in ~\eqref{comm.square} is an isomorphism, it follows that the kernel of the cycle map
\begin{equation}
\label{cycl}
cycl: {\H_{M}^{*, \bullet}(\BH^{\rm gm}, {\mathbb Z}/\ell^n)} \ra {\H_{et}^{*}(\BH^{\rm gm}, \mu_{\ell^n}(\bullet))}
\end{equation}
is contained in the kernel of $f^*$, and hence is killed by the  class $t(\rmH)$.
\vskip .1cm
We next show the cokernel of the cycle map in ~\eqref{cycl} is also killed by the class $t(\rmH)$. Therefore, let $\bar x \in
{\H_{et}^{*}(\BH^{\rm gm}, \mu_{\ell^n}(\bullet))}$ denote a class. Then
\[t(\rmH). \bar {\it x}= \bar f_*(cycl(a). \bar f^*(\bar {\it x})) = \bar f_*(cycl({\it y})) = cycl(\bar f_*({\it y}))\]
for some class ${\it y} \in {\H_{M}^{*, \bullet}(\BB^{\rm gm}, {\mathbb Z}/\ell^n)}$. This shows the cokernel of the cycle map
in ~\eqref{cycl} is also killed by the class $t(\rmH)$, thereby completing the proof of the Proposition.
\end{proof}

\vskip .2cm
\section{Examples}
In this section, we discuss various examples making use of the techniques developed in the last section. We remind the reader that 
the base field $k$ is assumed to be separably closed, throughout.
\begin{example} The Brauer groups of the classifying spaces of connected linear algebraic groups.
\label{eg.1}
 \begin{theorem} Let $\rmH$ denote a connected linear algebraic group over $k$. Then $\Br(\BH)_{\ell^n} =0$ for any $\ell$ relatively prime
 to the torsion index $t(\rmH)$.
 \end{theorem}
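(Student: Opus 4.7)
My plan is to reduce the statement to the vanishing of the cokernel of the cycle map at bidegree $(2,1)$ on $\BH^{gm}$, and then invoke the torsion-index estimate from Proposition~\ref{torsion.index.prop} combined with a divisibility argument.

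First I would make the following identifications. By Definition~\ref{Br.grp.stacks} and Theorem~\ref{Br.grp.quot.stacks} applied to $\rmX = \oSpec k$ with the trivial $\rmH$-action, one has
\[
\Br(\BH)_{\ell^n} \;\cong\; \Br_{\rmH}(\oSpec k)_{\ell^n} \;=\; \Br(\BH^{gm,m})_{\ell^n}
\]
for $m \gg 0$ (independence of the choice of admissible gadget being taken for granted). The Kummer short exact sequence in ~\eqref{Kummer.seq.2} then exhibits this group as the cokernel of the cycle map
\[
cycl: \H^{2,1}_{M}(\BH^{gm,m}, {\mathbb Z}/\ell^n) \;\lrar\; \H^{2}_{et}(\BH^{gm,m}, \mu_{\ell^n}(1)).
\]

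The second step is the key input: Proposition~\ref{torsion.index.prop} asserts that the cokernel of $cycl$ in every bidegree is annihilated by the torsion index $t(\rmH)$. Specializing to bidegree $(2,1)$ shows that $\Br(\BH)_{\ell^n}$ is killed by $t(\rmH)$. On the other hand, this group is also killed by $\ell^n$, since it is a subquotient of the $\ell^n$-torsion group $\H^2_{et}(\BH^{gm,m}, \mu_{\ell^n}(1))$. The coprimality hypothesis $\gcd(t(\rmH), \ell^n) = 1$ then forces $\Br(\BH)_{\ell^n} = 0$, as required.

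I do not anticipate any serious obstacle in this argument: the entire geometric content has already been packaged into Proposition~\ref{torsion.index.prop} (which rests on the Becker--Gottlieb transfer and the construction of the class $a$ with $f_*(a) = t(\rmH)$), and the remainder is purely a divisibility argument. The only point requiring a minor verification is that the stalk-wise and cover-wise hypotheses underlying the identification $\Br(\BH) = \Br(\BH^{gm,m})_{\ell^n}$ are indeed in force for $\rmX = \oSpec k$; but this is essentially immediate once one notes that $\oSpec k$ is smooth of finite type over $k$ so that all the machinery of Section~2 applies verbatim.
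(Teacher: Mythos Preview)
Your proposal is correct and follows essentially the same route as the paper. The paper's proof invokes Theorem~\ref{Br.grp.quot.stacks} and then Theorem~\ref{nonequiv.to.equiv}(iii), but the latter is itself proved by identifying $\EG^{gm,m}\times_{\rmG}(\rmG/\rmH)$ with $\BH^{gm,m}$ and appealing to Proposition~\ref{torsion.index.prop}; you simply bypass the intermediate reference and go straight to Proposition~\ref{torsion.index.prop} plus the coprimality/divisibility argument, which is exactly the content underlying the paper's argument. (One small inaccuracy in your parenthetical: Proposition~\ref{torsion.index.prop} is proved via the push-forward $f_*$ and the projection formula, not the Becker--Gottlieb transfer; the transfer is used elsewhere, in the reduction from $\rmG$ to $\rmT$ in Theorem~\ref{nonequiv.to.equiv}(i)--(ii).)
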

 \begin{proof} First we invoke Theorem ~\ref{Br.grp.quot.stacks} to obtain the isomorphism $\Br(\BH)_{\ell^n} \cong \Br(\BH^{gm,m})$, $m>>0$.
 Next we invoke Theorem ~\ref{nonequiv.to.equiv}(iii), after identifying $\EG^{\rm gm,m}\times_{\rmG}(\rmG\times_{\rmH}\rmX)$ with
  ${\rm EH}^{\rm gm,m}\times_{\rmH}\rmX$, (with $\rmX = Spec \, {\it k}$) where $\rmG$ is a bigger group,  and containing $\rmH$ as a closed subgroup.
 \end{proof}
As examples, the torsion index for ${\rm GL}_n$ and ${\rm SL}_n$ are both $1$. The torsion index for ${\rm Sp}_{2n}$ and ${\rm Sp}_{2n+1}$ are powers of $2$,
 and the same holds for the orthogonal groups. The torsion indices for other classical groups are divisible only by the primes $2, 3, 5$.
 See \cite{Bor} for more details.
\end{example}
\begin{example} The Brauer group of the moduli stack of elliptic curves. Then we obtain the following result, which is
a restatement of Theorem ~\ref{mod.ell.curves}.
 \label{eg.1.1}
 \begin{theorem} 
 \label{ell.curves}
 Let ${\mathcal M}_{1,1}$ denote the moduli stack of elliptic curves over the base field $k$. Assume that
  $char(k) \ne 2,3$ and $\ell$ is a prime different from $char(k)$. Then $\Br({\mathcal M}_{1,1})_{\ell^n}=0$.
 \end{theorem}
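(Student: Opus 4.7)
The plan is to combine the Olsson presentation of $\mathcal{M}_{1,1}$ with the stack-to-scheme reduction results proved earlier in the paper, so that the problem collapses to computing the Brauer group of a single concrete smooth affine surface.

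First I would recall Olsson's description: since $\mathrm{char}(k)\ne 2,3$, every elliptic curve over $k$ admits a short Weierstrass form $y^2=x^3+ax+b$ with $4a^3+27b^2\ne 0$, and the rescaling $(x,y)\mapsto(\lambda^2 x,\lambda^3 y)$ induces the action $\lambda\cdot(a,b)=(\lambda^4 a,\lambda^6 b)$. This yields an isomorphism of stacks
\[
\mathcal{M}_{1,1}\;\cong\;[U/\mathbb{G}_m],\qquad U:=\mathbb{A}^2_k\setminus V(4a^3+27b^2).
\]
Theorem~\ref{Br.grp.quot.stacks} then identifies $\Br(\mathcal{M}_{1,1})_{\ell^n}$ with $\Br_{\mathbb{G}_m}(U)_{\ell^n}$, and, since $\mathbb{G}_m$ is a torus, Corollary~\ref{cor.1}(i) (resting on Theorem~\ref{nonequiv.to.equiv}(i)) reduces the problem to the single vanishing $\Br(U)_{\ell^n}=0$.

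To establish this vanishing I would use the Kummer short exact sequence. Let $\Delta:=V(4a^3+27b^2)\subset\mathbb{A}^2$ denote the discriminant curve. Because $\Delta$ is an irreducible principal divisor on $\mathbb{A}^2$, the surjection $\mathrm{Pic}(\mathbb{A}^2)\twoheadrightarrow\mathrm{Pic}(U)$ forces $\mathrm{Pic}(U)=0$, so $\Br(U)_{\ell^n}$ embeds into $H^2_{et}(U,\mu_{\ell^n}(1))$ and it is enough to show the latter group vanishes. Feeding $\mu_{\ell^n}(1)$ into the excision/Gysin long exact sequence for $\Delta\hookrightarrow\mathbb{A}^2$ with complement $U$, and using the vanishing $H^i_{et}(\mathbb{A}^2_k,\mu_{\ell^n}(j))=0$ for $i>0$ ($\mathbb{A}^1$-homotopy invariance over the separably closed base) together with the semi-purity $H^i_\Delta(\mathbb{A}^2,\mu_{\ell^n}(1))=0$ for $i<2$, the sequence collapses to the isomorphism
\[
H^2_{et}(U,\mu_{\ell^n}(1))\;\cong\;H^3_\Delta(\mathbb{A}^2,\mu_{\ell^n}(1)).
\]

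The main obstacle is that $\Delta$ is singular at the origin, so Gabber's absolute purity does not apply directly to compute the right-hand side. I would circumvent this by observing that $\Delta$ is a cuspidal cubic, hence geometrically unibranch, and that its normalization $\mathbb{A}^1_k\to\Delta$, parametrized up to scale by $t\mapsto(-3t^2,2t^3)$, is a finite birational bijection inducing isomorphisms on residue fields --- that is, a universal homeomorphism. By topological invariance of the \'etale site this identifies $H^*_{et}(\Delta,\mathbb{Z}/\ell^n)$ with $H^*_{et}(\mathbb{A}^1_k,\mathbb{Z}/\ell^n)$, which vanishes in positive degrees because $k$ is separably closed and $\ell\ne\mathrm{char}(k)$. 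Combining this with the fact that purity does apply on the smooth locus $\Delta\setminus\{0\}$ (so that $Ri^!(\mu_{\ell^n}(1))$ is concentrated in degrees $\ge 2$ everywhere and agrees with $\mathbb{Z}/\ell^n[-2]$ generically), one obtains $H^3_\Delta(\mathbb{A}^2,\mu_{\ell^n}(1))=0$; feeding this back gives $\Br(U)_{\ell^n}=0$ and hence $\Br(\mathcal{M}_{1,1})_{\ell^n}=0$.
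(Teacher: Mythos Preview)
Your overall strategy matches the paper's exactly through the reduction to $H^3_{\Delta}(\mathbb{A}^2,\mu_{\ell^n}(1))=0$: the Olsson presentation, the appeal to Theorem~\ref{Br.grp.quot.stacks} and Corollary~\ref{cor.1}(i), the vanishing of $\mathrm{Pic}(U)$, and the localization isomorphism $H^2_{et}(U,\mu_{\ell^n}(1))\cong H^3_{\Delta}(\mathbb{A}^2,\mu_{\ell^n}(1))$ are all as in the paper. The observation that the normalization $\mathbb{A}^1\to\Delta$ is a universal homeomorphism is correct and is a cleaner way to identify $H^{*}_{et}(\Delta,-)$ than the paper's explicit parametrization.

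The final paragraph, however, has a gap. Knowing $H^{>0}_{et}(\Delta,\mathbb{Z}/\ell^n)=0$ and that $Ri^{!}\mu_{\ell^n}(1)$ lives in degrees $\ge 2$ with generic fiber $\mathbb{Z}/\ell^n[-2]$ is not enough to conclude $H^3_{\Delta}=0$: in the hypercohomology spectral sequence $E_2^{p,q}=H^p(\Delta,R^qi^{!}\mu_{\ell^n}(1))\Rightarrow H^{p+q}_{\Delta}$, the term $E_2^{0,3}$ is the stalk at the cusp of the skyscraper sheaf $R^3i^{!}\mu_{\ell^n}(1)$, and you have said nothing about it. Concretely this stalk is $H^2_{et}\bigl(\mathrm{Spec}\,\mathcal{O}^{sh}_{\mathbb{A}^2,0}\setminus\Delta,\;\mu_{\ell^n}\bigr)$, which is not obviously zero and is not touched by the topological invariance of $\Delta$ itself.

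The paper closes this differently: it stratifies $\{0\}\subset\Delta$, applies purity on each smooth stratum to obtain an exact sequence $0\to H^3_{\Delta}\to \mathbb{Z}/\ell^n\xrightarrow{\alpha}\mathbb{Z}/\ell^n\to H^4_{\Delta}$, and then shows $H^4_{\Delta}=0$ via $H^3_{et}(U,\mu_{\ell^n}(1))=0$ (which indeed follows at once from Artin's affine vanishing, since $\dim U=2$), forcing $\alpha$ to be an isomorphism. Your universal-homeomorphism remark slots in neatly at the step identifying $\Delta\setminus\{0\}\cong\mathbb{G}_m$, but it does not by itself replace this final injectivity argument.
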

\begin{proof}
 We observe from \cite[Proposition 28.6]{Ols} or \cite[Chapter IV section 4]{Hart77} that the stack ${\mathcal M}_{1,1} = [\rmY/{\mathbb G}_m]$, where $\rmY$ is the
 scheme $Spec \, k[g_2, g_3][1/\Delta] \subseteq {\mathbb A}^2_k$, where $\Delta = g_2^3-27g_3^2$. The action of ${\mathbb G}_m$
is given by $g_2 \mapsto u^4g_2, g_3 \mapsto u^6g_3$, $u \in {\mathbb G}_m$.
\vskip .1cm
Though $\rmY$ being open in ${\mathbb A}^2$ is rational, it is not projective and therefore it takes 
a bit of effort to show that $\Br(Y)_{\ell ^n}=0$ for any $\ell \ne char(k)$. Once that is done, Theorem ~\ref{Br.grp.quot.stacks} and 
Theorem ~\ref{nonequiv.to.equiv}(i) with $\rmG = {\mathbb G}_m$  proves the triviality of the $\ell^n$-torsion part of the 
Brauer group of the quotient stack $[\rmY/{\mathbb G}_m]$. 
\vskip .1cm
For the remainder of the proof,  we will let $x= g_2$, $y = g_3$ and $\tilde \Delta = Spec \, k[x, y]/(x^3-27y^2)$.
We begin with the commutative diagram of localization sequences:
 \[\xymatrix{\ar@<1ex>[r] &{\rmH^{2,1}_{M, \tilde \Delta}({\mathbb A}^2, {\mathbb Z}/\ell^n) } \ar@<1ex>[r] \ar@<1ex>[d] & {\rmH^{2, 1}_M({\mathbb A}^2,{\mathbb Z}/\ell^n)} \ar@<1ex>[r]  \ar@<1ex>[d]& {\rmH^{2, 1}_M({\mathbb A}^2-\tilde \Delta, {\mathbb Z}/\ell^n)} \ar@<1ex>[d] \ar@<1ex>[r]  &  {\rmH^{3,1}_{M, \tilde \Delta}({\mathbb A}^2, {\mathbb Z}/\ell^n)} \ar@<1ex>[d] \ar@<1ex>[r] &\\
  \ar@<1ex>[r] &{\rmH^{2}_{et, \tilde \Delta}({\mathbb A}^2, \mu_{\ell^n}(1))} \ar@<1ex>[r] & {\rmH^{2}_{et}({\mathbb A}^2, \mu_{\ell^n}(1))} \ar@<1ex>[r] & {\rmH^{2}_{et}({\mathbb A}^2- \tilde \Delta, \mu_{\ell^n}(1)) }\ar@<1ex>[r] & {\rmH^{3}_{et, \tilde \Delta}({\mathbb A}^2, \mu_{\ell^n}(1))}  \ar@<1ex>[r] &}
 \]
Using the identification $\rmH^{3,1}_{M, \tilde \Delta}({\mathbb A}^2, {\mathbb Z}/\ell^n) \cong CH^{1,-1}_{\tilde \Delta}({\mathbb A}^2, {\mathbb Z}/\ell^n)$, one sees that this term is trivial.
Moreover, one observes that 
\[\rmH^{2, 1}_M({\mathbb A}^2,{\mathbb Z}/\ell^n) \cong \rmH^{2}_{et}({\mathbb A}^2, \mu_{\ell^n}(1)) \cong 0.\]
In view of the commutative diagram above, therefore, now it suffices to show that $\rmH^{3}_{et, \tilde \Delta}({\mathbb A}^2, \mu_{\ell^n}(1))$ is trivial.
For this, we consider the long-exact sequence:
\begin{equation}
\label{long.exact.seq.1}
\xymatrix{ \ar@<1ex>[r] &{\rmH^{3}_{et, \{0\}}({\mathbb A}^2, \mu_{\ell^n}(1))} \ar@<1ex>[r] & {\rmH^{3}_{et, \tilde \Delta}({\mathbb A}^2, \mu_{\ell^n}(1))} \ar@<1ex>[r] & {\rmH^{3}_{et, \tilde \Delta-\{0\}}({\mathbb A}^2- \{0\}, \mu_{\ell^n}(1)) }} 
\end{equation}
\begin{equation}
\xymatrix{\ar@<1ex>[r]^(.2){\alpha}& {\rmH^{4}_{et, \{0\}}({\mathbb A}^2, \mu_{\ell^n}(1))}  \ar@<1ex>[r] & {\rmH^{4}_{et, \tilde \Delta}({\mathbb A}^2, \mu_{\ell^n}(1))} \ar@<1ex>[r] &}\notag
\end{equation}
Observe that curve corresponding to $\Delta$ has an isolated singularity at the origin, which can be resolved by taking
the normalization as follows. Observe that $\Delta$ corresponds to the plane curve with equation : $(x/3)^3= y^2$. Therefore, 
we substitute $(x/3)=t^2$ and $y=t^3$, so that $A= k[x,y]/((x^3-27y^2) \cong k[t^2, t^3]$ with function field $k(t)$. This is because
 $1/t= t^2/t^3 = (x/3)/y= x/(3y)$. But $k[t]$ is a unique factorization domain, so is already integrally closed. 
 Therefore, the integral closure of $A$ in $k(t)$ is $k[t]= k[3y/x]$, which corresponds to the affine line ${\mathbb A}^1$.
 This proves that the normalization of the curve $\tilde \Delta $ is the affine line ${\mathbb A}^1$ and the normalization maps 
 ${\mathbb A}^1-\{0\}$ isomorphically to the curve $\tilde \Delta -\{0\}$. Thus $\tilde \Delta -\{0\} \cong {\mathbb G}_m$ and 
 therefore,
 \[\rmH^{3}_{et, \tilde \Delta-\{0\}}({\mathbb A}^2- \{0\}, \mu_{\ell^n}(1)) = \rmH^{3}_{et, {\mathbb G}_m}({\mathbb A}^2- \{0\}, \mu_{\ell^n}(1)) \cong \rmH^{1}_{et}({\mathbb G}_m, \mu_{\ell^n}(0)) \cong {\mathbb Z}/\ell^n \mbox{ and }\]
 \[\rmH^{4}_{et, \{0\}}({\mathbb A}^2, \mu_{\ell^n}(1)) \cong {\mathbb Z}/\ell^n.\]
Observe that
 since the codimension of $\{0\}$ in ${\mathbb A}^2$ is $2$, $\rmH^{3}_{et, \{0\}}({\mathbb A}^2, \mu_{\ell^n}(1)) $ is trivial.
 Therefore, the long-exact sequence ~\eqref{long.exact.seq.1} will show that 
 \[\rmH^{3}_{et, \tilde \Delta}({\mathbb A}^2, \mu_{\ell^n}(1)) \cong 0,\]
{\it provided} that $\rmH^{4}_{et, \tilde \Delta}({\mathbb A}^2, \mu_{\ell^n}(1)) \cong 0$, so that the map denoted
 $\alpha$ in ~\eqref{long.exact.seq.1} is an isomorphism. For this, we consider the long-exact sequence:
 \begin{equation}
\label{long.exact.seq.2}
\xymatrix{ \ar@<1ex>[r] &{\rmH^{3}_{et}({\mathbb A}^2 - \tilde \Delta, \mu_{\ell^n}(1))} \ar@<1ex>[r] & {\rmH^{4}_{et, \tilde \Delta}({\mathbb A}^2, \mu_{\ell^n}(1))} \ar@<1ex>[r] & {\rmH^{4}_{et}({\mathbb A}^2, \mu_{\ell^n}(1)) } \ar@<1ex>[r] &} 
\end{equation}
Since $\rmH^{4}_{et}({\mathbb A}^2, \mu_{\ell^n}(1)) \cong 0$, it suffices to show $\rmH^{3}_{et}({\mathbb A}^2 - \tilde \Delta, \mu_{\ell^n}(1)) \cong 0$.
For this observe that 
\[\rmH^{3}_{et}({\mathbb A}^2 - \tilde \Delta, \mu_{\ell^n}(1)) \cong \rmH^{3}_{et}(({\mathbb A}^2 -\{0\})- (\tilde \Delta -\{0\}), \mu_{\ell^n}(1)) \cong \rmH^{3}_{et}({\mathbb A}^2 - {\mathbb A}^1, \mu_{\ell^n}(1)),\]
where the last isomorphism follows from the observation made earlier that the normalization of $\tilde \Delta$ is the affine line ${\mathbb A}^1$.
Finally, the long-exact sequence
\begin{equation}
\label{long.exact.seq.3}
\xymatrix{ \ar@<1ex>[r] &{\rmH^{3}_{et}({\mathbb A}^2, \mu_{\ell^n}(1))} \ar@<1ex>[r] & {\rmH^{3}_{et}({\mathbb A}^2 - {\mathbb A}^1, \mu_{\ell^n}(1))} \ar@<1ex>[r] & {\rmH^{4}_{et, {\mathbb A}^1}({\mathbb A}^2, \mu_{\ell^n}(1)) } \ar@<1ex>[r]&} 
\end{equation}
together with the isomorphism $\rmH^{4}_{et, {\mathbb A}^1}({\mathbb A}^2, \mu_{\ell^n}(1)) \cong \rmH^{2}_{et}({\mathbb A}^1, \mu_{\ell^n}(0)) \cong 0$
shows that $\rmH^{3}_{et}({\mathbb A}^2 - \tilde \Delta, \mu_{\ell^n}(1)) \cong \rmH^{3}_{et}({\mathbb A}^2 - {\mathbb A}^1, \mu_{\ell^n}(1)) \cong 0$.
This completes the proof of the theorem.
 \end{proof}
\begin{remark} Observe that our proof is much shorter and also works under the more general assumption that the base field
 is only separably closed than the proof in \cite{AM}. However, we require that the characteristic of the base field be different from both $2$ and $3$ so that it is
 possible to identify ${\mathcal M}_{1,1}$ with $[\rmY/{\mathbb G}_m]$. See also \cite{Shi}, who shows the Brauer group
 of ${\mathcal M}_{1,1}$ over an algebraically closed field of characteristic $2$, is ${\mathbb Z}/2$.
\end{remark}

\end{example}

\begin{example} Further examples of moduli stacks of principal bundles.
\normalfont
 \label{eg.2}
 Here we consider the following additional examples supplementing the discussion in Theorem ~\ref{main.thm.2}. We recall the
 somewhat conjectural formula for the (Voevodsky-)motive of the moduli stack ${\rm Bun}_{n,d}(\rmC)$ of rank $n$, degree $d$ vector bundles on a smooth projective curve $C$, with a $k$-rational point,
 as in \cite[Conjectures 1.3, 3.9]{HL}:
  \begin{equation}
 \label{Bun.1}
 \rmM({\rm Bun}_{n,d}(\rmC)) = M(Pic^d(\rmC)) \otimes \rmM(\rmB {\mathbb G}_m^{\rm gm}) \otimes \otimes_{i=1}^{n-1}Z(\rmC, {\rm Z}(i)[2i])
 \end{equation} 
 where $Z(\rmC, {\rm Z}(i)[2i]) = \oplus_{j=0}^{\infty} \rmM(\rmC^{(j)}) \otimes {\rm Z}(ij)[2ij]$ and 
 where $\otimes$ denotes the tensor product in the category of motives. (Recall this corresponds to the product of schemes.) 
 Here $\rmC^{(n)}$ denotes the
 $n$-fold symmetric power of the given curve. 
 When ${\mathcal L}$ is a fixed line bundle on the curve $\rmC$, ${\rm Bun}_{n,d}^{\mathcal L}$ will denote the moduli stack of rank $n$, degree $d$
 vector bundles on $\rmC$, with determinant isomorphic to ${\mathcal L}$. Then, it is shown in \cite{HL} that ~\eqref{Bun.1} specializes to
 \begin{equation}
 \label{Bun.L.1}
  \rmM({\rm Bun}_{n,d}^{\mathcal L}(\rmC)) = \rmM(\rmB {\mathbb G}_m^{\rm gm}) \otimes \otimes_{i=1}^{n-1}Z(\rmC, {\rm Z}(i)[2i]).
\end{equation}
\vskip .1cm
Similarly, the formula in ~\eqref{Bun.1} specializes to give the following formula for the moduli stack of principal ${\rm SL}_n$-bundles
 over $\rmC$:
 \begin{equation}
 \label{Bun.SLn}
  \rmM(Bun_{{\rm SL}_n}(\rmC)) =  \otimes_{i=1}^{n-1}Z(\rmC, {\rm Z}(i)[2i]).
\end{equation}
\begin{theorem}
 Assuming the above formulae for the motives of the above moduli stacks, and for a fixed prime $\ell \ne char(k)$, we obtain the following:
\begin{align}
  \Br(Bun_{n,d}(\rmC))_{\ell^n} &\cong \oplus_{j_1, \cdots, j_{n-1}=0}^{\infty}\Br(Pic^d(\rmC) \times \rmC^{(j_1)}\times \cdots \rmC^{(j_{n-1})})_{\ell^n},\\
  \Br(Bun_{n,d}^{\mathcal L}(\rmC))_{\ell ^n} &\cong \oplus_{j_1, \cdots j_{n-1}=0}^{\infty}\Br(\rmC^{(j_1)} \times \cdots \rmC^{(j_{n-1})})_{\ell^n},  \mbox{ and} \notag\\
  \Br(Bun_{{\rm SL}_n}(\rmC))_{\ell ^n} &\cong \oplus_{j_1, \cdots j_{n-1}=0}^{\infty}\Br(\rmC^{(j_1)} \times \cdots \rmC^{(j_{n-1})})_{\ell^n} \notag
 \end{align}

\end{theorem}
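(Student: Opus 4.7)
The plan is to mimic the proof of Theorem \ref{main.thm.2}, working one summand of the given motive formulae at a time. Since $\rmM(Bun_{n,d}(\rmC))$, $\rmM(Bun_{n,d}^{\mathcal L}(\rmC))$ and $\rmM(Bun_{{\rm SL}_n}(\rmC))$ decompose as (possibly infinite) direct sums indexed by tuples $(j_1,\dots,j_{n-1})\in \mathbb{N}^{n-1}$, both $\H^{2,1}_\rmM(-,\mathbb{Z}/\ell^n)$ and $\H^2_{et}(-,\mu_{\ell^n}(1))$ split accordingly, and the cycle map respects this splitting. A typical summand of $\rmM(Bun_{n,d}(\rmC))$ has the shape
\[
\rmM(\rmY_{(j)}) \otimes \rmM({\rm B}{\mathbb G}_m^{\rm gm}) \otimes \mathbb{Z}(w)[2w],
\]
where $\rmY_{(j)} = Pic^d(\rmC) \times \rmC^{(j_1)} \times \cdots \times \rmC^{(j_{n-1})}$ and $w = \sum_{i=1}^{n-1} i j_i$. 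The analogous summands for the other two stacks drop $\rmM(Pic^d(\rmC))$ (and also $\rmM({\rm B}{\mathbb G}_m^{\rm gm})$, for $Bun_{{\rm SL}_n}(\rmC)$).

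For each $(j)$ I would use the K\"unneth formula \eqref{kunneth} together with the fact that the cycle map for ${\rm B}{\mathbb G}_m^{\rm gm}$ is an isomorphism in all bidegrees $(2i,i)$ (exactly as exploited in Theorem \ref{main.thm.2}) to strip off the ${\rm B}{\mathbb G}_m^{\rm gm}$-factor from each summand: its contribution to $\bigoplus_i \H^{2i,i}_{\rmM}$ lies entirely in the image of the cycle map and so cannot affect the cokernel. What remains on the $(j)$-summand is the cycle map associated to $\rmM(\rmY_{(j)})\otimes \mathbb{Z}(w)[2w]$, which after the Tate shift identifies with the standard cycle map for $\rmY_{(j)}$ in the bidegree shifted by $(2w,w)$; its cokernel is $\Br(\rmY_{(j)})_{\ell^n}$. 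Assembling over $(j)$ gives the first formula, and verbatim the same argument with \eqref{Bun.L.1} and \eqref{Bun.SLn} in place of the first motive formula yields the remaining two.

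The main obstacle I anticipate is the bookkeeping for the Tate twists $\mathbb{Z}(w)[2w]$: one must verify that the $(j)$-summand's cokernel really equals $\Br(\rmY_{(j)})_{\ell^n}$, and not some twisted variant. Since $k$ is separably closed, the Tate twist $\mu_{\ell^n}(1-w)$ on the \'etale side is non-canonically isomorphic to $\mathbb{Z}/\ell^n$, and the Beilinson--Lichtenbaum isomorphism handles the motivic comparison in the relevant shifted bidegrees, so that a Kummer-sequence argument on each $\rmY_{(j)}$ reproduces the Brauer group exactly in the shape claimed. This is the precise mechanism by which the ${\rm B}{\mathbb G}_m^{\rm gm}$ factor dropped out of $\Br({\rm Bun}_{1,d}(\rmX))_{\ell^n}$ in the proof of Theorem \ref{main.thm.2}, and the same mechanism is what makes the full infinite direct sum assemble cleanly here.
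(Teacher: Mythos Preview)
Your proposal is correct and follows essentially the same approach as the paper: both use the K\"unneth formula for ${\rm B}{\mathbb G}_m^{\rm gm}\times \rmY$ together with the fact that the cycle map is an isomorphism for ${\rm B}{\mathbb G}_m^{\rm gm}$ to strip off that factor, and both observe that the Tate twists ${\mathbb Z}(w)[2w]$ correspond to a point and hence drop out of the Brauer group computation. Your treatment of the Tate-twist bookkeeping is actually more explicit than the paper's, which simply remarks that the factor ${\rm Z}(ij)[2ij]$ ``corresponds to the motive of a point shifted and Tate-twisted'' and therefore drops out.
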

\begin{proof} The observation that $\rmB{\mathbb G}_m^{\rm gm} = \colimn {\mathbb P}^n$, and the fact that each ${\mathbb P}^n$ is a
 projective smooth linear scheme, shows that the Kunneth formula holds for the (usual) Chow groups of ${\rm B}{\mathbb G}_m^{\rm gm} \times \rmY$ for
 any smooth scheme $\rmY$: see \cite[Theorem 4.5, Corollary 4.6]{J01}. The corresponding statement also holds for \'etale cohomology. Moreover, the
 cycle map is an isomorphism for $\rmB{\mathbb G}_m^{\rm gm}$. Therefore, the $\rmB{\mathbb G}_m^{\rm gm}$ drops out of the Brauer groups. Similarly, 
 the factor ${\rm Z}(ij)[2ij]$ corresponds to the motive of a point shifted and Tate-twisted. Therefore, it also drops out of the Brauer groups
  resulting  in the
  formulae above.
\end{proof}
 \vskip .2cm
 \section{Brauer groups of GIT quotients: Proofs of Theorem ~\ref{main.thm.3} through ~\ref{Br.triv.2}}
 \vskip .2cm \noindent
{\bf Proof of Theorem ~\ref{main.thm.3}}
\vskip .1cm 
 First observe that the existence of the long-exact sequences ~\eqref{mot.exact.seq} and ~\eqref{et.exact.seq} is
 purely formal. Therefore, what needs to be shown is the surjectivity statements in ~\eqref{surj.ss}. The reason
 one often restricts to $\rmX$ projective (and smooth) is to ensure that various limits exits for actions of $1$-parameter subgroups.
 It is shown in \cite[Proposition 3.3, Theorem 3.4]{BJ12} that the required limits all exist in the case of quiver moduli.
 Therefore, with this observation the proof of the theorem discussed below, applies to both the cases, that is, where
 $\rmX$ is a smooth projective scheme and where $\rmX$ denotes the affine space of representations of a fixed quiver $\rmQ$ 
 with a fixed dimension vector ${\mathbf d}$.
 \vskip .1cm 
 We first observe the existence of the long exact sequences:
 \vskip .1cm 
 \begin{equation}
\label{mot.exact.seq.1}
\cdots \ra \H_{\rmG, \rmX- \rmU, \M}^{2, 1}(\rmX, Z/\ell^n) \ra \H_{\rmG, \M}^{2, 1}(X, Z/\ell^n) \ra 
 \rmH^{2, 1}_{\rmG, \M}( \rmU, Z/\ell^n) \ra  \H_{\rmG, \rmX- \rmU, \M}^{3, 1}(\rmX, Z/\ell^n) \ra \cdots,
 \end{equation} 
\begin{equation}
 \label{et.exact.seq.1}
\cdots \ra \H_{\rmG, \rmX- \rmU, et}^{2}(\rmX, \mu_{\ell^n}(1)) \ra \H_{\rmG, et}^{2}(X, \mu_{\ell^n}(1)) \ra 
 \rmH^{2}_{\rmG, et}( \rmU, \mu_{\ell^n}(1)) \ra \H_{\rmG, \rmX- \rmU, et}^{3}(\rmX, \mu_{\ell^n}(1)) \ra \cdots.
 \end{equation}
 Under the identification of motivic cohomology with the higher Chow groups, the first long exact sequence corresponds to the
 following long exact sequence:
 \begin{equation}
\label{mot.exact.seq.1.1}
\cdots \ra {\rm CH}^{1-c}_{\rmG}(\rmX- \rmU, 0, Z/\ell^n) \ra {\rm CH}^1_{\rmG}(X, 0, Z/\ell^n) \ra {\rm CH}^1_{\rmG}( \rmU, 0, Z/\ell^n) \ra  0,
 \end{equation} 
\vskip .1cm \noindent
where $c$ denotes the codimension of $\rmX - \rmU$ in $\rmX$, with
\[\H_{\rmG, \M}^{2, 1}(X, Z/\ell^n) \cong {\rm CH}^1_{\rmG}(X, 0, Z/\ell^n) \mbox { and } \rmH^{2,1}_{\rmG, \M}( \rmU, {\mathbb Z}/{\ell^n}) \cong {\rm CH}^1_{\rmG}( \rmU, 0, Z/\ell^n).\]
 Therefore, the map
\[\H_{\rmG, \M}^{2, 1}(X, Z/\ell^n) \ra \rmH^{2, 1}_{\rmG, \M}( \rmU, Z/\ell^n)\]
is always surjective irrespective of the codimension of $\rmX - \rmU$ in $\rmX$.
\vskip .2cm
Next we consider the proof of the theorem under the hypothesis in (a). Now it suffices to prove that
\[\H_{\rmG, \rmX- \rmU, et}^{3}(\rmX, \mu_{\ell^n}(1)) \cong 0\]
in case $codim_{\rmX}(\rmX - \rmU) \ge 2$. This is proved in \cite[Lemma 6.1]{J20}. We will nevertheless sketch  a proof for the 
convenience of the reader.
 Since the base field is assumed to be perfect, one may find an  open sub-variety $\rmX_0$ of $\rmX$ so that
$\rmY_0 = (\rmX-{\rm U})\cap \rmX_0$ is smooth and nonempty. Now one has a long-exact sequence in \'etale cohomology:
\begin{equation}
\label{long.ex.seq}
\cdots \ra \rmH^{j}_{ \rmY_1, et}(\rmX, \mu_{\ell^n}(1)) \ra \rmH^{j}_{\rmY,et}(\rmX, \mu_{\ell^n}(1)) \ra \rmH^{j}_{\rmY_0, et}(\rmX_0, \mu_{\ell^n}(1)) \ra \rmH^{j+1}_{\rmY_1, et}(\rmX, \mu_{\ell^n}(1)) \ra \cdots,
\end{equation}
where $\rmY= \rmX-{\rm U}$, $\rmY_1 = Y - Y_0$. We may also assume without loss of generality that $\rmY$ is  irreducible. Then
\begin{align}
\rmH^{j}_{\rmY_0, et}(\rmX_0, \mu_{\ell^n}(1)) &= \rmH^{j-2codim _{\rmX_0}(\rmY_0)}_{et}(\rmY_0, \mu_{\ell^n}(0)), \mbox { if } codim_{\rmX_0}(\rmY_0) =2\\
						       &= 0, \mbox {otherwise,}\notag
\end{align}
by Poincar\'e duality in \'etale cohomology.  Since we are working with mod-$\mu_{\ell^n}$-coefficients, with $\ell \ne char(k)$, 
the above groups
are trivial for $j-2codim _{\rmX_0}(\rmY_0)<0$, in particular for $j=3$.
\vskip .1cm
 Since $\rmY_1$ is of dimension strictly less than the dimension of
$\rmY$, an ascending induction on the dimension of $\rmY$ enables one to assume $\rmH^{j}_{\rmY_1, et}(\rmX, \mu_{\ell^n}(1)) =0$ for all $j <2codim _{\rmX}(\rmY_1)$. (One may start the induction when
$dim (\rmY)=0$, since in that case $\rmY$ is smooth.) Since
$codim _{\rmX}(\rmY) < codim _{\rmX}(\rmY_1)$, the long exact sequence in ~\eqref{long.ex.seq} now proves $ \rmH^j_{\rmY, et}(\rmX, \mu_{\ell^n}(1)) =0$ for all $j < 2codim _{\rmX}(\rmY)$.
This completes the proof of the theorem, under the hypothesis (a).
 \vskip .2cm
Next we will consider the proof of the theorem, under the hypothesis (b). In case $codim_{\rmX}(\rmX - \rmX^{ss})\ge 2$, we are in 
the situation already considered under the hypothesis (a) discussed in (a). Therefore,
if the highest dimensional strata in $\rmX - \rmX^{ss}$ have codimension 2 or higher, the conclusion follows.
Let $\{S_{\beta_o}|\beta_o \in \B_o\}$ denote the strata in $\rmX- \rmX^{ss}$ of the highest dimension, which we may assume are all of codimension $1$.
In case there are any remaining strata in $\rmX$ that are still unaccounted for, they are contained in $\rmX- (\rmX^{ss} \cup \sqcup_{\beta_o} S_{\beta_o})$,
so that the codimension of $\rmX- (\rmX^{ss} \cup \sqcup_{\beta_o \in \B_o} S_{\beta_o})$ in $\rmX$ is at least 2: this shows that the
restriction map 
\[\rmH^{2}_{\rmG, et}(\rmX, \mu_{\ell^n}(1)) \ra \rmH^{2}_{\rmG, et}(\rmX^{ss}\cup \sqcup_{\beta_o \in \B_o} S_{\beta_o}, \mu_{\ell^n}(1))\]
is surjective. In this case, let  $\rmX^o= \rmX^{ss} \cup \sqcup_{\beta_o \in \B_o} S_{\beta_o}$.
 Therefore, it suffices to show that the restriction map
\begin{equation}
\label{surj}
\rmH^{2}_{\rmG, et}(\rmX^{ss}\cup \sqcup_{\beta_o in \B_o} S_{\beta_o}, \mu_{\ell^n}(1)) \ra \rmH^{2}_{\rmG, et}(\rmX^{ss}, \mu_{\ell^n}(1))
\end{equation}
\vskip .1cm \noindent
is a surjection. 
In view of the long-exact sequence ~\eqref{et.exact.seq.1}, now it suffices to show the map 
\[\rmH^{3}_{\rmG, \rmX^o-\rmX^{ss}, et}(\rmX^o, \mu_{\ell^{\nu}}(1)) \ra \rmH^{3}_{\rmG, et}(\rmX^o, \mu_{\ell^{\nu}}(1))\]
is injective. In view of the isomorphism
\[\rmH^{3}_{\rmG, \rmX^o -\rmX^{ss}, et}(\rmX^o, \mu_{\ell^n}(1)) \cong \oplus_{\beta_o}\rmH^{1}_{\rmG, et}(S_{\beta_o}, \mu_{\ell^n}(0)),\]
now it suffices to show that the composite map
\[\rmH^{1}_{\rmG, et}(S_{\beta_o}, \mu_{\ell^n}(0)) \ra \rmH^{3}_{\rmG, et}(\rmX^o, \mu_{\ell^n}(1)) \ra \rmH^{3}_{\rmG, et}(S_{\beta_o}, \mu_{\ell^n}(1))\]
is injective, where the last map is the obvious restrictions to the stratum $\rmS_{\beta_o}$. Moreover,
the composite map is multiplication by the equivariant Euler class of the normal bundle to the imbedding of the 
stratum $S_{\beta_o}$ in $\rmX^o$.
\vskip .1cm
Now we recall from the introduction the following:
let $\rmY_{\beta_o}$ denote a locally closed subscheme of $\rmS_{\beta_o}$ so that it is stabilized
by a parabolic subgroup $\rmP_{\beta_o}$, with Levi factor $\rmL_{\beta_o}$. Moreover, then 
$\rmS_{\beta_o} \cong \rmG{\underset {\rmP_{\beta_o}} \times}Y_{\beta_o}^{ss}$, and there is a scheme $\rmZ_{\beta_o}$ with
an $\rmL_{\beta_o}$-action and an $\rmL_{\beta_o}$-equivariant Zariski-locally trivial surjection $\rmY_{\beta_o}^{ss} \ra \rmZ_{\beta_o}^{ss}$ whose 
fibers are affine spaces.  Moreover, $\rmZ _{\beta_o}$ is a smooth locally closed $\rmL_{\beta_o}$ -stable subscheme of $\rmX$, so that it is
a union of connected components of the fixed point scheme $\rmX^{\rmT_{\beta_o}}$, where $\rmT_{\beta_o}$ is a subtorus of $\rmG$ with
centralizer $\rmL_{\beta_o}$. 
(Here we need the assumption that the linearized action by $\rmG$ is {\it manageable} as in \cite[Theorem 4.7]{ADK}: recall that
this hypothesis always holds in characteristic $0$.)
\vskip .1cm
Then we also obtain the isomorphisms:
\begin{align}
 \rmH^{i}_{\rmG, et}(\rmS_{\beta_o}, \mu_{\ell^n}(j)) &\cong \rmH^{i}_{\rmG, et}(\rmG \times_{\rmP_{\beta_o}}\rmY_{\beta_o}^{ss}, \mu_{\ell^n}(j))\\
                                                                 &\cong \rmH^{i}_{\rmP_{\beta_o}, et}(\rmY^{ss}_{\beta_o}, \mu_{\ell^n}(j))\notag \\
                                                                 &\cong \rmH^{i}_{\rmL_{\beta_o}, et}(\rmZ^{ss}_{\beta_o}, \mu_{\ell^n}(j))\notag
\end{align}
By a criterion of Atiyah and Bott (see \cite[1.4]{AB83}),
it suffices to show that the equivariant Euler class of the normal bundle 
$\rmN_{\beta_o}$ to $\rmS_{\beta_o}$ in $\rmX$ is not a zero divisor in $\rmH^*_{\rmG, et}(\rmS_{\beta_o}, \mu_{\ell^n}(\bullet))$.
Under the above isomorphisms,  the equivariant Euler class of $\rmN_{\beta_o}$ identifies with that of
the restriction $\rmN_{\beta_o}\vert_{\rmZ_{\beta_o}^{ss}}$. But that restriction is
a quotient of the normal bundle $\rmN'_{\beta_o}$ to $\rmZ_{\beta_o}^{ss}$ in $\rmX$,
and the action of $\rmT_{\beta_o}$ on each fiber of $\rmN'_{\beta_o}$ has no non-zero fixed vector.
By Lemma ~\ref{lem:ab} below, it follows that the equivariant Euler class of $\rmN'_{\beta_o}$ is not 
a zero divisor in $\rmH^*_{\rmL_{\beta_o}}(\rmZ_{\beta_o}^{ss}, \mu_{\ell^n}(\bullet))$; thus, the same holds for the 
equivariant Euler class of $\rmN_{\beta_o}$. The condition on the order of the Weyl groups $\rmW_{\beta_o}$ is used
in showing that the equivariant cohomology with respect to $\rmL_{\beta_o}$ injects into the corresponding equivariant
cohomology with respect to a maximal torus in $\rmL_{\beta_o}$, so that the hypotheses of Lemma ~\ref{lem:ab} are satisfied.
\vskip .1cm
This completes the proof that the map in ~\eqref{surj} is a surjection and hence the proof of the theorem.
\qed
\vskip .1cm
\begin{lemma}
\label{lem:ab}
Let $\rmL$ be a linear algebraic group, $\rmZ$ an $\rmL$-variety, and $\rmN$ an $\rmL$-linearized vector
bundle on $\rmZ$. Assume that a subtorus $\rmT$ of $\rmL$ acts trivially on $\rmZ$ and fixes
no non-zero vectors in each fiber of $\rmN$. Then the equivariant Euler class of $\rmN$
is not a zero divisor in $\rmH^*_{\rmL, et}(\rmZ, \mu_{\ell^n}(\bullet))$ provided $|\rmW_{\rm L}|$ is prime to $\ell$,
where $\rmW_{\rm L}$ is the Weyl group associated to a maximal torus in $\rmL$.
\end{lemma}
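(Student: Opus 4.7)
The plan is to reduce, via the Becker--Gottlieb transfer argument already used in the proof of Theorem~\ref{nonequiv.to.equiv}, to the case where $\rmL$ is replaced by a maximal torus $\rmT_{\rmL}$ containing $\rmT$, and then to perform a $\rmT$-weight-space decomposition of $\rmN$ that exhibits $e_{\rmL}(\rmN)$ as a product of non-zero divisors in a K\"unneth decomposition of the coefficient ring.

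First I would choose a maximal torus $\rmT_{\rmL}$ of $\rmL$ containing $\rmT$. The hypothesis that $|\rmW_{\rmL}|$ is prime to $\ell$, combined with the motivic and \'etale Becker--Gottlieb transfer and the isomorphism \eqref{G.to.T}, shows that the restriction
\[\rmH^{*}_{\rmL, et}(\rmZ, \mu_{\ell^n}(\bullet)) \ra \rmH^{*}_{\rmT_{\rmL}, et}(\rmZ, \mu_{\ell^n}(\bullet))^{\rmW_{\rmL}}\]
is an isomorphism, hence injective. As multiplication by $e_{\rmL}(\rmN)$ commutes with restriction and becomes multiplication by $e_{\rmT_{\rmL}}(\rmN)$, it suffices to prove that $e_{\rmT_{\rmL}}(\rmN)$ is a non-zero divisor in the ambient ring $\rmH^{*}_{\rmT_{\rmL}, et}(\rmZ, \mu_{\ell^n}(\bullet))$.

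Next, choose an (isogeny) splitting $\rmT_{\rmL} \cong \rmT \times \rmT'$; any residual $\ell'$-multiplicity introduced by working up to isogeny is invisible to $\mu_{\ell^n}$-cohomology. Since $\rmT$ acts trivially on $\rmZ$, the geometric Borel construction splits as $\BT^{gm,m} \times (\EG^{gm,m}_{\rmT'}\times_{\rmT'}\rmZ)$, and since $\rmH^{*}(\BT^{gm,m}, \mu_{\ell^n}(\bullet))$ is a (truncated) polynomial algebra over ${\mathbb Z}/\ell^n$, the K\"unneth theorem gives
\[\rmH^{*}_{\rmT_{\rmL}, et}(\rmZ,\mu_{\ell^n}(\bullet)) \cong \rmH^{*}(\BT^{gm,m},\mu_{\ell^n}(\bullet)) \otimes_{{\mathbb Z}/\ell^n} \rmH^{*}_{\rmT', et}(\rmZ,\mu_{\ell^n}(\bullet)).\]
Decompose $\rmN = \oplus_{\chi} \rmN_{\chi}$ into $\rmT$-weight subbundles; the decomposition is preserved by the commuting factor $\rmT'$, so the multiplicativity of equivariant Euler classes gives $e_{\rmT_{\rmL}}(\rmN) = \prod_{\chi} e_{\rmT_{\rmL}}(\rmN_{\chi})$, and every character $\chi$ appearing is non-trivial by hypothesis. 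The $\rmT_{\rmL}$-equivariant splitting principle then writes
\[e_{\rmT_{\rmL}}(\rmN_{\chi}) = \prod_{i=1}^{r_{\chi}}\bigl(c_1(\chi) + \alpha_i\bigr) = c_1(\chi)^{r_{\chi}} + (\mbox{lower-degree terms in } c_1(\chi)),\]
with the $\alpha_i$ the $\rmT'$-equivariant Chern roots of the untwisted model of $\rmN_{\chi}$ and $r_{\chi} = \operatorname{rank}\rmN_{\chi}$. Provided each $c_1(\chi)$ is a non-zero divisor in $\rmH^{*}(\BT^{gm,m},\mu_{\ell^n})$, this element is $c_1(\chi)$-monic of positive degree in a polynomial extension, hence a non-zero divisor in the K\"unneth tensor product; a product of non-zero divisors in a commutative ring is again a non-zero divisor, which finishes the argument.

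The main obstacle is the final point: the non-triviality of $\chi$ as an abstract character only forces $c_1(\chi) \neq 0$ in $\rmH^{2}(\BT^{gm,m},\mu_{\ell^n}(1)) \cong X^{*}(\rmT) \otimes {\mathbb Z}/\ell^n$, which is strictly weaker than being a non-zero divisor; over ${\mathbb Q}$ the stronger statement is automatic in the Atiyah--Bott setting, but with $\mu_{\ell^n}$-coefficients one needs each weight $\chi$ of $\rmT$ on the fibers of $\rmN$ not to be divisible by $\ell$ in $X^{*}(\rmT)$. In the intended application to the Kirwan--Hesselink stratification where $\rmT = \rmT_{\beta_o}$ is the explicit $1$-parameter subgroup whose fixed locus is $\rmZ_{\beta_o}$, the weights of $\rmT_{\beta_o}$ on the normal bundle are explicit non-zero integers and the lemma must be applied with $\ell$ also assumed prime to them, so this primitivity hypothesis is implicit in the use of the lemma.
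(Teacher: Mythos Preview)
Your argument follows the paper's proof essentially verbatim: the paper too adapts \cite[13.4]{AB83}, chooses a maximal torus $\rmT_{\rmL}\supseteq\rmT$, uses the transfer (Proposition~\ref{transf.splitting}) to reduce to the case $\rmL=\rmT_{\rmL}$, splits $\rmT_{\rmL}\cong\rmT\times\rmT'$, uses the K\"unneth decomposition coming from the trivial $\rmT$-action on $\rmZ$, decomposes $\rmN=\bigoplus_{\chi}\rmN_{\chi}$ into $\rmT$-weight pieces, and finishes with the splitting-principle formula $c_d^{\rmL}(\rmN_{\chi})=\prod_i(\chi+\alpha_i)$. One small simplification: you do not need to work up to isogeny --- since $\rmT\hookrightarrow\rmT_{\rmL}$ induces a surjection of free abelian character lattices, a genuine direct-product splitting $\rmT_{\rmL}\cong\rmT\times\rmT'$ exists.

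The obstacle you raise in your final paragraph is a genuine one, and the paper's proof does not address it either: the paper simply asserts that $\prod_i(\chi+\alpha_i)$ ``is a non-zero divisor \ldots\ since $\chi\neq 0$''. As you observe, over ${\mathbb Z}/\ell^n$ the class $c_1(\chi)\in X^*(\rmT)\otimes{\mathbb Z}/\ell^n$ is a non-zero divisor in the polynomial ring $\rmH^*_{et}(\BT,\mu_{\ell^n}(\bullet))$ only when $\chi$ is not divisible by $\ell$ in $X^*(\rmT)$; if $\chi=\ell\chi'$ then $c_1(\chi)$ is annihilated by $\ell^{n-1}$. So you have reproduced the paper's argument and, in addition, correctly isolated a hypothesis (primitivity of the $\rmT$-weights on $\rmN$ modulo $\ell$) that both proofs implicitly require but neither states.
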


\begin{proof}
We adapt the argument of \cite[13.4]{AB83}. Choose a maximal torus $\rmT_{\rmL}$ of $\rmL$
containing $\rmT$. Then the natural map $\rmH^*_{\rm L, et}(\rmZ, \mu_{\ell^n}(\bullet)) \to \rmH^*_{\rmT_{\rmL}}(\rmZ,\mu_{\ell^n}(\bullet))$
is injective as shown in Proposition ~\ref{transf.splitting} below. 
Thus, we may replace $\rmL$ with $\rmT_{\rm L}$, and assume that $\rmL$ is a torus.
Now $\rmL \cong \rmT \times \rmT'$ for some subtorus $\rmT'$ of $\rmL$. Therefore,
$\rmH^*_{\rmL, et}(\rmZ, \mu_{\ell^n}(\bullet)) \cong \rmH^*_{et}(\BT, \mu_{\ell^n}(\bullet)) \otimes \rmH^*_{\rmT', et}(\rmZ,  \mu_{\ell^n}(\bullet))$,
since $\rmT$ fixes $\rmZ$ point-wise. Moreover, $\rmN$ decomposes as a direct sum of 
$\rmL$-linearized vector bundles $\rmN_{\chi}$ on which $\rmT$ acts via a non-zero character 
$\chi$. Thus, we may further assume that $\rmN = \rmN_{\chi}$. Then the equivariant 
Euler class of $\rmN$ satisfies $c_d^L(N) =  \prod_{i=1}^d (\chi + \alpha_i)$,
where $d$ denotes the rank of $\rmN$, and $\alpha_i$ its $\rmT'$-equivariant
Chern roots. This is a non-zero divisor in 
$\rmH^*(\BT, \mu_{\ell^n}(\bullet)) \otimes \rmH^*_{\rmT', et}(\rmZ,\mu_{\ell^n}(\bullet ))$ since $\chi \neq 0$.
\end{proof}

\begin{proposition}
 \label{transf.splitting}
 Let $\rmL$ denote a linear algebraic group with $\rmT$ denoting a maximal torus in $\rmL$ and with $\rmW$ denoting
 the Weyl group of $\rmL$.  Let $\ell$ denote a fixed prime different from $char(k)$. Let $\rmY$ denote a smooth scheme
 over $k$ provided with an action by $\rmL$. Then, 
 if $|\rmW|$ is prime to $\ell$,
 the restriction map 
 \[\rmH^{*}_{\rmL, et}(\rmY, \mu_{\ell^n}(\bullet)) \ra \rmH^{*}_{\rmT, et}(\rmY, \mu_{\ell^n}(\bullet))\]
is a split monomorphisms. 
\end{proposition}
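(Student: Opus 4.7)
The plan is to construct an explicit splitting via the \'etale Becker-Gottlieb transfer of Carlsson and Joshua (\cite{CJ20}), which has already been invoked in the proof of Theorem \ref{nonequiv.to.equiv}. The key observation is that, since $\rmT \subseteq \rmL$, the smooth scheme $\rmU_m$ appearing in an admissible gadget for $\rmL$ (as in Definition \ref{defn:Adm-Gad}) carries a free $\rmT$-action whose quotient $\rmU_m/\rmT$ provides a geometric classifying space for $\rmT$, good in degrees $\le m$. Consequently, the natural map
\[
\pi_m : \rmU_m \times_{\rmT} \rmY \;\longrightarrow\; \rmU_m \times_{\rmL} \rmY
\]
is Zariski-locally trivial with fiber the homogeneous space $\rmL/\rmT$, and for $m \gg 0$ the induced map $\pi_m^*$ on \'etale cohomology with $\mu_{\ell^n}(\bullet)$-coefficients is naturally identified with the restriction map $\rmH^*_{\rmL, et}(\rmY, \mu_{\ell^n}(\bullet)) \to \rmH^*_{\rmT, et}(\rmY, \mu_{\ell^n}(\bullet))$ in the statement.

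Next I would apply the \'etale Becker-Gottlieb transfer of \cite{CJ20} to the map $\pi_m$ to obtain a map of the form
\[
\tau_m : \rmH^*_{\rmT, et}(\rmY, \mu_{\ell^n}(\bullet)) \;\longrightarrow\; \rmH^*_{\rmL, et}(\rmY, \mu_{\ell^n}(\bullet))
\]
satisfying the fundamental transfer identity $\tau_m \circ \pi_m^* = \chi(\rmL/\rmT) \cdot \id$, where $\chi(\rmL/\rmT)$ is the \'etale Euler characteristic of $\rmL/\rmT$. I would then compute $\chi(\rmL/\rmT) = |\rmW|$: modulo the unipotent radical of $\rmL$ (which is an iterated affine-space fibration and hence contributes trivially to Euler characteristic), one reduces to the reductive case, and there the Bruhat decomposition paves $\rmL/\rmB$ by Schubert cells indexed by $\rmW$, while $\rmB/\rmT$ is an affine space, yielding $\chi(\rmL/\rmT) = |\rmW|$.

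Combining these, $\tau_m \circ \pi_m^* = |\rmW|\cdot \id$ on $\rmH^*_{\rmL, et}(\rmY, \mu_{\ell^n}(\bullet))$. Since $|\rmW|$ is prime to $\ell$ by hypothesis, it is a unit in $\Z/\ell^n$, so $|\rmW|^{-1}\tau_m$ is a one-sided inverse to $\pi_m^*$; this is the required splitting and shows $\pi_m^*$ is a monomorphism.

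The main obstacle is confirming that the transfer of \cite{CJ20} is available in precisely this equivariant \'etale setting and that the composition identity $\tau_m \circ \pi_m^* = \chi(\rmL/\rmT)\cdot \id$ holds with the claimed multiplicity. Because the construction of \emph{loc. cit.} is carried out on the \'etale site using exactly the geometric Borel construction $\{\rmU_m|m\ge 0\}$, and because the transfer is compatible with Zariski-locally trivial fibrations between smooth schemes, this is precisely the ingredient needed; note in particular that we do \emph{not} need to assume $\rmL$ is special here, since we remain on the \'etale site throughout.
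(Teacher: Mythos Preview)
Your proposal is correct and follows essentially the same route as the paper: both arguments apply the \'etale Becker--Gottlieb transfer of \cite{CJ20} to the $\rmL/\rmT$-fibration arising from the geometric Borel construction, and both rely on $\chi(\rmL/\rmT)=|\rmW|$ being a unit in $\Z/\ell^n$. The paper's version is marginally slicker in one respect: it explicitly records the $\rmL$-equivariant isomorphism $\rmL \times_{\rmT} \rmY \cong (\rmL/\rmT) \times \rmY$ (the usual ``shearing'' trick, valid because $\rmL$ already acts on $\rmY$), which turns your fiber bundle $\pi_m$ into a genuine product after passing to $\rmE\rmL^{gm,m}\times_{\rmL}(-)$; the transfer then reduces to that of the structure map $\rmL/\rmT \to \oSpec\, k$, bypassing any need to discuss local triviality of $\pi_m$.

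On that point, one small inaccuracy: your assertion that $\pi_m$ is \emph{Zariski}-locally trivial is not justified unless $\rmL$ is special, and you correctly note later that this is not assumed. You should either replace ``Zariski'' by ``\'etale'' (which suffices for the transfer on the \'etale site), or insert the product identification above, which makes the fibration trivially Zariski-locally trivial. Either fix is cosmetic; the substance of your argument is sound.
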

\begin{proof} Though this is discussed in  \cite{CJ20}, we will briefly recall the proof here.
 We obtain the identifications:
\[\rmH^{*}_{\rmL, et}(\rmY, \mu_{\ell^n}(\bullet))  \cong \rmH^{*}_{et}({\rm E}{\rmL }^{gm,m}\times_{ \rmL}{\rmY}, \mu_{\ell^n}(\bullet)) \mbox{ and }\]
\[\rmH^{*}_{\rmT, et}(\rmY, \mu_{\ell^n}(\bullet))  \cong \rmH^{*}_{et}({\rm E}{ \rmL}^{gm,m}\times_{ \rmL}( \rmL \times_{\rmT}\rmY), \mu_{\ell^n}(\bullet)) \cong \rmH^{*}_{et}({\rm E}{\rmL}^{gm,m}\times_{ \rmL}( \rmL/\rmT \times \rmY), \mu_{\ell^n}(\bullet)).\]
The last identification comes from the fact that since $\rmL$ acts on $\rmY$, there is a natural map $\rmL \times_{\rmT} \rmY \ra \rmL/\rmT \times \rmY$ that is
an isomorphism compatible with action by $\rmL$. Therefore, the transfer for $\rmL / \rmT$ now provides the required
splitting to the map induced by the projection of $\rmL/\rmT \ra Spec \, k$. This proves the proposition.
\end{proof}

\vskip .2cm \noindent
{\bf Proof of Theorem ~\ref{cor.2}.}
A key point is to observe the commutative square:
 \begin{equation}
\label{surj.ss.1}
\xymatrix{{\H_{\rmG, \M}^{2, 1}(X, Z/\ell^n)} \ar@<1ex>[r] \ar@<1ex>[d]^{cycl}& {\rmH^{2, 1}_{\rmG, \M}( \rmX^{ss}, Z/\ell^n)} \ar@<1ex>[d]^{cycl}\\
{\H_{\rmG, et}^{2}(X, \mu_{\ell^n}(1))} \ar@<1ex>[r]& {\rmH^{2}_{\rmG, et}( \rmX^{ss}, \mu_{\ell^n}(1))}.}
\end{equation}
Under the assumptions of the Theorem, both the horizontal maps are surjections as shown by Theorem ~\ref{main.thm.3}.
The left vertical map is an isomorphism as shown by Theorem ~\ref{nonequiv.to.equiv}(ii).
Now the commutativity of the above square shows the last vertical map is also a surjection, which proves the Theorem. 
\qed

\vskip .2cm \noindent
{\bf Proof of Theorem ~\ref{Br.triv.2}}.
\vskip .1cm
  The first observation is that, ${\rm Br}(\rmX//\rmG)_{\ell^n} = Br(\rmX^s/\rmG)_{\ell^n} \cong Br_{\rmG}(\rmX^s)_{\ell ^n}$ under the hypotheses
 of the corollary. Observe that the 
last isomorphism holds in view of the assumption that $\ell$ is prime to the orders of the stabilizer groups at all points
in $\rmX^{s}$. This then readily proves the theorem under the first hypothesis. 
\vskip .2cm
 Next we assume that the second hypothesis in the theorem holds. Now one needs to observe that for any finite degree approximation $\EG^{gm,m}$ to $\EG$, 
 ${\rm dim}(\EG^{gm,m}\times_{\rmG} (\rmX^{ss}- \rmX^s) = {\rm dim}(\BG^{gm,m}) + {\rm dim}(\rmX^{ss}- \rmX^s)$ while
 ${\rm dim}(\EG^{gm,m}\times_{\rmG} (\rmX^{ss}) = {\rm dim}(\BG^{gm,m}) + {\rm dim}(\rmX^{ss})$ so that 
 $codim_{\EG^{gm,m}\times_{\rmG} (\rmX^{ss})}(\EG^{gm,m}\times_{\rmG} (\rmX^{ss}-\rmX^s)) \ge 2$. Therefore, in 
 the long exact sequence
 \[\cdots \ra \rmH_{et, \rmG, \rmX^{ss}-\rmX^s}^2(\rmX^{ss}, \mu_{\ell^{\nu}}(1)) \ra \rmH_{et, \rmG}^2(\rmX^{ss}, \mu_{\ell^{\nu}}(1)) \ra \rmH_{et, \rmG}^2(\rmX^{s}, \mu_{\ell^{\nu}}(1)) \ra \rmH_{et, \rmG, \rmX^{ss}-\rmX^s}^3(\rmX^{ss}, \mu_{\ell^{\nu}}(1)) \ra \cdots\]
 the end terms are trivial. This provides the isomorphism:
  \begin{equation}
    \label{isom.ss.s}
 \rmH_{et, \rmG}^2(\rmX^{ss}, \mu_{\ell^{\nu}}(1)) {\overset {\cong} \ra} \rmH_{et, \rmG}^2(\rmX^{s}, \mu_{\ell^{\nu}}(1)).
 \end{equation} 
 Next, one considers the commutative diagram:
 \[\xymatrix{{0} \ar@<1ex>[r] & {\Pic(\EG^{gm,m}{\underset {\rmG} \times}{\rmX^{ss}})/\ell^n} \ar@<1ex>[d] \ar@<1ex>[r] & {\rmH^2_{et}(\EG^{gm,m}{\underset {\rmG} \times}{\rmX^{ss}}, \mu_{\ell^n}(1))} \ar@<1ex>[r] \ar@<1ex>[d] &  {\Br(\EG^{gm,m}{\underset {\rmG} \times}{\rmX^{ss}})_{\ell^n}} \ar@<1ex>[r]\ar@<1ex>[d] & {0} \\
             {0} \ar@<1ex>[r] & {\Pic(\EG^{gm,m}{\underset {\rmG} \times}{\rmX^{s}})/\ell^n} \ar@<1ex>[r] & {\rmH^2_{et}(\EG^{gm,m}{\underset {\rmG} \times}{\rmX^{s}}, \mu_{\ell^n}(1))} \ar@<1ex>[r] &  {\Br(\EG^{gm,m}{\underset {\rmG} \times}{\rmX^{s}})_{\ell^n} } \ar@<1ex>[r] & {0}.}
 \]
In view of the isomorphism in ~\eqref{isom.ss.s}, a five Lemma argument readily shows that the last vertical map is
also surjective. Therefore, it follows that the triviality of $\Br_{\rmG}(\rmX^{ss})_{\ell^n} = {\Br(\EG^{gm,m}{\underset {\rmG} \times}{\rmX^{ss}})_{\ell^n}}$ implies the triviality of 
$ {\Br(\EG^{gm,m}{\underset {\rmG} \times}{\rmX^{s}})_{\ell^n}} \cong  \Br_{\rmG}(\rmX^s)_{\ell^n} \cong \Br(\rmX//G)_{\ell^n} $, thereby completing the proof of (ii).  This completes the proof. \qed
 
\section{Examples}

\end{example}
\vskip .1cm \noindent
The next class of examples we consider will be the Brauer groups of various GIT-quotients. Throughout, we will assume the base field is algebraically closed.
\begin{example}
\label{prdct.grass}
	\normalfont
	The first example we consider, in this context is that of a product 
of Grassmannians:
$$
\rmX= \prod_{i=1}^m \Gr(r_i,n), \quad 
\cL = \boxtimes_{i=1}^m \cO_{\Gr(r_i,n)}(a_i)
$$
where $\Gr(r_i,n)$ denotes the Grassmannian of $r_i$-dimensional linear subspaces of 
projective $n$-space, and $\cO_{\Gr(r_i,n)}(a_i)$ denotes the $a_i$-th power of the 
line bundle associated with the Pl\"ucker embedding; here $\rmG =  \SL_{n+1}$ as in \cite[11.1]{Do03} and 
$r_1,\ldots,r_m < n$, $a_1,\ldots,a_m$ are positive integers, with $\rmG$  acting diagonally on $\rmX$. $\rmX^{ss} = \rmX^{s}$ for general values of 
$a_1,\ldots,a_m$, that is if $\Sigma_{i=1}^m a_i(r_i+1)$ and $n+1$ are relatively prime: see \cite[Section 11.1]{Do03}. The geometric quotient $\rmX/\!/\rmG$ 
is called the space of stable configurations; examples include moduli spaces
of $m$ ordered points in ${\mathbb P}^n$.
\vskip .1cm
One can readily see from \cite[Theorem 1.2]{J01} that the cycle map is an isomorphism for the product of Grassmannians,
so that Theorem ~\ref{nonequiv.to.equiv}(i) applies. One may conclude that $\Br_{\rmG}(\rmX^{ss})_{\ell^n} =0$ where $\ell$ is a sufficiently 
large prime satisfying the above hypothesis. Similarly, under the assumption that $\rmX^{ss} = \rmX^s$ and $\ell$ is prime
 to the orders of the stabilizers at points in $\rmX^s$, one also concludes that $\Br_{\rmG}(\rmX^s)_{\ell^n} \cong Br(X//\rmG)_{\ell^n} =0$.
\vskip .1cm
We continue to consider the various examples discussed in \cite[11.1]{Do03}, which will provide examples that
meet the hypotheses of Theorem ~\ref{Br.triv.2}. 
 \begin{enumerate}[\rm(i)]
  \item Take $n=2$, each $r_i=0$ and each $a_i =1$. In this case a point $(p_1, \cdots, p_m)$ is semi-stable if and only if
  no point is repeated more than $m/3$-times and no more than $2m/3$ points are on a line. In this case,
  $\rmX^{ss}=\rmX^s$ if $3$ does not divide $m$. (This is example 11.2 in \cite{Do03}.)
  \item Take $n=2$, $m=6$, each $r_i=0$ and each $a_i=1$. Then it is shown in the same example worked out in \cite{Do03} that
  $dim(\rmX^{ss}/\rmG) =4$, so that $dim(\rmX^{ss}) \ge 4+ dim(\rmG)$, and that $dim(\rmX^{ss}/\rmG - \rmX^s/\rmG)=1$,
  so that $dim(\rmX^{ss}-\rmX^s)\le 1+dim(\rmG)$. Thus in this case $codim_{\rmX^{ss}}(\rmX^{ss}- \rmX^s) \ge 2$ and
  $\rmX^s$ is non-empty.
  \item Take $n=3$, each $r_i=1$ and each $a_i=1$. Then we are considering sequences $(\ell_1, \cdots, \ell_m)$ of lines in 
  ${\mathbb P}^3$. The it is shown that $\rmX^s$ is empty if $m \le 4$. If $m=4$, it is shown that the dimension
  of $\rmX^{ss}$ is at least $2$. (In fact what is shown there is that the dimension of $\rmX^{ss}/\rmG \ge 2$, but this
  clearly implies that the dimension of $\rmX^{ss} $ is also at least $2$.)
  Therefore, in this case, $dim (\rmX^s)=0$ (as $\rmX^s$ is empty) and $dim(\rmX^{ss}- \rmX^s) = dim (\rmX^{ss}) \ge 2$.
 \end{enumerate}

 \end{example}
 \vskip .2cm

 \begin{example}
 \label{quiver.mod}
 	\normalfont
 The next example we consider is that of {\it Quiver moduli}. A {\it quiver} $\rmQ$ is a  finite directed graph, possibly with oriented cycles.
That is, $\rmQ$ is given by a finite set of vertices ${\rm I}$ 
(often also denoted $\rmQ_0$) and a finite set of arrows $\rmQ_1$. The arrows will be 
denoted by $\alpha:i\rightarrow j$. We will denote by 
$\bZ\bfI$ the free abelian group generated by $\rmI$; the basis consisting of elements 
of $\rmI$ will be denoted by $\bfI$. An element $\bfd \in \bZ\bfI$ will be written as 
$\bfd = \sum_{i\in I} d_i \, \bfi $. 

Let ${\rm Mod}(\bF \rmQ)$ denote the abelian category of finite-dimensional representations 
of $\rmQ$ over the finite field $\bF$ (or, equivalently, finite-dimensional representations 
of the path algebra $\bF \rmQ$). Its objects are thus given by tuples 
\begin{equation}
\label{quiver.rin}
\rmM=\big( (\rmM_i)_{i\in I},(\rmM_\alpha:\rmM_i\rightarrow \rmM_j)_{\alpha:i\rightarrow j} \big)
\end{equation} 

\medskip 

\noindent
of finite-dimensional $\bF$-vector spaces and $\bF$-linear maps between them. 

The {\it dimension vector} $\bfdim(\rmM) \in \bN \bfI$ is defined as 
$\bfdim(\rmM) = \sum_{i\in \rmI}\dim_{\bF }(\rmM_i) \, \bfi$. 
The {\it dimension} of $\rmM$ will be defined to be $\sum_{i \in I} \dim_{\bF }(\rmM_i)$, 
i.e. the sum of the dimensions of the $\bF$-vector spaces $\rmM_i$. This will be denoted 
$\dim(\rmM)$.

We denote by $\Hom_{\bF \rmQ}(\rmM, \rmN)$  the $\bF$-vector space of homomorphisms  
between two representations $\rmM, \rmN\in{\rm Mod}( \bF \rmQ )$.

We will fix a quiver $\rmQ$ and a dimension vector $\bfd = \sum_i d_i \, \bfi$,
and consider the affine space 
$$
\rmX=\rmR(\rmQ, \bfd) :=  \bigoplus_{\alpha:i\rightarrow j} \Hom_{\bF}(\bF^{d_i},\bF^{d_j}).
$$ 
Its points $\rmM=(\rmM_\alpha)_\alpha$ obviously parametrize representations of $\rmQ$ with 
dimension vector $\bfd$. (Strictly speaking only the $\bF$-rational points of $\rmX$ 
define such representations; in general, a point of $\rmX$ over a field extension
$k$ of $\bF$ will define only a representation of $\rmQ$ over $k$ with dimension
vector $\bfd$. We will however, ignore this issue for the most part.) 

The connected reductive algebraic group 
$$
\rmG(\rmQ, \bfd): = \prod_{i\in \rmI} \GL(d_i)
$$ 
acts on $\rmR(\rmQ, \bfd)$ via base change:
$$
\big( (g_i) \cdot (\rm M_\alpha) \big)_\alpha=(g_j \rmM_\alpha g_i^{-1})_{\alpha:i\rightarrow j}.
$$
By definition, the orbits $\rmG(\rmQ, \bfd)\cdot  M$ in 
$\rmR(\rmQ, \bfd)$ correspond bijectively to the isomorphism classes 
$[\rmM]$ of $\bF$-representations of $\rmQ$ of dimension vector $\bfd$. 
We will set for simplicity $\rmG := G(\rmQ,\bfd)$ and $\rmX := R(\rmQ,\bfd)$.
For any $\bar{\bF}$-rational point $\rmM$ of $\rmX$, the stabilizer
$\rmG_{\rmM} = \Aut_{\bar{\bF}Q}(M)$ is smooth and connected, since it is open in
the affine space $\End_{\bar{\bF}Q}(M)$. Also, note that the
subgroup of $\rmG$ consisting of tuples $(t \id_{d_i})_{i \in I}$, $t \in \bG_m$, 
is a central one-dimensional torus and acts trivially on $\rmX$; moreover, the 
quotient $PG(\rmQ,\bfd)$ by that subgroup acts faithfully.
So one may replace $\rmG$ henceforth by $PG(\rmQ, \bfd)$.

One may choose a linear function 
$\Theta:\bZ \bfI \rightarrow \bZ$ and associate to it a character 
$$
\chi_\Theta((g_i)_i) := \prod_{i\in I} \det(g_i)^{\Theta(\bfd)-\dim (\bfd)\cdot \Theta (\bfi)}
$$
of ${\rm PG}(\rmQ, \bfd)$. For convenience, we will call $\Theta$ itself a {\it character}. 
(This adjustment of $\Theta$ by a suitable multiple of the function 
$\dim : (d_i) \mapsto \sum_i d_i$ has the advantage that a fixed $\Theta$ can be used 
to formulate stability for arbitrary dimension vectors, and not only those with 
$\Theta(\bfd)=0$. However, this notation is a bit different from the one adopted in 
\cite{Kin94}.)

\medskip

Associated to each character $\Theta$, we define the {\it slope}  $\mu$. This is the function 
defined by $\mu(\bfd) = \frac{\Theta (\bfd)}{\dim (\bfd)}$.
With this framework, one may invoke the usual definitions of geometric invariant theory 
to define the semi-stable points and stable points. Observe that now
a point $x \in \rmR(\rmQ, \bfd)$ will be semi-stable (stable) precisely when there exists a 
$\rmG$-invariant global section of some positive power of the above line bundle that does not 
vanish at $x$ (when, in addition, the orbit of $x$ is closed in the semi-stable locus, 
and the stabilizer at $x$ is finite). Since all stabilizers are smooth and connected, 
the latter condition is equivalent to the stabilizer being trivial.

\medskip

The corresponding varieties of $\Theta$-semi-stable and stable points with respect to the line bundle 
${\rm L}_{\chi}$ will be denoted by
$$ 
\rmR(\rmQ, \bfd)^{ss} = \rmR(\rmQ, \bfd)^{\Theta-ss}=  \rmR(\rmQ, \bfd)^{\Theta-ss}
$$ 
and
$$ 
\rmR(\rmQ, \bfd)^{s} =  \rmR(\rmQ, \bfd)^{\Theta-s} =  \rmR(\rmQ, \bfd)^{\Theta-s}.
$$ 
These are open subvarieties of $\rmX$, possibly empty.
The corresponding quotient varieties will be denoted as follows:
$$
\rmM^{\Theta-s}(\rmQ, \bfd) = \rmR(\rmQ, \bfd)^{\Theta-s}/\rmG \mbox{ and } 
\rmM^{\Theta-ss}(\rmQ, \bfd) = \rmR(\rmQ, \bfd)^{\rm \Theta-ss}/\!/\rmG = \rmX/\!/\rmG.
$$ 
Observe that 
 the variety $\rmM^{\rm \Theta-s}(\rmQ, \bfd)$ parametrizes isomorphism classes 
of $\Theta$-stable representations of $\rmQ$ with dimension vector $\bfd$.
\vskip .2cm
\begin{proposition}
When every $\Theta$-semi-stable point is $\Theta$-stable, or if
\begin{equation}
\label{amply.stable}
codim_{\rmR(\rmQ, \bfd)^{\rm \Theta-ss}}(\rmR(\rmQ, \bfd)^{\rm \Theta-ss} - \rmR(\rmQ, \bfd)^{\Theta-s}) \ge 2,
\end{equation}
$\Br(\rmM^{\Theta-s}(\rmQ, \bfd))_{\ell^n}=0$ for $\ell$ sufficiently large.
In particular, the above conclusion holds if $gcd(\{d_i|i\}) =1$.
\end{proposition}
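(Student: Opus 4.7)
The plan is to deduce this proposition from Theorem~\ref{Br.triv.2}, whose hypotheses are in turn supplied by Theorem~\ref{cor.2} applied in the quiver case (case~(ii) of the hypothesis preceding Theorem~\ref{main.thm.3}). First I will verify the assumptions of Theorem~\ref{cor.2} for $\rmX = \rmR(\rmQ, \bfd)$ and $\rmG = \prod_{i\in I} \GL(d_i)$. Since $\rmX$ is an affine space, its motivic and \'etale cohomology with $\mathbb{Z}/\ell^n$ coefficients vanish above degree $0$, so the cycle map for $\rmX$ is trivially an isomorphism and $\Br(\rmX)_{\ell^n} = 0$. The group $\rmG$ is a product of general linear groups, hence special in the sense of Grothendieck. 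Its Weyl group $\rmW = \prod_i S_{d_i}$ has order $\prod_i d_i!$, which is prime to $\ell$ as soon as $\ell > \max_i d_i$. The Harder--Narasimhan/Kirwan-type stratification for quiver representations invoked by hypothesis~(b) of Theorem~\ref{main.thm.3} is constructed in \cite{BJ12}, and the Weyl groups $\rmW_{\beta_o}$ attached to the Levi subgroups of the associated parabolics sit inside $\rmW$, so the same bound on $\ell$ suffices. Consequently, Theorem~\ref{cor.2} yields $\Br_{\rmG}(\rmR(\rmQ,\bfd)^{\Theta-ss})_{\ell^n} = 0$ for all sufficiently large $\ell$.

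Next I will apply Theorem~\ref{Br.triv.2} to pass from the equivariant Brauer group on the semi-stable locus to the Brauer group of the GIT quotient. The first alternative hypothesis of that theorem, $\rmX^{ss} = \rmX^s$, is literally the first case of our statement, while the second, $codim_{\rmX}(\rmX^{ss}-\rmX^s) \ge 2$, is the displayed condition~\eqref{amply.stable}. Since the effective action on $\rmR(\rmQ,\bfd)$ is by $PG(\rmQ,\bfd) = \rmG/\bG_m$ and, as recalled in the setup, the stabilizers of $PG(\rmQ,\bfd)$ at $\Theta$-stable points are trivial, the proviso on $\ell$ being prime to the stabilizer orders is automatic. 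This gives $\Br(\rmM^{\Theta-s}(\rmQ,\bfd))_{\ell^n} = 0$ for $\ell$ sufficiently large.

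For the final ``in particular'' clause, I will invoke the classical fact that when $\gcd(\{d_i\}_i) = 1$ the dimension vector $\bfd$ is primitive, so for a generic $\Theta$ no proper sub-dimension vector $\bfd' \ne \bfd$ has slope equal to $\mu(\bfd)$. Hence every $\Theta$-semi-stable representation is automatically $\Theta$-stable, placing us in the first case of the hypothesis. The main subtle point in the plan is the bookkeeping between $\rmG = \prod_i \GL(d_i)$ (which is special, and hence the group to which Theorems~\ref{cor.2} and~\ref{main.thm.3} apply) and the effective group $PG(\rmQ,\bfd) = \rmG/\bG_m$ (which acts freely on $\rmX^s$ and produces the quotient $\rmM^{\Theta-s}(\rmQ,\bfd)$): one must justify the identification $\Br_{\rmG}(\rmX^s)_{\ell^n} \cong \Br(\rmX^s/PG)_{\ell^n}$. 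This follows because $[\rmX^s/\rmG] \to \rmX^s/PG$ is a $\bG_m$-gerbe, and the special group $\bG_m$ contributes trivially to the $\ell$-primary Brauer group.
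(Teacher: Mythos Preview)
Your proof follows essentially the same approach as the paper's, which simply observes that $\rmX = \rmR(\rmQ,\bfd)$ is an affine space and invokes Theorem~\ref{Br.triv.2}. You supply considerably more detail than the paper's two-line argument---verifying the hypotheses of Theorem~\ref{cor.2} explicitly, and flagging the passage from $\rmG = \prod_i \GL(d_i)$ (which is special) to the effectively acting $PG(\rmQ,\bfd)$ (which has trivial stabilizers on $\rmX^s$), a bookkeeping point the paper does not address.
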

\begin{proof}
In this case, observe that $\rmX$ is the affine space $\rmR(\rmQ, \bfd)$. Therefore, Corollary ~\ref{Br.triv.2} applies to 
prove that $\Br(M^{\Theta-s}(\rmQ, \bfd))_{\ell^n}=0$ for $\ell$ sufficiently large, when every $\Theta$-semi-stable point is $\Theta$-stable
or if the hypothesis in ~\eqref{amply.stable} holds. Observe
that if $gcd(\{d_i|i\}) =1$, then every $\Theta$-semi-stable point is $\Theta$-stable.
\end{proof}
\vskip .1cm
\begin{remark} Here is a comparison of our result above with the results of \cite[Theorem 4.2]{RS}.
Note that in \cite[Theorem 4.2]{RS}, they consider the hypothesis:
\begin{equation}
\label{amply.stable.RS}
codim_{\rmR(\rmQ, \bfd)^{\rm \Theta}}(\rmR(\rmQ, \bfd)^{\rm \Theta} - \rmR(\rmQ, \bfd)^{\Theta-s}) \ge 2.
\end{equation}
Then they say that {\it the dimension vector $\bd$ is {\it an amply stable dimension vector}.}
This hypothesis is clearly stronger than the hypothesis ~\eqref{amply.stable} when $\rmR(\rmQ, \bfd)^{\rm \Theta-ss}$ is non-empty.
For then, $\rmR(\rmQ, \bfd)^{\rm \Theta-ss}$  being an open subscheme of $\rmR(\rmQ, \bfd)^{\rm \Theta}$ has the same dimension as $\rmR(\rmQ, \bfd)^{\rm \Theta}$, and
$\rmR(\rmQ, \bfd)^{\rm \Theta-ss} - \rmR(\rmQ, \bfd)^{\Theta-s} \subseteq \rmR(\rmQ, \bfd)^{\rm \Theta} - \rmR(\rmQ, \bfd)^{\Theta-s}$.
It is shown in \cite[Theorem 4.2]{RS}, that $\Br(\rmM^{\Theta-s}(\rmQ, \bfd))$ is cyclic of order $gcd(\{d_i|i\}$
under the  assumption that the hypothesis in ~\eqref{amply.stable.RS} holds. 
 In particular, if 
 $gcd(\{d_i|i\})=1$, then $\rmR(\rmQ, \bfd)^{\rm \Theta-ss} = \rmR(\rmQ, \bfd)^{\Theta-s}$, but still \cite[Theorem 4.2]{RS}
 seems to require that the hypothesis ~\eqref{amply.stable.RS} holds, in order to conclude that $\Br(\rmM^{\Theta-s}(\rmQ, \bfd))=0$.
 Our result above shows that 
$\Br(\rmM^{\Theta-s}(\rmQ, \bfd))_{\ell^n}=0$ if either $gcd(\{d_i|i\})=1$ or the hypothesis ~\eqref{amply.stable} holds, and 
{\it without} assuming the stronger hypothesis in ~\eqref{amply.stable.RS} holds, provided $\ell$ is sufficiently large.
\end{remark}
\end{example}
\vskip .2cm
Next we consider the assumptions made in Theorems ~\ref{cor.2} and ~\ref{Br.triv.2}, {\it on the codimension of the unstable locus} 
$\rmX -\rmX^{ss}$ in $\rmX$
{\it and whether} $\rmX^{ss}=\rmX^{s}$. The first observation is
that there are numerous classical examples in GIT, where $\rmX^{ss}=\rmX^s$, that is, every semi-stable point is stable: in addition to the examples considered
in Example ~\eqref{prdct.grass}, a well-known
 example is that of {\it binary forms of odd degree}. (See \cite[p. 110]{New}.)
\vskip .2cm
Next we consider the {\it codimension of the unstable locus}. It is important to point out that, in general, 
this depends on the choice of the $\rmG$-linearizing line-bundle and varies along with the variation of GIT-quotients. 
The following example illustrates this well.
\begin{example} {\it Unstable loci in flag varieties}: see \cite{ST}.
\normalfont
 \label{unst.loci.flags}
 Let $\rmG$ denote a complex reductive or semi-simple group with $\rmB$ a Borel subgroup. Let $\hat \rmG$ denote a 
 semi-simple subgroup of $\rmG$ acting on the flag variety $\rmX= \rmG/\rmB$. Let $\Lambda$ denote the character lattice of
 a maximal torus $\rmT \subseteq \rmB$. Then the ample line bundles on the flag variety $\rmX$ are given by the set of 
 strictly dominant weights denoted $\Lambda ^{++}$. Observe that ${\rm Pic}(\rmX) = \Lambda$. The $\hat \rmG$-ample cone
 $C^{\hat \rmG}(\rmX)_{\mathbb R}$ in ${\rm Pic}(\rmX)_{\mathbb R}$ is given by the line bundles that admit non-constant invariants in their section rings.
 \vskip .1cm
 One then obtains
an explicit description of the associated unstable locus for a line bundle ${\mathcal L}$ in  
$C^{\hat \rmG}(\rmX)_{\mathbb R}$ as well as a combinatorial formula for its co-dimension. It is shown 
 that the codimension is equal to $1$ on the regular boundary of the cone $C^{\hat \rmG}(\rmX)_{\mathbb R}$, and grows 
 towards the interior in steps by $1$, in a way that
the line bundles with unstable locus of codimension q form a convex
polyhedral cone.
\end{example}

\vskip .1cm



\begin{thebibliography}{AAAAA}


\bibitem[AB83]{AB83}
M. F. Atiyah and R. Bott,
\emph{The Yang-Mills equations over Riemann surfaces},
Philos. Trans. Roy. Soc. London Ser. A \textbf{308} (1983), no. 1505, 523–-615. 

\bibitem[AuGo]{Aus} M. Auslander and O. Goldman, {\em The Brauer group of a commutative ring,} Trans. Amer. Math.
Soc. 97 (1960), 367--409.

\bibitem[ADK]{ADK} A. Asok, B. Doran and F. Kirwan, \emph{Yang-Mills Theory and Tamagawa Numbers: the fascination of unexpected links in mathematics},
 Bull. Lond. Math. Soc., 40 (2008), no. 4, 533-567.

 \bibitem[AM]{AM} B. Antieau and L. Meier, \emph{The Brauer group of the moduli stack of elliptic curves}, arXiv:1608.00851v3 [mathAG] 5 May 2020.
 
\bibitem[BD]{BD} K. Behrend and A. Dhillon, \emph{On the motivic class of the stack of bundles}, Adv. Math. 212 (2007), no. 2,
617–644.
 
 \bibitem[Bor]{Bor} A. Borel, \emph{Sous-groupes commutatifs et torsion des groupes de Lie compacts
connexes}, Tohoku Math. J. 13 (1961), 216–240.
 
\bibitem[BJ12]{BJ12} M. Brion and R. Joshua, \emph{Notions of purity and the cohomology of quiver moduli}, International Journal of Mathematics, 23 (2012), no. 9, 30 pages.

\bibitem[Cao]{Cao} Y. Cao, \emph{Sous-groupe de Brauer invariant et obstruction de descent it\'er\'ee}, arXiv:1704.05425v5 [math.AG] 19 Mar 2020.

\bibitem[Ch]{Ch} S\'eminaire C. Chevalley, 2e ann\'ee.{\it Anneaux de Chow et applications},  Paris: Secr\'etariat
math\'ematique (1958).


\bibitem[CJ19]{CJ19} G. Carlsson and R. Joshua, \emph{Equivariant Algebraic K-Theory, G-Theory and Derived Completion}, Preprint, (2019), arXiv:1906.06827v2 [math.AG] 27 Oct 2019.

\bibitem[CJ20]{CJ20} G. Carlsson and R. Joshua, \emph{Motivic and \'etale Spanier-Whitehead duality and the Becker-Gottlieb transfer}, Preprint, (2020), arXiv:2007.02247v2 [math.AG] 22 Aug 2020.

\bibitem[DM15]{DM15} A. Dhillon and M. Young, \emph{The motive of the classifying stack of the orthogonal group}, Michigan Math J, \textbf{65}, 189-197, (2015). 

\bibitem[Do03]{Do03}
I.~Dolgachev,
\emph{Lectures on Invariant Theory},
London Math. Soc. Lecture Note Series \textbf{296},
Cambridge Univ. Press, 2003.


\bibitem[Gr]{Gr} A. Grothendieck, \emph{Groupes de Brauer II}, in Dix Exposes sur la cohomologie des schemas, North Holland, (1968).

\bibitem[Gr58]{Gr58} A. Grothendieck, \emph{Torsion homologique et sections rationnelles}, Expos\'e 5 in
Anneaux de Chow et applications. S\'eminaire C. Chevalley 1958, Paris, (1958).

\bibitem[Hart77]{Hart77} R. Hartshorne, \emph{Algebraic Geometry}, Graduate Texts in Mathematics, \textbf{ 52}, Eighth printing, (1997).

\bibitem[Hess]{Hess} H. W. Hesslink, \emph{Uniform instability in reductive groups}, J. Reine Angew. Math., \textbf{304}, 74-96, (1978).

\bibitem[HL]{HL} V. Hoskins and S. Lehalleur, \emph{On the Voevodsky motive of the moduli stack of vector bundles on a curve}, 
arXiv:1711.11072v2 [Math.AG] 10 Oct 2019.

\bibitem[HW]{HW} C. Haesemeyer and C. Weibel, \textit{The Norm Residue Theorem in Motivic Cohomology},
Princeton University Press, 2018.

\bibitem[IJ20]{IJ20} J. Iyer and R. Joshua, \emph{Brauer groups of schemes associated to symmetric powers of smooth projective curves in arbitrary characteristics},
Journ. Pure and Appl. Algebra, \textbf{224}, (2020), 1009-1022.

\bibitem[Iver]{Iver} B. Iversen, \emph{Cohomology of Sheaves}, Undergraduate Texts in Mathematics, Springer, (1986).

\bibitem[J01]{J01} R. Joshua, \emph{Algebraic K-theory and Motivic Cohomology of Linear varieties}, Math. Proc. Cambridge. Phil. Soc, (2001) vol 130, no. 1, 37-60


\bibitem[J02]{J02} R. Joshua, \emph{Derived functors for maps of simplicial varieties}, Journal of Pure and Applied Algebra
\textbf{171} (2002), no. 2-3, 219-248.

\bibitem[J20]{J20} R. Joshua, \emph{Equivariant Derived categories for Toroidal Group Imbeddings}, Transformation Groups, (2020), in press, arXiv:2003.10109v1 [math.AG] 23 Mar 2020.

\bibitem[JP20]{JP20} R. ~Joshua and P. Pelaez, \textit{Additivity and double coset formulae for the motivic and \'etale Becker-Gottlieb transfer}, Preprint, (2020),
arXiv:2007.02249v2 [math.AG] 22 Aug 2020.


\bibitem[Kin94]{Kin94} 
A. D. King, 
\emph{Moduli of representations of finite dimensional algebras}, 
Quart. J. Math. Oxford, \textbf{(2)}, 
\textbf{45} (1994), 515--530.


\bibitem[Kir84]{Kir84} 
F. Kirwan, 
\emph{Cohomology of quotients in symplectic and  algebraic geometry}, 
Princeton Lecture Notes, Princeton, 1984.


\bibitem[MFK94]{MFK94} 
D. Mumford, J. Fogarty and F. Kirwan, 
\emph{Geometric Invariant Theory. Third Edition}, 
Ergeb. der Math., Springer-Verlag, Berlin-New York, 1994.

 
 
\bibitem[New]{New} P. Newstead, \emph{Introduction to moduli problems and orbit spaces}, Lecture Notes, Tata Institute, (1978).


\bibitem[Re03]{Re03} M. Reineke, 
\emph{The Harder-Narasimhan system in quantum groups 
and cohomology of quiver moduli}, 
Invent. Math. \textbf{152} (2003), 349--368.


\bibitem[Mi]{Mi} J. Milne, \emph{Etale Cohomology}, Princeton University Press, Princeton, (1981).


\bibitem[MV99]{MV99} F. Morel and V. Voevodsky, \emph{${\mathbb A}^1$-homotopy Theory of schemes}, Publ. IHES, \textbf{90}, (1999), 45-143.

\bibitem[Ols07]{Ols07} M. Olsson, \emph{Sheaves on Artin stacks}, J. reigne angew. Math., \textbf{603}, (2007), 55-112.

\bibitem[Ols]{Ols} M. Olsson, \emph{Course Notes for Math 274: Notes by Anton Geraschenko}, (2007), \\svn://sheafy.net/courses/stacks\_sp2007.

\bibitem[RS]{RS} M. Reineke and S. Schr\"oer, \emph{Brauer groups of quiver moduli}, Alg. Geometry, \textbf{4}, 4, (2017), 452-471.

\bibitem[Sc]{Sc} S. Schroer, {\em Topological methods for complex-analytic Brauer groups.} Topology 44 (2005), no. \textbf{5}, 875--894.
\bibitem[Scb]{Scb} R. L. E. Schwarzenberger, \emph{Jacobians and symmetric products}, Illinois Jour. Math. \textbf{7}, (1963),
257-268.

\bibitem[SGA2]{SGA2} A. Grothendieck, \emph{Cohomologie locale des faisceaux coherents et Theoremes de Lefschetz Locale et Globale}, (1962),
North-Holland, Amsterdam, Paris.

\bibitem[SGA4]{SGA4} M.Artin, A. Grothendieck, J-L. Verdier, P. Deligne et B. Saint-Donat, \emph{Th\'eorie des Topos et Cohomologie \'Etale des sch\'emas},
Lecture Notes in Math, \textbf{305}, Springer-Verlag, (1973).

\bibitem[Shi]{Shi} M. Shin, \emph{The Brauer group of the moduli stack of elliptic curves over algebraically closed fields of characteristic $2$}, 
JPAA, \textbf{223}, (2019), no. 5, 1966-1999.

\bibitem[ST]{ST} H. Sepp\"anen and V. Tsanov, \emph{Unstable Loci in flag varieties and variation of quotients},
arXiv:1607.04231v3 [math.RT] 11 Jan 2018.

\bibitem[Tot99]{Tot99} B. Totaro, \textit{ The Chow ring of a classifying space},
Algebraic K-theory (Seattle, WA, 1997), 249-281, Proc. Symposia in Pure Math,
{\bf 67}, AMS, Providence, (1999).


\bibitem[Tot05]{Tot05} B. Totaro, \emph{The torsion index of Spin groups}, Duke M. J, \textbf{129}, (2005), 249-290.

\bibitem[Vis]{Vis} A. Vistoli, \emph{On the Cohomology and Chow ring of the classifying space of ${\rm PGL}_p$},
J. reine angew. Math., \textbf{610}, (2007), 181-227.

\bibitem[Voev11]{Voev11} Vladimir Voevodsky: {\it On Motivic Cohomology with ${\mathbb Z}/\ell$-coefficients}, Ann. Math, \textbf{174}, (2011), 401-438.
 
 \bibitem[Wang]{Wang} J. Wang, \emph{Moduli stack of G-Bundles}, Undergraduate thesis (Harvard, 2011), math.arXiv:1104.4828v1 [math.AG] 26 Apr 2011.

\end{thebibliography}
\end{document}